\newcommand{\g}{\mathfrak{g}}
\newcommand{\hroot}{\tilde\alpha}
\newcommand{\js}{\mathsf{js}}
\newcommand{\JS}{\mathrm{JS}}
\newcommand{\ljs}{\mathsf{ljs}}
\newcommand{\LJS}{\mathrm{LJS}}
\newcommand{\RJS}{\mathrm{RJS}}
\newcommand{\PJS}{\mathrm{PJS}}
\newcommand{\Cat}{\mathrm{Cat}}
\newcommand{\T}{\mathcal{T}}
\renewcommand{\wp}{K}
\newcommand{\e}{\varepsilon}
\newcommand*\circled[2]{\node[shape=circle,draw,inner sep=3pt,fill=black, scale= 0.35] at #1 (char) {\textcolor{white}{#2}};}
\newcommand*\dashcircled[2]{\node[shape=circle,draw,inner sep=3pt, scale= 0.35] at #1 (char) {#2};} 
\DeclareMathOperator{\SYT}{SYT}
\DeclareMathOperator{\perm}{perm}
\tikzstyle{vertex}=[circle, draw, inner sep=0pt, minimum size=4pt]
\tikzstyle{vtx}=[circle, draw, inner sep=0pt, minimum size=8pt]
\definecolor{darkgreen}{cmyk}{.9,0,.9,.2}
\definecolor{midgray}{gray}{0.60}
\definecolor{lightgray}{gray}{0.90}
\definecolor{lmgray}{gray}{0.70}
\def\a{\alpha}
\newcommand\multiset[2]%
\newcommand{\ph}{\phantom{-}}
\def\a{\alpha}
\def\vol{\mathrm{vol}}
\newcommand{\bfa}{\mathbf{a}}
\newcommand{\bfb}{\mathbf{b}}
\newcommand{\bfj}{\mathbf{j}}
\newcommand{\bfs}{\mathbf{s}}
\newcommand{\bft}{\mathbf{t}}
\newcommand{\bfu}{\mathbf{u}}
\newcommand{\bfv}{\mathbf{v}}
\newtheorem*{rep@theorem}{\rep@title}
\newcommand{\newreptheorem}[2]{%
\newenvironment{rep#1}[1]{%
 \def\rep@title{#2 \ref{##1}}%
 \begin{rep@theorem}}%
 {\end{rep@theorem}}}
\newtheorem*{rep@conjecture}{\rep@title}
\newcommand{\newrepconjecture}[2]{%
\newenvironment{rep#1}[1]{%
 \def\rep@title{#2 \ref{##1}}%
 \begin{rep@conjecture}}%
 {\end{rep@conjecture}}}
\newcommand{\addresseshere}{%
  \enddoc@text\let\enddoc@text\relax
}
\newtheorem{theorem}{Theorem}[section]
\newtheorem{lemma}[theorem]{Lemma}
\newtheorem{proposition}[theorem]{Proposition}
\newtheorem{corollary}[theorem]{Corollary}
\newtheorem{question}{Open Question}[section]
\newtheorem{problem}[theorem]{Problem}
\theoremstyle{definition}
  \newtheorem{remark}[theorem]{Remark}
  \newtheorem{definition}[theorem]{Definition}
  \newtheorem{example}[theorem]{Example}
  \newtheorem*{C1}{Theorem~\ref{thm:unresA}}
\numberwithin{equation}{section}
\title{Kostant's partition function and\\ magic multiplex juggling sequences}
\author[Benedetti]{Carolina Benedetti}
\address[C.\ Benedetti]{Departamento de Matem\'aticas\\Universidad de los Andes\\Bogot\'a\\Colombia} 
\email{\textcolor{blue}{\href{mailto:c.benedetti@uniandes.edu.co}{c.benedetti@uniandes.edu.co}}}
\urladdr{\url{https://sites.google.com/site/carobenedettimath/home}}
\author[Hanusa]{Christopher R.\ H.\ Hanusa}
\address[C.\ R.\ H.\ Hanusa]{Department of Mathematics \\ Queens College (CUNY)\\ 
Flushing, NY 11367\\ United States}
\email{\textcolor{blue}{\href{mailto:chanusa@qc.cuny.edu}{chanusa@qc.cuny.edu}}}
\urladdr{\url{http://qc.edu/~chanusa/}}
\author[Harris]{Pamela E.\ Harris}
\address[P.\ E.\ Harris]{Department of Mathematics and Statistics\\ Williams College\\
Williamstown, MA 01267, United States} 
\email{\textcolor{blue}{\href{mailto:peh2@williams.edu}{peh2@williams.edu}}}
\urladdr{\url{https://www.pamelaeharris.com}}
\author[Morales]{Alejandro H.\ Morales}
\address[A.\ H.\ Morales]{Department of Mathematics and Statistics, University of Massachusetts, Amherst, MA, 01003, United States} 
\email{\textcolor{blue}{\href{mailto:ahmorales@math.umass.edu}{ahmorales@math.umass.edu}}}
\urladdr{\url{http://people.math.umass.edu/~ahmorales/}}
\author[Simpson]{Anthony Simpson}
\address[A.\ Simpson]{Department of Mathematics and Statistics\\ Williams College\\
Williamstown, MA 01267, United States} 
\email{\textcolor{blue}{\href{mailto:als7@williams.edu}{als7@williams.edu}}}
\keywords{Kostant's partition function, multiplex juggling sequence, magic juggling sequence, juggling, juggling polytope, juggling poset}
\date{\today}
\subjclass[2010]{Primary: 00A08, 05A15, 05A18, 17B22}  
\begin{document}

\begin{abstract}
Kostant's partition function is a vector partition function that counts the number of ways one can express a weight of a Lie algebra $\mathfrak{g}$ as a nonnegative integral linear combination of the positive roots of $\mathfrak{g}$. Multiplex juggling sequences are generalizations of juggling sequences that specify an initial and terminal configuration of balls and allow for multiple balls at any particular discrete height. Magic multiplex juggling sequences generalize further to include magic balls, which cancel with standard balls when they meet at the same height. 

In this paper, we establish a combinatorial equivalence between positive roots of a Lie algebra and throws during a juggling sequence.  This provides a juggling framework to calculate Kostant's partition functions, and a partition function framework to compute the number of juggling sequences. From this equivalence we provide a broad range of consequences and applications connecting this work to polytopes, posets, positroids, and weight multiplicities. 
\end{abstract}

\maketitle


\section{Introduction}

Juggling was formalized as a mathematical concept around 1980~\cite{Shannon}.  Juggling sequences keep track of the movement of a finite number of balls by recording their discrete heights at discrete time steps, under the condition that only one ball is caught and thrown at any time \cite{BEGW}. This imitates the continuous action of how a juggler catches and throws balls while juggling. 

Since the formalization of mathematical juggling, researchers have determined the number of distinct juggling sequences satisfying certain juggling parameters. Moreover, a number of extensions to juggling sequences have been made, including Butler and Graham's recent extension to multiplex juggling sequences \cite{BG}, in which the juggler can now hold up to $m$ balls in their hand (their hand capacity).  Some of these sequences were enumerated in \cite{EMJP}.  To learn more about the mathematics of juggling, we recommend the short history given by Varpanen \cite{Varpanen12ashort} or the more comprehensive summary by Polster \cite{PolsterBook}.

The study of juggling patterns has been shown to have connections to many areas of mathematics such as walks on graphs \cites{CG,BG}, probability and Markov processes \cites{LV,ABCN}, $q$-analogues \cites{E,ER1994,ELV}, positroid varieties \cite{KLS}, vector compositions \cite{Stadler}, and vector partition functions \cites{HIW,HIO}. It is this last connection that motivates our current work.

We recall that a weight $\mu$ of a Lie algebra $\mathfrak{g}$ is a linear functional on the dual of a Cartan subalgebra of $\mathfrak{g}$. More simply, a weight is a vector in $\mathbb{R}^n$, for some appropriate $n$ and with certain defining conditions on its entries dependent on the Lie algebra of interest. In this way, the set of positive roots of $\mathfrak{g}$ is a finite linearly dependent set of vectors in $\mathbb{R}^n$, which we denote by $\Phi^+$. As for weights, the linear dependence relation between the vectors in $\Phi^+$ depends on the Lie algebra involved; we make this precise in Section~\ref{sec:lietypes}. Then Kostant's partition function is a vector partition function that counts the number of ways to express a weight $\mu$ of a Lie algebra as a sum of its positive roots. We denote this count by $K(\mu)$.

We extend the definition of a multiplex juggling sequence to deal with the negative integers that arise as entries in the vectors defining the positive roots of a Lie algebra.  This leads to the concept of a magic multiplex juggling sequence that includes both the standard non-magic balls as well as new magic balls. In these juggling sequences, a magic ball and a non-magic ball located at the same height nullify each other and both disappear.  Since all of our results involve magic multiplex juggling sequences, throughout this manuscript, we drop the language ``magic multiplex'' and refer to them simply as juggling sequences.
We let the integer tuple $\langle a_1,a_2, \cdots, a_k\rangle$ describe the configuration of $\sum_{i=1}^ka_i$ balls in which $a_i$ balls are at height $i$ (and where they are magic balls if $a_i<0$).

The key insight setting the foundation for our results is that there is a combinatorial equivalence between the throwing of a ball during a juggling sequence at time $i$ to height $j$ and the positive root $\e_i-\e_{i+j}$ appearing in a partition of a weight of a Lie algebra of type $A_r$, where  $\{\e_1,\ldots,\e_{r+1}\}$ are the standard basis vectors in $\mathbb{R}^{r+1}$. Thinking of a juggling sequence as a multiset of throws and a vector partition as a multiset of positive roots establishes a correspondence between these collective objects, which we develop in Sections~\ref{sec:bijection} and \ref{sec:restrict}.

The correspondence leads to a bijection (Theorem~\ref{thm:unresAA}) between the partitions of a weight $\mu=\sum_{i=1}^{r+1}\mu_{i}\e_i$ of type $A_r$ (where each $\mu_{i}\in\mathbb{Z}$) and juggling sequences of length $r$ where balls start with the configuration $\langle \mu_{1},\ldots,\mu_{r} \rangle$ and end with all $\mu_{1}+\cdots+\mu_r$ balls at height one, the cardinality of which is denoted $\js(\langle\mu_1,\ldots,\mu_{r}\rangle, \langle \mu_{1}+\cdots+\mu_r \rangle, r)$.  This gives the following enumerative result for Kostant's partition function $\wp(\mu)$.

\begin{C1}
Let $\mu$ be a weight of the Lie algebra of type $A_r$ and let $\langle\mu_1,\ldots,\mu_{r+1}\rangle$ be its standard basis vector representation. Then
\[\wp(\mu) = \js(\langle\mu_1,\ldots,\mu_{r}\rangle, \langle \mu_{1}+\cdots+\mu_r \rangle, r).\]
\end{C1}

The same correspondence provides a bijection (Theorem~\ref{thm:jugtopart}) between a general set of juggling sequences with initial configuration of balls $\bfa$, a terminal configuration of balls $\bfb$ (with both $\bfa$ and $\bfb$ containing the same net number of balls), hand capacity $m$, and length $n$ and the set of partitions of a weight $\delta$ of type $A_n$ (specified by $\bfa$, $\bfb$, and $n$) with an explicit restriction on the positive roots allowed in the partition.

These results are just the tip of the mathematical iceberg.  The Lie algebra of type $A$ is one in a family of four classical simple Lie algebras.  Hence, one could ask how the bijections between juggling sequences and partitions of weights generalize to other Lie types.  In Section~\ref{sec:othertypes-juggling} we define a new variety of juggling that models partitions of weights in Lie algebras of type $B$, $C$, and $D$.  Then in Section~\ref{sec:sb} we provide a way to count these new juggling sequences as sums of standard (type $A$) juggling sequences, which makes use of a result of Schmidt and Bincer \cite{SB}.  See Corollary \ref{cor:applyingSB}.

One motivation for this paper was the value of Kostant's partition function on the highest root of the Lie algebras of types $B$ and $C$.  Harris, Insko, and Omar \cite{HIO} gave generating functions for these values, verifying the claim of Harris, Insko, and Williams \cite{HIW} that the generating functions for the value of Kostant's partition function on the highest root were the same as those for counting certain multiplex juggling sequences.  Although the generating functions agreed, neither set of authors gave combinatorial proofs for these results.  Section~\ref{sec:HIOsubsec} is devoted to applying our methods to establish these bijective proofs, which are stated in Theorems~\ref{thm:solution-problem 1} and \ref{thm:solution-problem 2}.

In Section~\ref{sec:connections}, we give further applications and connections from the correspondence between partitions of a weight of a Lie algebra of type $A$ and juggling sequences. Since the former can be seen as lattice points of flow polytopes  \cite{BB} or elements of a poset \cite{JO}, we define the equivalent juggling polytope (see Figure~\ref{fig:juggling polytope}) of  real-valued juggling sequences and juggling poset (see Figure~\ref{fig:juggling poset}). The connection with polytopes allows us to conclude polynomiality properties of the number of juggling sequences (Corollary~\ref{cor:polynomiality} and Corollary~\ref{cor:lidskii-juggling}). In addition, we translate results from Chung--Graham \cite{CG} and Butler--Graham \cite{BG} on juggling sequences to give permanent and determinant formulas for values of restricted Kostant's partition function on the highest root of the Lie algebra of type $A$ (Theorem~\ref{prop:juggling-perm-det}).

This manuscript is organized as follows.
In Section~\ref{sec:background}, we give precise mathematical definitions of juggling and of the Lie algebras of interest. Section~\ref{sec:bijection} provides the bijection between partitions of a weight of a Lie algebra of type $A$ and certain multiplex juggling sequences, while Section~\ref{sec:restrict} shows how to convert a multiplex juggling sequence into a partition of a weight of a Lie algebra of type $A$ with certain restrictions.  Section~\ref{sec:HIO} extends multiplex juggling sequences to other classical Lie types as described above.  Section~\ref{sec:connections} presents applications and connections to polytopes, posets, generating functions, positroids, and weight multiplicities, with open problems scattered throughout. We end the manuscript by providing a dictionary of notation for the benefit of the authors and the reader.

\section{Background}\label{sec:background}

\subsection{Juggling sequences}
We now present the mathematics of juggling using the definitions and notation presented by Butler and Graham \cite{BG}.  We consider time to be broken down into discrete steps.  When we juggle, we throw each ball into the air immediately upon catching it---the ball returns to our hand after a fixed amount of time that is determined by how high the ball was thrown. 
A {\em juggling state} records the position of the  $b\in\mathbb{N}$ indistinguishable balls at a given time. That is, a juggling state is a vector $\bfs = \langle s_1, \ldots, s_h \rangle$ of nonnegative integers that sum to $b$, where $s_i$ denotes the number of balls at height $i$ with $i=1,2,\ldots, h$. We say that the {\em height} $h$ of $\bfs$ is its largest non-zero index. 

A {\em multiplex juggling sequence} $S=(\bfs_0, \bfs_1, \ldots, \bfs_n )$ represents the act of juggling over time. Hence, two successive juggling states $\bfs_{i-1}=\langle s_1, \ldots, s_h \rangle$ and $\bfs_i$ must satisfy 
\begin{equation}
\label{eq:successive}
    \bfs_i=\langle s_2 + b_1, s_3 + b_2, \ldots, s_h + b_{h-1}, b_h, \ldots, b_{h'} \rangle,
\end{equation}
where the nonnegative integers $b_j$ satisfy $\sum_{j=1}^{h'} b_j = s_1$.  This defining condition represents accounting for the throwing of the $s_1$ balls that land in your hand at time $i$ to heights $j=1,\ldots,h'$ (where $b_j$ balls return to your hand at time $i+j$) and gravity (balls in the air are one time step closer to landing in your hand).  When a ball at height $1$ at time $i-1$ is thrown to height $j$ at time $i$, we call this {\em a throw at time $i$ to height $j$}, which we denote by $T_{i,j}$. Since we only consider multiplex juggling sequences we  drop the word multiplex.

A juggling sequence is said to be \emph{periodic} if its initial state is the same as its terminal state. 

A juggling sequence can also have a {\em hand capacity constraint}, which specifies the maximum number of balls at any height. Equivalently, any entry in a juggling state must be less than or equal to $m$, representing that we can only catch $m$ balls at any given time. When $m=1$ we recover standard juggling sequences. When $m$ is greater than or equal to the number of balls, the hand capacity parameter does not play a role and can be ignored.

We define $\JS(\bfa,\bfb,n,m)$ to be the set of all juggling sequences of length $n$, hand capacity $m$, initial state $\bfa$, and terminal state  $\bfb$, and we let $\js(\bfa,\bfb,n,m)$ denote the number of juggling sequences with these parameters. When there is no hand capacity (or if $m\geq b$), we write $\JS(\bfa,\bfb,n)$ and we let $\js(\bfa,\bfb,n)$ denote its cardinality. 

In physical juggling, the balls rise and fall in a parabolic motion, but it is easier to represent a juggling state $\bfs=\langle s_1, \ldots, s_h \rangle$ visually as a conveyor in which $s_1,s_2,\ldots,s_h$ balls are located in buckets at heights $1,2,\ldots,h$, respectively.  A juggling sequence is therefore a sequence of juggling states in which the balls at heights $2$ through $h$ descend by one unit height in successive time steps, while the balls at height $1$ are redistributed into buckets of some desired heights.  We illustrate the conveyor with the following example.

\begin{example}
Figure \ref{fig:multjugseq} shows the juggling sequence $(
\langle1, 1\rangle,
\langle2\rangle,
\langle0, 1, 1\rangle,
\langle1, 1\rangle
)$, which is a member of  $\JS(\langle 1, 1 \rangle, \langle 1, 1 \rangle, 3, 2)$.  Note that there are two times at which balls are thrown. At time $t=1$, a ball is thrown to height $1$ and at time $t=2$, the two balls are thrown to heights $2$ and $3$ respectively.

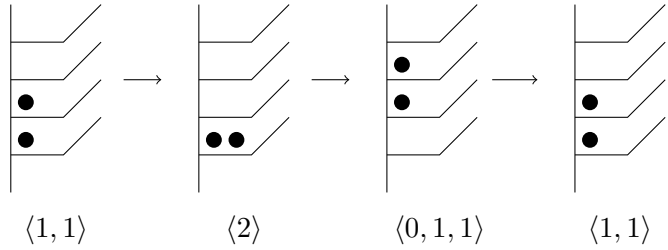
\begin{figure}[tbhp]
\centering
\begin{tikzpicture}
\draw (0,-1.5) -- (0,1.0);
\draw (0, -1.0) -- (0.7, -1.0) -- (1.2, -0.5);
\fill (0.2,-0.8) circle (3pt);
\draw (0, -0.5) -- (0.7, -0.5) -- (1.2, 0.0);
\fill (0.2,-0.3) circle (3pt);
\draw (0, 0.0) -- (0.7, 0.0) -- (1.2, 0.5);
\draw (0, 0.5) -- (0.7, 0.5) -- (1.2, 1.0);
\draw (0.6, -2.0) node {$\langle1, 1\rangle$};
\draw[->] (1.5,0) -- (2.0,0);
\draw (2.5,-1.5) -- (2.5,1.0);
\draw (2.5, -1.0) -- (3.2, -1.0) -- (3.7, -0.5);
\fill (2.7,-0.8) circle (3pt);
\fill (3.0,-0.8) circle (3pt);
\draw (2.5, -0.5) -- (3.2, -0.5) -- (3.7, 0.0);
\draw (2.5, 0.0) -- (3.2, 0.0) -- (3.7, 0.5);
\draw (2.5, 0.5) -- (3.2, 0.5) -- (3.7, 1.0);
\draw (3.1, -2.0) node {$\langle2\rangle$};
\draw[->] (4.0,0) -- (4.5,0);
\draw (5.0,-1.5) -- (5.0,1.0);
\draw (5.0, -1.0) -- (5.7, -1.0) -- (6.2, -0.5);
\draw (5.0, -0.5) -- (5.7, -0.5) -- (6.2, 0.0);
\fill (5.2,-0.3) circle (3pt);
\draw (5.0, 0.0) -- (5.7, 0.0) -- (6.2, 0.5);
\fill (5.2,0.2) circle (3pt);
\draw (5.0, 0.5) -- (5.7, 0.5) -- (6.2, 1.0);
\draw (5.7, -2.0) node {$\langle0, 1, 1\rangle$};
\draw[->] (6.4,0) -- (7.0,0);
\draw (7.5,-1.5) -- (7.5,1.0);
\draw (7.5, -1.0) -- (8.2, -1.0) -- (8.7, -0.5);
\fill (7.7,-0.8) circle (3pt);
\draw (7.5, -0.5) -- (8.2, -0.5) -- (8.7, 0.0);
\fill (7.7,-0.3) circle (3pt);
\draw (7.5, 0.0) -- (8.2, 0.0) -- (8.7, 0.5);
\draw (7.5, 0.5) -- (8.2, 0.5) -- (8.7, 1.0);
\draw (8.1, -2.0) node {$\langle1, 1\rangle$};
\end{tikzpicture}
\caption{The  juggling sequence $(
\langle1, 1\rangle,
\langle2\rangle,
\langle0, 1, 1\rangle,
\langle1, 1\rangle
)$.}
\label{fig:multjugseq}
\end{figure}
\end{example}

\subsection{Magic juggling sequences} 

A magic juggling sequence removes the restriction that all entries in a juggling state must be nonnegative integers. An entry $s_j<0$ in a juggling state denotes $|s_j|$ ``magic'' balls at height $j$.  The name magic is used because when a magic ball and a standard non-magic ball come into contact they ``magically'' disappear.  In this way, a juggling state is now an integer vector $\bfs = \langle s_1, \ldots, s_h \rangle$, and a magic juggling sequence $S=(\bfs_0,\ldots,\bfs_n)$ satisfies Equation~\eqref{eq:successive} and requires that any magic balls not be redistributed.  For this reason, any hand capacity constraint only applies to non-magic balls.  See Figure~\ref{fig:magicmultjuggseq} for an example of a magic  juggling sequence in $\JS(\langle 2, 1, 0, -1 \rangle, \langle 1, 1 \rangle, 3)$.   Magic balls arising from negative entries in the starting state are represented by white balls. 

Because a magic juggling sequence is simply a standard juggling sequence when the entries of $\bfa$ are nonnegative, we can extend the definitions of $\JS(\bfa,\bfb,n,m)$ and $\js(\bfa,\bfb,n,m)$ to represent the set and number of magic juggling sequences, respectively.  We once again drop the last parameter if there is no hand capacity constraint or if the hand capacity constraint is larger than the number of non-magic balls.  Since we only consider magic juggling sequences we drop the word magic.

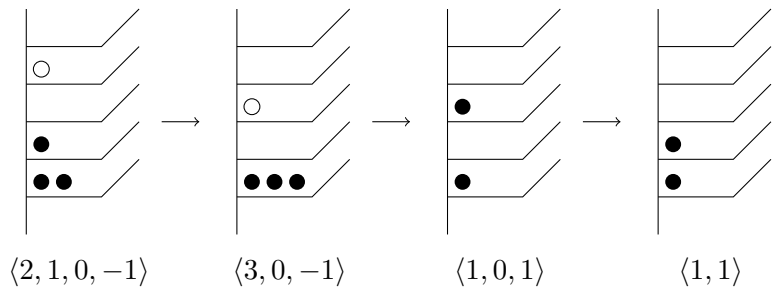
\begin{figure}[htbp]
    \centering
    \begin{tikzpicture}
\draw (0,-1.5) -- (0,1.5);
\draw (0, -1.0) -- (1.0, -1.0) -- (1.5, -0.5);
\fill (0.2,-0.8) circle (3pt);
\fill (0.5,-0.8) circle (3pt);
\draw (0, -0.5) -- (1.0, -0.5) -- (1.5, 0.0);
\fill (0.2,-0.3) circle (3pt);
\draw (0, 0.0) -- (1.0, 0.0) -- (1.5, 0.5);
\draw (0, 0.5) -- (1.0, 0.5) -- (1.5, 1.0);
\draw (0.2,0.7) circle (3pt);
\draw (0, 1.0) -- (1.0, 1.0) -- (1.5, 1.5);
\draw (0.7, -2.0) node {$\langle2, 1, 0, -1\rangle$};
\draw[->] (1.8,0) -- (2.3,0);
\draw (2.8,-1.5) -- (2.8,1.5);
\draw (2.8, -1.0) -- (3.8, -1.0) -- (4.3, -0.5);
\fill (3.0,-0.8) circle (3pt);
\fill (3.3,-0.8) circle (3pt);
\fill (3.6,-0.8) circle (3pt);
\draw (2.8, -0.5) -- (3.8, -0.5) -- (4.3, 0.0);
\draw (2.8, 0.0) -- (3.8, 0.0) -- (4.3, 0.5);
\draw (3.0,0.2) circle (3pt);
\draw (2.8, 0.5) -- (3.8, 0.5) -- (4.3, 1.0);
\draw (2.8, 1.0) -- (3.8, 1.0) -- (4.3, 1.5);
\draw (3.5, -2.0) node {$\langle3, 0, -1\rangle$};
\draw[->] (4.6,0) -- (5.1,0);
\draw (5.6,-1.5) -- (5.6,1.5);
\draw (5.6, -1.0) -- (6.6, -1.0) -- (7.1, -0.5);
\fill (5.8,-0.8) circle (3pt);
\draw (5.6, -0.5) -- (6.6, -0.5) -- (7.1, 0.0);
\draw (5.6, 0.0) -- (6.6, 0.0) -- (7.1, 0.5);
\fill (5.8,0.2) circle (3pt);
\draw (5.6, 0.5) -- (6.6, 0.5) -- (7.1, 1.0);
\draw (5.6, 1.0) -- (6.6, 1.0) -- (7.1, 1.5);
\draw (6.3, -2.0) node {$\langle1, 0, 1\rangle$};
\draw[->] (7.4,0) -- (7.9,0);
\draw (8.4,-1.5) -- (8.4,1.5);
\draw (8.4, -1.0) -- (9.4, -1.0) -- (9.9, -0.5);
\fill (8.6,-0.8) circle (3pt);
\draw (8.4, -0.5) -- (9.4, -0.5) -- (9.9, 0.0);
\fill (8.6,-0.3) circle (3pt);
\draw (8.4, 0.0) -- (9.4, 0.0) -- (9.9, 0.5);
\draw (8.4, 0.5) -- (9.4, 0.5) -- (9.9, 1.0);
\draw (8.4, 1.0) -- (9.4, 1.0) -- (9.9, 1.5);
\draw (9.1, -2.0) node {$\langle1, 1\rangle$};
\end{tikzpicture}
\caption{Example of a magic  juggling sequence.}
    \label{fig:magicmultjuggseq}
\end{figure}

\subsection{Labeled juggling sequences}

A {\em labeled juggling sequence} allows the balls to be distinguishable. In this case we let $l\leq b$ denote the labels we may use. In this case, a juggling state is a vector $\bfs = \langle \bfu_1, \ldots, \bfu_h \rangle$ of integer tuples  $\bfu_i=[u_i^1,\ldots,u_i^l]\in\mathbb{Z}^l$ where $|u_i^j|$ records how many balls of label $j$ are present at height $i$. Note that if $u_i^j>0$ then there are $|u_i^j|$ balls labeled  $j$ at height $i$, and if $u_i^j<0$ then there are $|u_i^j|$ magic balls labeled $j$ at height $i$.

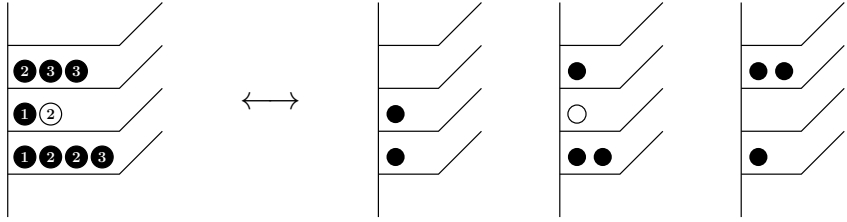
\begin{figure}[htbp]
    \centering
    \resizebox{4.5in}{!}{
    \begin{tikzpicture}
    \draw (0,-1.5) -- (0,1.0);
    \draw (0, -1.0) -- (1.3, -1.0) -- (1.8, -0.5);
    \circled{(0.2,-0.8)}{\Large\textbf{1}}
    \circled{(0.5,-0.8)}{\Large\textbf{2}}
    \circled{(0.8,-0.8)}{\Large\textbf{2}}
    \circled{(1.1,-0.8)}{\Large\textbf{3}}
    \draw (0, -0.5) -- (1.3, -0.5) -- (1.8, 0.0);
    \circled{(0.2,-0.3)}{\Large\textbf{1}}
    \dashcircled{(0.5,-0.3)}{\Large\textbf{2}}
    \draw (0, 0.0) -- (1.3, 0.0) -- (1.8, 0.5);
    \circled{(0.2,0.2)}{\Large\textbf{2}}
    \circled{(0.5,0.2)}{\Large\textbf{3}}
    \circled{(0.8,0.2)}{\Large\textbf{3}}
    \draw (0, 0.5) -- (1.3, 0.5) -- (1.8, 1.0);
    \end{tikzpicture}
    \qquad
    \raisebox{.5in}{$\longleftrightarrow$}
    \qquad
        \begin{tikzpicture}
    \draw (0,-1.5) -- (0,1.0);
    \draw (0, -1.0) -- (0.7, -1.0) -- (1.2, -0.5);
    \fill (0.2,-0.8) circle (3pt);
    \draw (0, -0.5) -- (0.7, -0.5) -- (1.2, 0.0);
    \fill (0.2,-0.3) circle (3pt);
    \draw (0, 0.0) -- (0.7, 0.0) -- (1.2, 0.5);
    \draw (0, 0.5) -- (0.7, 0.5) -- (1.2, 1.0);
    \end{tikzpicture}
\qquad
    \begin{tikzpicture}
    \draw (2.5,-1.5) -- (2.5,1.0);
    \draw (2.5, -1.0) -- (3.2, -1.0) -- (3.7, -0.5);
    \fill (2.7,-0.8) circle (3pt);
    \fill (3.0,-0.8) circle (3pt);
    \draw (2.5, -0.5) -- (3.2, -0.5) -- (3.7, 0.0);
    \draw (2.7,-0.3) circle (3pt);
    \draw (2.5, 0.0) -- (3.2, 0.0) -- (3.7, 0.5);
    \fill (2.7,0.2) circle (3pt);
    \draw (2.5, 0.5) -- (3.2, 0.5) -- (3.7, 1.0);
    \end{tikzpicture}
\qquad
    \begin{tikzpicture}
    \draw (5.0,-1.5) -- (5.0,1.0);
    \draw (5.0, -1.0) -- (5.7, -1.0) -- (6.2, -0.5);
    \fill (5.2,-0.8) circle (3pt);
    \draw (5.0, -0.5) -- (5.7, -0.5) -- (6.2, 0.0);
    \draw (5.0, 0.0) -- (5.7, 0.0) -- (6.2, 0.5);
    \fill (5.2,0.2) circle (3pt);
    \fill (5.5,0.2) circle (3pt);
    \draw (5.0, 0.5) -- (5.7, 0.5) -- (6.2, 1.0);
    \end{tikzpicture}
    }
    \caption{The labeled juggling state $\langle[1,2,1], [1,-1,0], [0, 1, 2]\rangle$ decomposes into three juggling states,    $\bfs^1=\langle1,1\rangle$,
    $\bfs^2=\langle2,-1,1\rangle$, and
    $\bfs^3=\langle1,0,2\rangle$. 
    }
    \label{fig:labeled_state}
\end{figure}

There is a natural decomposition of a labeled juggling state with $l$ labels into $l$ juggling states, each independently corresponding to having balls with only one specified label.   If $\bfs = \langle \bfu_1, \ldots, \bfu_h \rangle$ is a labeled juggling state, then $\bfs^j = \langle u_1^j, \ldots, u_h^j \rangle$ is an unlabeled juggling state, as in Figure~\ref{fig:labeled_state}.
Similarly, a labeled juggling sequence $S=\langle\bfs_0,\ldots,\bfs_n\rangle$ involving $l$ labels  can be decomposed into $l$ juggling sequences $S^j=\langle\bfs_0^j,\ldots,\bfs_n^j\rangle$, one for each label $1\leq j\leq l$.  An example of a labeled juggling sequence is shown in Figure~\ref{fig:labeled_sequence}.

\begin{figure}[htbp]
    \centering
    \resizebox{0.7\textwidth}{!}{
    \begin{tikzpicture}
    \draw (0,-1.5) -- (0,1.0);
    \draw (0, -1.0) -- (1.3, -1.0) -- (1.8, -0.5);
    \circled{(0.2,-0.8)}{\bf \Large 1}
    \circled{(0.5,-0.8)}{\bf \Large 2}
    \circled{(0.8,-0.8)}{\bf \Large 2}
    \circled{(1.1,-0.8)}{\bf \Large 3}
    \draw (0, -0.5) -- (1.3, -0.5) -- (1.8, 0.0);
    \circled{(0.2,-0.3)}{\bf \Large 1}
    \dashcircled{(0.5,-0.3)}{\bf \Large 2}
    \draw (0, 0.0) -- (1.3, 0.0) -- (1.8, 0.5);
    \circled{(0.2,0.2)}{\bf \Large 2}
    \circled{(0.5,0.2)}{\bf \Large 3}
    \circled{(0.8,0.2)}{\bf \Large 3}
    \draw (0, 0.5) -- (1.3, 0.5) -- (1.8, 1.0);
    \draw[->] (2.0,0) -- (2.5,0);
    \draw (3.0,-1.5) -- (3.0,1.0);
    \draw (3.0, -1.0) -- (4.3, -1.0) -- (4.8, -0.5);
    \circled{(3.2,-0.8)}{\bf \Large 1}
    \circled{(3.5,-0.8)}{\bf \Large 3}
    \draw (3.0, -0.5) -- (4.3, -0.5) -- (4.8, 0.0);
    \circled{(3.2,-0.3)}{\bf \Large 2}
    \circled{(3.5,-0.3)}{\bf \Large 3}
    \circled{(3.8,-0.3)}{\bf \Large 3}
    \draw (3.0, 0.0) -- (4.3, 0.0) -- (4.8, 0.5);
    \circled{(3.2,0.2)}{\bf \Large 1}
    \circled{(3.5,0.2)}{\bf \Large 2}
    \draw (3.0, 0.5) -- (4.3, 0.5) -- (4.8, 1.0);
    \draw[->] (5.0,0) -- (5.5,0);
    \draw (6.0,-1.5) -- (6.0,1.0);
    \draw (6.0, -1.0) -- (7.3, -1.0) -- (7.8, -0.5);
    \circled{(6.2,-0.8)}{\bf \Large 2}
    \circled{(6.5,-0.8)}{\bf \Large 3}
    \circled{(6.8,-0.8)}{\bf \Large 3}
    \circled{(7.1,-0.8)}{\bf \Large 3}
    \draw (6.0, -0.5) -- (7.3, -0.5) -- (7.8, 0.0);
    \circled{(6.2,-0.3)}{\bf \Large 1}
    \circled{(6.5,-0.3)}{\bf \Large 2}
    \draw (6.0, 0.0) -- (7.3, 0.0) -- (7.8, 0.5);
    \circled{(6.2,0.2)}{\bf \Large 1}
    \draw (6.0, 0.5) -- (7.3, 0.5) -- (7.8, 1.0);
    \draw[->] (8.0,0) -- (8.5,0);
    \draw (9.0,-1.5) -- (9.0,1.0);
    \draw (9.0, -1.0) -- (10.3, -1.0) -- (10.8, -0.5);
    \circled{(9.2,-0.8)}{\bf \Large 1}
    \circled{(9.5,-0.8)}{\bf \Large 2}
    \circled{(9.8,-0.8)}{\bf \Large 2}
    \draw (9.0, -0.5) -- (10.3, -0.5) -- (10.8, 0.0);
    \circled{(9.2,-0.3)}{\bf \Large 1}
    \circled{(9.5,-0.3)}{\bf \Large 3}
    \circled{(9.8,-0.3)}{\bf \Large 3}
    \circled{(10.1,-0.3)}{\bf \Large 3}
    \draw (9.0, 0.0) -- (10.3, 0.0) -- (10.8, 0.5);
    \draw (9.0, 0.5) -- (10.3, 0.5) -- (10.8, 1.0);
    \end{tikzpicture}}
    \caption{The labeled juggling sequence 
     $(\langle [1,2,1], [1,-1,0], [0,1,2] \rangle$,\newline $\langle [1,0,1], [0,1,2], [1,1,0] \rangle$, $\langle [0,1,3], [1,1,0], [1,0,0] \rangle$, $\langle [1,2,0], [1,0,3] \rangle)$.}
    \label{fig:labeled_sequence}
\end{figure}
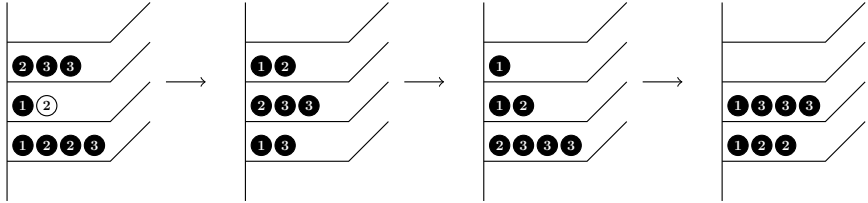

A consequence of this decomposition is a formula for the number $\ljs$ of labeled juggling sequences as a product of numbers of juggling sequences.

\begin{proposition}
\label{prop:labeledmagic}
Let $\bfa$ and $\bfb$ be labeled juggling states and $n$ be an integer. Then\[\ljs(\bfa,\bfb,n) = \prod_{j=1}^l \js(\bfa^j,\bfb^j,n).\]
\end{proposition}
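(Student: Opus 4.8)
The plan is to exhibit an explicit bijection between the set of labeled juggling sequences from $\bfa$ to $\bfb$ of length $n$ and the Cartesian product $\prod_{j=1}^l \JS(\bfa^j,\bfb^j,n)$, from which the product formula follows by the multiplication principle. The two maps are the decomposition $S\mapsto(S^1,\ldots,S^l)$ described just before the statement, which splits each labeled state into its $l$ single-label components $\bfs_i^j=\langle u_1^j,\ldots,u_h^j\rangle$, together with its evident inverse, the recombination that reassembles $l$ single-label sequences of common length $n$ into one labeled sequence by reading off the label-$j$ coordinate from $S^j$. At the level of tuples of integer vectors these two maps are manifestly mutually inverse, so the only real content is to verify that each map actually lands in the claimed set.

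The heart of the argument is that the successive-state condition~\eqref{eq:successive} decomposes coordinatewise across labels. First I would record the labeled analogue of~\eqref{eq:successive}: writing a labeled state as $\bfs_{i-1}=\langle \bfu_1,\ldots,\bfu_h\rangle$ with each $\bfu_t\in\mathbb{Z}^l$, two successive labeled states are related by $\bfs_i=\langle \bfu_2+\bfb_1,\ldots,\bfu_h+\bfb_{h-1},\bfb_h,\ldots,\bfb_{h'}\rangle$ for throw vectors $\bfb_t$ satisfying $\sum_t \bfb_t=\bfu_1$, where magic balls are never redistributed. Since vector addition is computed coordinatewise and a thrown ball retains its label, extracting the label-$j$ coordinate of this relation yields precisely Equation~\eqref{eq:successive} for the single-label sequence $S^j$, with scalar throws $b_t^j$ summing to $u_1^j$. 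Conversely, an independent choice of valid throws for each label reassembles into a valid labeled throw. Hence $S$ satisfies the labeled successive condition for every consecutive pair if and only if each $S^j$ satisfies~\eqref{eq:successive}; this shows both maps are well-defined and that they preserve the boundary data, since $\bfs_0=\bfa$ and $\bfs_n=\bfb$ decompose to $\bfs_0^j=\bfa^j$ and $\bfs_n^j=\bfb^j$.

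I would also note that the non-redistribution constraint on magic balls causes no difficulty, as it is imposed height-by-height and label-by-label on the individual throw vectors and therefore reads off coordinatewise, being equivalent to imposing it on each $S^j$ separately. The step I expect to require the most care is exactly this verification that the decomposition is a genuine equivalence of constraints, with no cross-label coupling introduced by the juggling rules. Here it is worth emphasizing that the clean product formula relies on the absence of a hand-capacity constraint: a bound $m$ on the number of non-magic balls at a given height would limit the sum over all labels simultaneously and thereby couple the sequences $S^1,\ldots,S^l$, which is why the statement is phrased without the parameter $m$. With the coordinatewise equivalence in hand, the decomposition and recombination maps are mutually inverse bijections between $\LJS(\bfa,\bfb,n)$ and $\prod_{j=1}^l \JS(\bfa^j,\bfb^j,n)$, and counting both sides gives $\ljs(\bfa,\bfb,n)=\prod_{j=1}^l \js(\bfa^j,\bfb^j,n)$.
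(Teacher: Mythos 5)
Your proposal is correct and follows exactly the route the paper takes: the paper asserts the proposition as an immediate consequence of the label-by-label decomposition $S\mapsto(S^1,\ldots,S^l)$ described just before the statement, and your write-up simply supplies the details (coordinatewise decomposition of the successive-state condition, the inverse recombination map, and preservation of the boundary states) that the paper leaves implicit. Your closing observation about hand capacity coupling the labels also matches the paper's remark following the proposition.
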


Proposition~\ref{prop:labeledmagic} answers the question of Butler and Graham in \cite[Section~4.2]{BG} of how many juggling sequences exist when the balls are not identical, so as long as the hand capacity is large enough. Note that Proposition~\ref{prop:labeledmagic} does not apply when hand capacities are restrictive because a hand capacity for balls of all labels cannot be independently decomposed into multiple hand capacities.

\subsection{Kostant's partition function}\label{sec:lietypes}
In what follows we use the choices of vector space bases presented in Goodman and Wallach \cite[Section 2.4.3]{GW}.  For each Lie type, we describe a choice of simple roots, list the positive roots, and specify the corresponding highest root. Throughout, $\varepsilon_1,\varepsilon_2,\ldots,\varepsilon_d$ denote the standard basis vectors of $\mathbb{R}^d$, with $d$ being appropriately chosen depending on the Lie algebra. Also, $\Phi$ denotes a root system for a Lie algebra of type $A_r$, $B_r$, $C_r$ or $D_r$ and $\Delta = \{\alpha_1,\alpha_2,\ldots,\alpha_r\} \subset \Phi$ is a set of simple roots if every root in $\Phi$ can be written as a linear combination of the elements in $\Delta$ where the coefficients all have the same sign. We denote by $\Phi^+$ the set of \emph{positive roots of $\Phi$}, that is, the set of roots that can be written as linear combination of the roots in $\Delta$ with coefficients equal to 0 or 1. When multiple root systems are involved, we include a subscript in $\Phi^+$ to denote the Lie algebra of interest, as in Example \ref{ex:A3}.

\medskip
\noindent Type $A_r$ ($\mathfrak{sl}_{r+1}(\mathbb C)$): Let  $r\geq 1$ and let $\alpha_i=\varepsilon_i-\varepsilon_{i+1}$ for $1\leq i\leq r$. Then $\Delta = \{\alpha_i \ | \ 1 \leq i \leq r \}$ is a set of simple roots. The associated set of positive roots is $\Phi^+=\{\varepsilon_i-\varepsilon_j:1\leq i<j\leq r+1 \}$, and the highest root is $\tilde{\alpha}=\alpha_1+\alpha_2+\cdots+\alpha_r$.
 
\bigskip
\noindent
Type $B_r$ ($\mathfrak{so}_{2r+1}(\mathbb C)$): Let $r\geq 2$ and let $\alpha_i=\varepsilon_i-\varepsilon_{i+1}$ for $1\leq i\leq r-1$ and $\alpha_r=\varepsilon_r$. Then $\Delta = \{\alpha_i \ | \ 1 \leq i \leq r \}$ is a set of simple roots. The associated set of positive roots is $\Phi^+=\{\varepsilon_i-\varepsilon_j,\varepsilon_i+\varepsilon_j:1\leq i<j\leq r \}\cup\{\varepsilon_i:1\leq i\leq r\}$, and the highest root is $\tilde{\alpha}=\alpha_1+2\alpha_2+\cdots+2\alpha_r$.

\bigskip
\noindent
Type $C_r$ ($\mathfrak{sp}_{2r}(\mathbb C)$): Let $r\geq 3$ and let $\alpha_i=\varepsilon_i-\varepsilon_{i+1}$ for $1\leq i\leq r-1$ and $\alpha_r=2\varepsilon_r$. Then $\Delta = \{\alpha_i \ | \ 1 \leq i \leq r \}$ is a set of simple roots. The associated set of positive roots is $\Phi^+=\{\varepsilon_i-\varepsilon_j,\varepsilon_i+\varepsilon_j:1\leq i<j\leq r \}\cup\{2\varepsilon_i:1\leq i\leq r\}$, and the highest root is $\tilde{\alpha}=2\alpha_1+2\alpha_2+\cdots+2\alpha_{r-1}+\alpha_r$.

\bigskip
\noindent
Type $D_r$ ($\mathfrak{so}_{2r}(\mathbb C)$): Let $r\geq 4$ and let $\alpha_i=\varepsilon_i-\varepsilon_{i+1}$ for $1\leq i\leq r-1$ and $\alpha_r=\varepsilon_{r-1}+\varepsilon_r$. Then $\Delta = \{\alpha_i \ | \ 1 \leq i \leq r \}$ is a set of simple roots. The associated set of positive roots is $\Phi^+=\{\varepsilon_i-\varepsilon_j,\;\varepsilon_i+\varepsilon_j\ | \ 1\leq i<j\leq r\}$, and the highest root is $\tilde{\alpha}= \varepsilon_1 + \varepsilon_2=\alpha_1+2\alpha_2+\cdots+2\alpha_{r-2}+\alpha_{r-1}+\alpha_r$.

\begin{remark} \label{rem:roots-different-types}
Since a Lie algebra of type $A_r$ is a subalgebra of a Lie algebra $\g$ of Lie type $B_r$, $C_r$, or $D_r$, we simplify the exposition and reference positive roots in type $A_r$ as positive roots in other Lie types. More rigorously, 
we map positive roots in $\Phi_{A_r}^+$ to their analogous positive roots in $\Phi_{\g}^+$ by mapping every simple root $\a_i\in\Phi_{A_r}^+$ to the simple root $\a_i\in\Phi_{\g}^+$ for all $1\leq i\leq r$, and extend this map linearly to all positive roots in $\Phi_{A_r}^+$. 
\end{remark}

We are now ready to define our main object of study.
\begin{definition}
Kostant's partition function is a nonnegative integer valued function which counts the number of ways that a weight $\mu$ can be written as a nonnegative integer linear combination of elements of $\Phi^+$. We denote this count by $\wp(\mu)$. 
\end{definition}
When multiple root systems are involved, we include a subscript on $K$ to denote the Lie algebra of interest. 

An equivalent formulation is to represent a nonnegative integral linear combination of positive roots as a multiset $p=\{\beta_1,\beta_2,\ldots,\beta_l\}$ with the roots $\beta_i\in\Phi^+$.  We say $p$ is a \emph{partition of $\mu$} if $\sum_{i=1}^l\beta_i=\mu$ and we let $P(\mu)$ denote the set of partitions of the weight $\mu$.  We therefore have $K(\mu)=|P(\mu)|$.  By convention, $K(0)$ is defined to be $1$.

\begin{example}\label{ex:A3}
Let $\mu = \alpha_1 + 2\alpha_2 + \alpha_3$ be a weight of the Lie algebra of type $A_3$. Then
\begin{align*}
    P(\mu)&=\begin{Bmatrix}\{\alpha_1,\alpha_2,\alpha_2,\alpha_3\},\{\alpha_1 + \alpha_2, \alpha_2+\alpha_3\},\\\{\alpha_1 + \alpha_2,\alpha_2,\alpha_3\},\{\alpha_1,\alpha_2,\alpha_2 + \alpha_3\},\{\alpha_1 + \alpha_2 + \alpha_3,\alpha_2\}\end{Bmatrix}
\end{align*}
are all of the partitions of $\mu$ using the elements of $\Phi_{A_3}^+$, so $\wp(\mu) = 5$. 
\end{example}

In Section \ref{sec:restrict}, we will restrict the set of positive roots allowed in the partitions of a weight; we define a restricted Kostant's partition function as follows.

\begin{definition}
Let $\Lambda\subseteq \Phi^+$. Then $K_\Lambda$ defines a restriction to Kostant's partition function, counting the number of ways that a weight $\mu$ can be written as a nonnegative integer linear combination of elements of $\Lambda$. We let $P_\Lambda(\mu)$ denote the set of partitions of the weight $\mu$ using the positive roots in $\Lambda$, so $K_\Lambda(\mu)=|P_\Lambda(\mu)|$. 
\end{definition}

In the sections that follow it will be useful to express a weight $\mu$ in terms of the standard basis vectors. Hence if $\mu=\mu_1\e_1+\mu_2\e_2+\cdots+\mu_{r}\e_{r}$, 
where $\mu_i$ are real numbers for all $1\leq i\leq r$, 
we use the notation 
$\langle\mu_1,\mu_2,\ldots,\mu_r\rangle$ to represent the standard basis representation of $\mu$. 

\section{The bijection} \label{sec:bijection}

In this section we describe the bijection between juggling sequences and the partitions of a weight counted by Kostant's partition function. 
The key insight in this bijection is an association between throws made at time $i$ to height $j$ and the positive root $\e_i-\e_{i+j}$ in a Lie algebra of type $A_r$, with $r$ sufficiently large. In the analogous way, given a positive root $\e_i-\e_{i+j}$ of the Lie algebra of type $A_r$ we can associate it to the throw of a ball at time $i$ to height $j$ in a juggling sequence of an appropriate length. In the definitions that follow, we make this correspondence precise. 

\begin{definition}\label{def:T} For $r\geq 1$, define $\T_r$ to be the set of throws at time $i$ to height $j$ for integers $i$ and $j$ that satisfy $1\leq i<i+j\leq r+1$. Namely,
\[\T_r=\{T_{i,j}:1\leq i<i+j\leq r+1\}.\]
\end{definition}

\begin{proposition}
The function $\Gamma:\Phi_{A_r}^+\to\T_r$ defined by $\Gamma(\e_i-\e_{i+j})=T_{i,j}$ is a bijection.
\end{proposition}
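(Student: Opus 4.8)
The plan is to prove the claim by exhibiting an explicit two-sided inverse, since both sets are indexed by integer pairs subject to a linear constraint, and the map $\Gamma$ is nothing more than the affine reindexing $(i,j)\mapsto(i,j-i)$ on those pairs.

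First I would record two bookkeeping facts. On the source side, every element of $\Phi_{A_r}^+$ has the form $\e_i-\e_j$ with $1\le i<j\le r+1$, and since $\{\e_1,\dots,\e_{r+1}\}$ is linearly independent, the pair $(i,j)$ is uniquely recovered from the vector $\e_i-\e_j$; thus $\Phi_{A_r}^+$ is in canonical bijection with $\{(i,j): 1\le i<j\le r+1\}$. On the target side, the throws $T_{i,j}$ are by definition labeled by pairs $(i,j)$ with $1\le i<i+j\le r+1$, and distinct labels give distinct throws, so $\T_r$ is in canonical bijection with $\{(i,j):1\le i<i+j\le r+1\}$.

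Next I would define the candidate inverse $\Psi:\T_r\to\Phi_{A_r}^+$ by $\Psi(T_{i,j})=\e_i-\e_{i+j}$ and check that both $\Gamma$ and $\Psi$ land in the stated codomains. For $\Gamma$: given $\e_i-\e_j\in\Phi_{A_r}^+$, write $h=j-i$, so $h\ge 1$ and the root equals $\e_i-\e_{i+h}$; the constraint $1\le i<j\le r+1$ becomes $1\le i<i+h\le r+1$, which is exactly the membership condition for $T_{i,h}\in\T_r$, so $\Gamma(\e_i-\e_j)=T_{i,h}$ is well defined. For $\Psi$: given $T_{i,j}\in\T_r$ we have $1\le i<i+j\le r+1$, so setting $j'=i+j$ gives $1\le i<j'\le r+1$, whence $\e_i-\e_{j'}=\e_i-\e_{i+j}\in\Phi_{A_r}^+$. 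Thus the substitution $j\leftrightarrow i+j$ matches the two constraints exactly.

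Finally I would verify that the compositions are identities: $\Psi(\Gamma(\e_i-\e_j))=\Psi(T_{i,j-i})=\e_i-\e_{i+(j-i)}=\e_i-\e_j$, and $\Gamma(\Psi(T_{i,j}))=\Gamma(\e_i-\e_{i+j})=T_{i,(i+j)-i}=T_{i,j}$. This shows $\Gamma$ and $\Psi$ are mutually inverse, so $\Gamma$ is a bijection. There is essentially no real obstacle here: the only point requiring care is confirming that the inequality defining positive roots, $i<j\le r+1$, transforms under $j=i+h$ into precisely the inequality $i<i+h\le r+1$ defining $\T_r$, so that no root is sent outside $\T_r$ and no throw is missed. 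Everything else is a routine verification of mutual inverses on uniquely-labeled index sets.
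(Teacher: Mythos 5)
Your proposal is correct and matches the paper's argument in substance: the paper checks well-definedness, injectivity, and surjectivity directly, while you package the same observation as an explicit two-sided inverse $\Psi(T_{i,j})=\e_i-\e_{i+j}$, which is a trivially equivalent formulation. Both hinge on the single key point that the reindexing $j\leftrightarrow i+j$ carries the constraint $1\le i<j\le r+1$ exactly onto $1\le i<i+j\le r+1$.
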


\begin{proof}
The function is well defined because every positive root in type $A_r$ is of the form $\e_i-\e_{i+j}$ with $1\leq i<i+j\leq r+1$. By construction $\Gamma$ is injective. The surjectivity of $\Gamma$ follows because for every throw $T_{i,j}\in\T_r$, the positive root $\e_i-\e_{i+j}\in \Phi_{A_{r}}^+$ satisfies  $\Gamma(\e_i-\e_{i+j})=T_{i,j}$.
\end{proof}

We can characterize a juggling sequence through the multiset of throws that occur during the sequence. 
Given that $\Gamma$ is a bijection between $\Phi_{A_{r}}^+$ and $\T_r$, then so is its extension to multisets.

\begin{definition}\label{def:gamma}
We extend $\Gamma$ to apply to a multiset $p$ of positive roots $\Phi_{A_r}^+$.  Define $\Gamma(p)$ to be the multiset $\{\Gamma(\beta): \mbox{$\beta\in p$}\}\subseteq \T_r$. 
\end{definition}

Next we give a connection between the weight we partition and the multiset of throws in a juggling sequence.

\begin{definition}
Consider a juggling sequence $S = \langle \bfs_0, \bfs_1, \ldots, \bfs_n\rangle$ whose corresponding multiset of throws is $\T$.
Define the {\em net change vector} $\delta(S)$ of $S$ as 
\[ 
\delta(S) = \sum_{T\in\T}\Gamma^{-1}(T)=\sum_{i=1}^n\sum_{j\in J_i} (\e_i - \e_{i+j}),
\] 
where $J_i$ is the multiset of heights to which balls are thrown at time $i$.
\end{definition}

For example, the net change vector of the juggling sequence $S=(
\langle1, 1\rangle,
\langle2\rangle,
\langle0, 1, 1\rangle,
\langle1, 1\rangle
)$, as illustrated in Figure~\ref{fig:multjugseq}, is 
\[
\delta(S)=(\e_1-\e_2)+(\e_2-\e_4)+(\e_2-\e_5)=\e_1+\e_2-\e_4-\e_5,
\]
while the net change vector of the juggling sequence
$S=(\langle2,1,0,-1\rangle,
\langle3,0,-1\rangle,
\langle1,0,1\rangle,
\langle1,1\rangle)$ from Figure~\ref{fig:magicmultjuggseq} is
\[
\delta(S)=2(\e_1-\e_2)+(\e_2-\e_3)+(\e_2-\e_4)+(\e_2-\e_5)+(\e_3-\e_4)=2\e_1+\e_2-2\e_4-\e_5.
\]

\begin{theorem}\label{thm:netheight}
 Consider initial state $\bfa=\langle a_1,\ldots,a_s\rangle$, terminal state $\bfb=\langle b_1,\ldots,b_t\rangle$, sequence length $n$, and hand capacity $m$. If $a_1+\cdots+a_s = b_1+\cdots+b_t$, then the net change vector for every $S\in \JS(\bfa,\bfb,n,m)$ is 
 \begin{equation}\label{eq:deltas}
 \delta(\bfa,\bfb,n,m)=a_1\e_1+\cdots+a_s\e_s-(b_1\e_{n+1}+\cdots + b_t\e_{n+t}).
 \end{equation}
\end{theorem}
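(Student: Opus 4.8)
The plan is to prove \eqref{eq:deltas} by a telescoping argument in which $\delta(S)$ collapses to a difference of contributions from the initial and terminal states. To this end I would attach to each state $\bfs_i$ of $S$ the vector $\Theta(\bfs_i)=\sum_{h\ge 1}(\bfs_i)_h\,\e_{i+h}$, where $(\bfs_i)_h$ denotes the (signed) number of balls at height $h$ at time $i$. The index $i+h$ is chosen deliberately: a ball sitting at height $h$ at time $i$ would, descending undisturbed, reach height $1$ at time $i+h-1$ and be thrown at time $i+h$, and $i+h$ is exactly the second index of the root $\e_i-\e_{i+j}$ that the correspondence associates to a throw $T_{i,j}$. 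Thus $\Theta$ is tailored to the definition of the net change vector.

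The key step is a per-transition identity: for each $1\le i\le n$, the throws made in passing from $\bfs_{i-1}$ to $\bfs_i$ contribute precisely $\Theta(\bfs_{i-1})-\Theta(\bfs_i)$ to $\delta(S)$. Writing $s_h=(\bfs_{i-1})_h$ and letting $b_h$ be the number of balls thrown to height $h$ at time $i$, the relation \eqref{eq:successive} reads $(\bfs_i)_h=s_{h+1}+b_h$ with $\sum_h b_h=s_1$. Expanding $\Theta(\bfs_i)$ and reindexing the gravity term $\sum_h s_{h+1}\e_{i+h}=\sum_{h\ge 2}s_h\e_{(i-1)+h}$ shows that it cancels against all but the height-$1$ part of $\Theta(\bfs_{i-1})$, leaving $\Theta(\bfs_{i-1})-\Theta(\bfs_i)=s_1\e_i-\sum_h b_h\e_{i+h}$. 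On the other hand, the throws at time $i$ contribute $\sum_h b_h(\e_i-\e_{i+h})=s_1\e_i-\sum_h b_h\e_{i+h}$, using $\sum_h b_h=s_1$, so the two expressions coincide. Summing over $i=1,\dots,n$ then telescopes to $\delta(S)=\Theta(\bfs_0)-\Theta(\bfs_n)$, and since $\bfs_0=\bfa$ and $\bfs_n=\bfb$ give $\Theta(\bfs_0)=\sum_h a_h\e_h$ and $\Theta(\bfs_n)=\sum_h b_h\e_{n+h}$, this is exactly \eqref{eq:deltas}.

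The step I expect to require the most care is checking that the per-transition identity is unaffected by magic balls. Because magic balls are never redistributed, the throw numbers still satisfy $b_h\ge 0$ and $\sum_h b_h=s_1$, while the descending contribution $s_{h+1}$ in \eqref{eq:successive} simply carries the possibly negative entries unchanged and silently records magic cancellations in the arithmetic of the $(\bfs_i)_h$. Since the computation above invokes only \eqref{eq:successive} together with the signed entries of the states, it applies verbatim; I would sanity-check it on the magic sequence of Figure~\ref{fig:magicmultjuggseq}, where $\Theta(\bfs_0)=2\e_1+\e_2-\e_4$ and $\Theta(\bfs_3)=\e_4+\e_5$ reproduce $\delta(S)=2\e_1+\e_2-2\e_4-\e_5$. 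Finally, I would note that the hypothesis $a_1+\cdots+a_s=b_1+\cdots+b_t$ is not used in the telescoping itself: the total number of balls is preserved by \eqref{eq:successive}, so this equality holds automatically whenever $\JS(\bfa,\bfb,n,m)$ is nonempty, and the statement is vacuous otherwise.
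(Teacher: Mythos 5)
Your proof is correct, but it takes a genuinely different route from the paper's. The paper labels the balls and decomposes $\delta(S)$ ball by ball: each thrown non-magic ball contributes a telescoping sum of roots collapsing to $\e_i-\e_{n+j}$ (start height $i$, end height $j$), with separate cases for magic/non-magic cancelling pairs and for balls that are never thrown; the hypothesis $a_1+\cdots+a_s=b_1+\cdots+b_t$ is invoked there precisely so that every magic ball can be paired with a cancelling non-magic ball. You instead decompose $\delta(S)$ time step by time step via the potential $\Theta(\bfs_i)=\sum_h (\bfs_i)_h\,\e_{i+h}$, and check directly from \eqref{eq:successive} that the throws at time $i$ contribute exactly $\Theta(\bfs_{i-1})-\Theta(\bfs_i)$, after which the sum telescopes to $\Theta(\bfs_0)-\Theta(\bfs_n)$. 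Your computation is sound (including the reindexing of the gravity term and the use of $\sum_h b_h=s_1$), and the approach buys two things: magic balls need no case analysis at all, since the signed arithmetic of the state vectors already encodes cancellation, and the stated hypothesis is exposed as redundant --- the net ball count is conserved by \eqref{eq:successive}, so $a_1+\cdots+a_s=b_1+\cdots+b_t$ holds automatically whenever $\JS(\bfa,\bfb,n,m)$ is nonempty, whereas the paper's pairing argument genuinely uses it. What the paper's per-ball argument buys in exchange is the trajectory picture: the same labeling device underlies the treatment of labeled juggling sequences (Proposition \ref{prop:labeledmagic}), and it explains physically why $\delta(S)$ depends only on where balls start and end. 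Your sanity check on Figure \ref{fig:magicmultjuggseq} is also computed correctly.
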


\begin{proof} Recall that magic balls cannot be distributed.
Consistently label the $b$ balls in a juggling sequence $S\in \JS(\bfa,\bfb,n,m)$ using the labels $1,\ldots,b$ so that balls on height $j\geq 2$ at time $i-1$ are on height $j-1$ at time $i$ and the set of (necessarily non-magic) labeled balls thrown at time $i$ are distributed to the multiset of heights $J_i$. The constraint that $a_1+\cdots+a_s = b_1+\cdots+b_t$ ensures that no magic balls disappear without cancelling a non-magic ball.

A non-magic ball that is thrown at some point and that starts at height $i$ at time $0$ and ends at height $j$ at time $n$ will cumulatively generate a contribution to $\delta(S)$ of $\e_i-\e_{n+j}$ because the intermediate contributions from throwing cancel each other in a telescoping manner.  If a magic ball interacts with a non-magic ball at time $i$ and at height $j$, then this magic ball started at height $i+j$ at time $0$. If the non-magic ball started at height $h$ then the contribution to $\delta(S)$ from this pair of balls will be $\e_h-\e_{i+j}$, again because intermediate contributions cancel in a telescoping manner. 
The contribution to $\delta(S)$ of a ball that is not thrown is necessarily zero, as is its contribution to the right-hand side of Equation~\eqref{eq:deltas} because a ball that starts at height $h$ ends at height $h-n$ contributes $\e_h-\e_{n+(h-n)}$.
Summing the contributions over all balls gives Equation~\eqref{eq:deltas}.  This does not depend on the choice of $S$.
\end{proof}

If the dimension of $\delta(S)$ is $r+1$, we can view $\delta(S)$ as a weight of the Lie algebra of type $A_r$.

\begin{theorem}\label{thm:prop:cor}
Let $r\geq 1$.  Consider an initial state $\bfa=\langle a_1,\ldots,a_s\rangle$ and a terminal state $\bfb=\langle b_1\rangle$ satisfying $s\leq r+1$ and $a_1+\cdots+a_s = b_1$. When $\delta=a_1\e_1+\cdots+a_{s}\e_{s}-b_1\e_{r+1}$,
the function $\Gamma:P_{A_r}(\delta)\to \JS(\bfa,\bfb,r)$ is a bijection.
\end{theorem}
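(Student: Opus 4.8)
The plan is to exhibit an explicit two-sided inverse rather than verifying injectivity and surjectivity separately, using Theorem~\ref{thm:netheight} to pin down the weight and the determinism of juggling to pin down the sequence. Since $\Gamma$ already gives a bijection between multisets of positive roots in $\Phi_{A_r}^+$ and multisets of throws in $\T_r$, the forward map sends a partition $p\in P_{A_r}(\delta)$ to the juggling sequence $S_p$ whose multiset of throws is $\Gamma(p)$: starting from $\bfs_0=\bfa$, at each time $k=1,\dots,r$ apply gravity and then, for every root $\e_k-\e_{k+h}\in p$, throw one ball to height $h$. The backward map sends $S\in\JS(\bfa,\bfb,r)$ to $\Gamma^{-1}(\T_S)$, where $\T_S$ is the multiset of throws of $S$. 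These maps are mutually inverse once each is shown to be well defined, because a juggling sequence is completely determined by its initial state together with its multiset of throws (gravity is deterministic and the throws prescribe the redistribution at each step), and $S_p$ has throw-multiset $\Gamma(p)$ by construction.

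First I would check that $S\mapsto\Gamma^{-1}(\T_S)$ lands in $P_{A_r}(\delta)$. Every throw of $S$ occurs at a time $1\le i\le r$, and no throw can have $i+j>r+1$: such a throw contributes a term $-\e_{i+j}$ to the net change vector whose index exceeds $r+1$, and this term can be cancelled only by a throw starting at index $i+j>r$, which is impossible since all throws occur at times at most $r$. By Theorem~\ref{thm:netheight} the net change vector equals $\delta$, whose support lies in indices $\le r+1$, so no such throw occurs and therefore $\T_S\subseteq\T_r$. Consequently $\Gamma^{-1}(\T_S)$ is a multiset of genuine positive roots of $A_r$, and Theorem~\ref{thm:netheight} gives $\sum_{\beta\in\Gamma^{-1}(\T_S)}\beta=\delta(S)=\delta$, so $\Gamma^{-1}(\T_S)\in P_{A_r}(\delta)$.

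The heart of the argument is the well-definedness of $p\mapsto S_p$, i.e.\ that the prescribed throws really assemble into a valid sequence with the correct endpoints and length. I would establish by induction on $k$ the closed form
\[
\text{(height-$h$ entry of $\bfs_k$)}\;=\;a_{k+h}+\#\{\e_i-\e_{k+h}\in p:\ i\le k\},
\]
with $a_{k+h}:=0$ when $k+h>s$, by checking directly that it satisfies the gravity-plus-throw recurrence of Equation~\eqref{eq:successive}. Evaluating at $k=0$ recovers $\bfs_0=\bfa$, and at $k=r$ the height-$h$ entry is $0$ for every $h\ge2$ (no root ends beyond $\e_{r+1}$ and $s\le r+1$) while the height-$1$ entry is $b_1$, so $\bfs_r=\langle b_1\rangle=\bfb$. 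The only genuine constraint is the juggling validity condition $\sum_j b_j=s_1$ at each step, namely that the number of balls thrown at time $k$ equals the height-$1$ entry of $\bfs_{k-1}$; the closed form converts this into the identity $x_k=a_k+y_k$, where $x_k$ and $y_k$ count the roots of $p$ starting and ending at index $k$. But $x_k-y_k$ is exactly the $\e_k$-coefficient of $\delta$, which equals $a_k$ for $k\le s$ and $0$ for $s<k\le r$, so the identity holds precisely because $p$ partitions $\delta$.

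I expect this validity step to be the main obstacle, for two intertwined reasons. The first is bookkeeping with magic balls: entries of $\bfs_k$ may be negative, so I must confirm that the height-$1$ entry is $x_k\ge0$ at every step (which it is, being a count of roots) and that the cancellations of magic and non-magic balls are correctly encoded by the net entries, so that no positivity or hand-capacity violation is hidden in the construction. The second is ensuring that the physical-ball intuition behind the closed form is matched to the formal recurrence of Equation~\eqref{eq:successive} rather than merely asserted; once the closed form has been verified against that recurrence, the endpoint computations and the validity identity are immediate consequences of the defining coefficient equations of $P_{A_r}(\delta)$, and the bijection follows.
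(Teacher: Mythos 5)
Your proposal is correct and takes essentially the same route as the paper: your closed form for the height-$h$ entry of $\bfs_k$ is precisely the paper's Equation~\eqref{map:partition2jugglingsequence}, and your argument that the inverse map is well defined is the paper's appeal to Theorem~\ref{thm:netheight}. The only difference is thoroughness --- you explicitly verify the step-validity identity $x_k=a_k+y_k$ and the fact that no throw of a sequence in $\JS(\bfa,\bfb,r)$ can land beyond index $r+1$, both of which the paper leaves implicit.
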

\begin{proof}
To show $\Gamma$ is a bijection, we need only show that both  $\Gamma$ and its inverse are well defined on their respective domains. We first show that $\Gamma$ is well defined.

Since $s\leq r+1$, we write $\bfa=\langle a_1,a_2,\ldots,a_{r+1}\rangle$, where we append zeros so $\bfa$ has $r+1$ entries. Let $p$ be an arbitrary partition of \[\delta=a_1\e_1+\cdots+a_{r}\e_{r}-(b_1-a_{r+1})\e_{r+1}\] into positive roots of the Lie algebra of type $A_r$, and further subpartition $p$ into multisets $p_i$ for $1\leq i\leq r$ consisting of all positive roots in $p$ of the form $\e_i-\e_j$ for some $i<j\leq r+1$. Let $m(i,j)$ denote the multiplicity of $\e_i-\e_j$ in the set $p_i$. 
We construct a juggling sequence $S=(\bfs_0,\hdots,\bfs_r)$ as follows.  Define
\begin{align} \label{map:partition2jugglingsequence}
    \bfs_{i}&=\begin{cases}\langle a_1,a_2,a_3,\ldots, a_{r+1}\rangle&\mbox{if $i=0$}\\\langle a_{i+1},a_{i+2},\ldots, a_{r+1}\rangle+\left\langle\displaystyle\sum_{j=1}^im(j,i+1),\displaystyle\sum_{j=1}^im(j,i+2),\ldots,\displaystyle\sum_{j=1}^im(j,r+1)\right\rangle&\mbox{if $1\leq i\leq r$.}
    \end{cases}
\end{align}
(In Definition \ref{def:gamma}, we append 0's to the shorter vector as necessary to make the addition well defined.) In the juggling sequence we are constructing, at time $i$ we will have $m(i,k)$ balls thrown to height $k$ and
\begin{align}
    \bfs_r&=\left\langle a_{r+1}+\displaystyle\sum_{j=1}^rm(j,r+1)\right\rangle.
\end{align}
Thus  
$\sum_{j=1}^rm(j,r+1)=b_1-a_{r+1}$, since this sum counts the number of positive roots containing $-\e_{r+1}$. Therefore $\bfs_r=\bfb$, so $\Gamma(p)\in \JS(\bfa,\bfb,r)$. 

It is easier to see that the inverse function is well defined; it follows directly from Theorem~\ref{thm:netheight}.
\end{proof}

In the next section we start with an arbitrary juggling sequence and determine the corresponding partition function.  
Before doing so we present results for calculating Kostant's partition functions of Type $A$ for various weights using juggling sequences.

\begin{theorem}
\label{thm:unresAA}
Let $\mu$ be a weight of the Lie algebra of type $A_r$ and let $\langle\mu_1,\ldots,\mu_{r+1}\rangle$ be its standard basis vector representation. The function $\Gamma$ is a bijection between $P_{A_r}(\mu)$ and \[\JS(\langle\mu_1,\ldots,\mu_{r}\rangle, \langle \mu_{1}+\cdots+\mu_r \rangle, r).\]
\end{theorem}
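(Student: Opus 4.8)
The plan is to obtain this statement as a specialization of Theorem~\ref{thm:prop:cor}; the only genuine content is matching the parameters and checking that the weight $\delta$ of that theorem coincides with $\mu$.

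First I would set $\bfa=\langle\mu_1,\ldots,\mu_r\rangle$ (so that $s=r$) and $\bfb=\langle\mu_1+\cdots+\mu_r\rangle$ (so that $b_1=\mu_1+\cdots+\mu_r$). The hypotheses of Theorem~\ref{thm:prop:cor} are then immediate: $s=r\leq r+1$, and $a_1+\cdots+a_s=\mu_1+\cdots+\mu_r=b_1$. With these choices the codomain $\JS(\bfa,\bfb,r)$ is exactly $\JS(\langle\mu_1,\ldots,\mu_r\rangle,\langle\mu_1+\cdots+\mu_r\rangle,r)$, and the length and (absent) hand-capacity parameters agree with those in the statement.

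The key step is to reconcile the two domains. Theorem~\ref{thm:prop:cor} gives a bijection out of $P_{A_r}(\delta)$ with $\delta=\mu_1\e_1+\cdots+\mu_r\e_r-(\mu_1+\cdots+\mu_r)\e_{r+1}$, whereas the present statement concerns $P_{A_r}(\mu)$ with $\mu=\mu_1\e_1+\cdots+\mu_{r+1}\e_{r+1}$. I would argue that these sets coincide by observing that every positive root $\e_i-\e_j$ in type $A_r$ has coordinate sum zero, so any weight admitting a partition into positive roots must itself have coordinate sum zero and hence lie in the root lattice. This forces $\mu_{r+1}=-(\mu_1+\cdots+\mu_r)$, which is precisely the condition $\mu=\delta$, so $P_{A_r}(\mu)=P_{A_r}(\delta)$. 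Invoking Theorem~\ref{thm:prop:cor} then yields the claimed bijection.

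I expect the main subtlety to be this coordinate-sum bookkeeping rather than a deep obstacle: one must note that the terminal-height data absent from the juggling parameters is recovered from the forced value of $\mu_{r+1}$, and that appending a zero to $\bfa$ to give it $r+1$ entries (as in the proof of Theorem~\ref{thm:prop:cor}) leaves $\delta$ unchanged. Once $\mu=\delta$ is established, the conclusion is immediate.
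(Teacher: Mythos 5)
Your overall route is exactly the paper's: specialize Theorem~\ref{thm:prop:cor} with $\bfa=\langle\mu_1,\ldots,\mu_r\rangle$, $\bfb=\langle\mu_1+\cdots+\mu_r\rangle$, and length $r$, then identify the net change vector $\delta$ with $\mu$. Your parameter matching is correct and is all the paper does. The problem is your ``key step,'' which contains a genuine logical hole.

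You justify $\mu_{r+1}=-(\mu_1+\cdots+\mu_r)$ by arguing that any weight \emph{admitting a partition} into positive roots has coordinate sum zero. That establishes only the implication: if $P_{A_r}(\mu)\neq\emptyset$ then $\mu=\delta$. It says nothing when $P_{A_r}(\mu)=\emptyset$, and in that case your claimed equality $P_{A_r}(\mu)=P_{A_r}(\delta)$ --- and the theorem itself, read with your unrestricted notion of $\mu$ --- can fail. Concretely, take $r=1$ and $\mu=\e_1+\e_2$: then $P_{A_1}(\mu)=\emptyset$, while $\delta=\e_1-\e_2$ gives $P_{A_1}(\delta)=\{\{\e_1-\e_2\}\}$ and $\JS(\langle 1\rangle,\langle 1\rangle,1)$ consists of the single sequence that throws the ball back to height $1$; a bijection between an empty set and a one-element set is impossible. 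So the sum-zero condition cannot be \emph{derived} from the hypotheses as you use them; it must be read as part of what ``standard basis vector representation of a weight'' means. This is what the paper does: since $\e_1+\cdots+\e_{r+1}$ vanishes on the Cartan subalgebra of $\mathfrak{sl}_{r+1}$, the representative is normalized so that $\mu_1+\cdots+\mu_{r+1}=0$, and the proof simply states ``where $\mu_{r+1}=-(\mu_1+\cdots+\mu_r)$,'' after which $\delta=\langle\mu_1,\ldots,\mu_r,\mu_{r+1}\rangle=\mu$ identically, with no case analysis and no appeal to existence of partitions. Replace your conditional derivation by this convention and your proof is complete and coincides with the paper's.
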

\begin{proof}
Apply Theorem~\ref{thm:prop:cor} where $\mu_{r+1} = -(\mu_1+\cdots+\mu_r)$ and 
the net change vector 
\[\delta(\langle\mu_1,\ldots,\mu_{r}\rangle, \langle \mu_1+\cdots+\mu_r \rangle, r)\] is \[\langle \mu_{1}, \ldots, \mu_{r},0\rangle - \langle 0,\ldots,0,\mu_1+\cdots+\mu_r\rangle 
= \langle \mu_{1},  \ldots, \mu_{r},\mu_{r+1}\rangle.\qedhere\] 
\end{proof}

\begin{theorem}
\label{thm:unresA}
Let $\mu$ be a weight of the Lie algebra of type $A_r$ and let $\langle\mu_1,\ldots,\mu_{r+1}\rangle$ be its standard basis vector representation. Then
\[\wp_{A_r}(\mu) = \js(\langle\mu_1,\ldots,\mu_{r}\rangle, \langle \mu_{1}+\cdots+\mu_r \rangle, r).\]
\end{theorem}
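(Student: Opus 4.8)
The plan is to derive this enumerative identity as an immediate consequence of the bijection already established in Theorem~\ref{thm:unresAA}. The essential observation is that both sides of the claimed equality are, by definition, the cardinalities of the two sets that Theorem~\ref{thm:unresAA} places in bijection. On the left, Kostant's partition function satisfies $\wp_{A_r}(\mu) = |P_{A_r}(\mu)|$ by the very definition of $K$ as the count of partitions of $\mu$ into positive roots. On the right, $\js(\langle\mu_1,\ldots,\mu_{r}\rangle, \langle\mu_1+\cdots+\mu_r\rangle, r)$ is by definition the cardinality $|\JS(\langle\mu_1,\ldots,\mu_{r}\rangle, \langle\mu_1+\cdots+\mu_r\rangle, r)|$.

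First I would recall these two definitional identities, so that the statement is recast purely as an equality of cardinalities. Then I would invoke Theorem~\ref{thm:unresAA}, which asserts that $\Gamma$ is a bijection between $P_{A_r}(\mu)$ and $\JS(\langle\mu_1,\ldots,\mu_{r}\rangle, \langle\mu_1+\cdots+\mu_r\rangle, r)$. Since a bijection between two finite sets forces them to have equal cardinality, applying $|\cdot|$ to both the domain and codomain of $\Gamma$ yields the desired equation. Finiteness is not an issue: the set $\T_r$ of admissible throws is finite by Definition~\ref{def:T}, the sequences have fixed length $r$, so $\JS(\langle\mu_1,\ldots,\mu_{r}\rangle, \langle\mu_1+\cdots+\mu_r\rangle, r)$ is finite, and the bijection transfers this finiteness to $P_{A_r}(\mu)$.

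There is essentially no obstacle here, and in that sense this theorem is best regarded as a corollary rather than an independent result. All of the genuine combinatorial content—constructing the map $\Gamma$ on multisets (Definition~\ref{def:gamma}), verifying via the net change vector that $\Gamma$ lands in the prescribed $\JS$ set and that its inverse is well defined (Theorems~\ref{thm:netheight} and \ref{thm:prop:cor}), and identifying the correct terminal state $\langle\mu_1+\cdots+\mu_r\rangle$ (Theorem~\ref{thm:unresAA})—has already been discharged. The proof therefore consists solely of passing from the established bijection of sets to the equality of their counts, and I would keep the write-up to that single deduction.
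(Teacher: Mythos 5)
Your proposal is correct and matches the paper's treatment exactly: the paper states Theorem~\ref{thm:unresA} as an immediate enumerative consequence of the bijection $\Gamma$ established in Theorem~\ref{thm:unresAA}, giving no further argument beyond passing from the bijection of finite sets to equality of their cardinalities. Your additional remarks on finiteness and on the theorem's corollary-like status are accurate and consistent with how the paper presents it.
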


\begin{corollary}\label{cor:hrootAtoMJS}
Let $n\in\mathbb{N}$ and $\wp_{A_r}$ denote the type $A_r$ Kostant's partition function. If $\hroot_{A_r}$ is the highest root of the Lie algebra of type $A_r$, then 
$$\wp_{A_r}(n\hroot_{A_r}) = \js(\langle n \rangle, \langle n \rangle, r).$$
\end{corollary}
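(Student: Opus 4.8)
The plan is to apply Theorem~\ref{thm:unresA} directly, after first computing the standard basis vector representation of the weight $n\hroot_{A_r}$. Recall that the highest root of type $A_r$ is $\hroot_{A_r}=\alpha_1+\alpha_2+\cdots+\alpha_r$, and since $\alpha_i=\e_i-\e_{i+1}$, this sum telescopes to $\hroot_{A_r}=\e_1-\e_{r+1}$. Scaling by $n$ gives $n\hroot_{A_r}=n\e_1-n\e_{r+1}$, whose standard basis vector representation is $\langle n,0,\ldots,0,-n\rangle$ with $r+1$ entries; that is, $\mu_1=n$, $\mu_2=\cdots=\mu_r=0$, and $\mu_{r+1}=-n$.

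Next I would substitute these values into Theorem~\ref{thm:unresA}, which asserts $\wp_{A_r}(\mu)=\js(\langle\mu_1,\ldots,\mu_r\rangle,\langle\mu_1+\cdots+\mu_r\rangle,r)$. The initial juggling state is $\langle\mu_1,\ldots,\mu_r\rangle=\langle n,0,\ldots,0\rangle$, which has only zeros after its first entry; since the height of a juggling state is its largest nonzero index, this state coincides with $\langle n\rangle$. The terminal state is determined by the sum $\mu_1+\cdots+\mu_r=n+0+\cdots+0=n$, so it is $\langle n\rangle$ as well. Putting these together yields exactly $\wp_{A_r}(n\hroot_{A_r})=\js(\langle n\rangle,\langle n\rangle,r)$, as claimed.

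I do not anticipate a genuine obstacle here, as the statement is a direct specialization of Theorem~\ref{thm:unresA}. The only point meriting a word of care is the convention that trailing zero entries of a juggling state may be dropped, so that $\langle n,0,\ldots,0\rangle$ and $\langle n\rangle$ denote the same state; this is precisely what licenses the simplification of the initial state, and it is already built into the definition of the height of a juggling state given in Section~\ref{sec:background}.
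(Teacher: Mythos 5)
Your proposal is correct and matches the paper's own proof, which likewise just computes $n\hroot_{A_r} = n\e_1 - n\e_{r+1}$ and invokes Theorem~\ref{thm:unresA}; your added care about identifying $\langle n,0,\ldots,0\rangle$ with $\langle n\rangle$ is a reasonable elaboration of a point the paper leaves implicit. (The paper also records an alternate bijective proof due to Butler using torn strips of paper, but its primary proof is exactly your specialization argument.)
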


\begin{proof}
The highest root of $A_r$ is $\hroot_A = \alpha_1 + \cdots + \alpha_r$, so $n\hroot_A = n\e_1 - n\e_{r+1}$. 
\end{proof}

An alternate proof of Corollary \ref{cor:hrootAtoMJS} was proposed by Steve Butler \cite{Bprivate}; we include his beautiful combinatorial argument 
involving strips of paper. 

\begin{proof}[Alternate proof of Corollary \ref{cor:hrootAtoMJS}]
We enumerate the partitions of $n\hroot_{A_r}$ using all the positive roots of type $A_r$ in the following way. Take $n$ indistinguishable strips of paper, and write the numbers $1$ through $r$ on each of them. Each strip can be torn into pieces with tears between the numbers on the strips. Each tearing corresponds to a partition of $n\hroot_{A_r}$ in which a substrip with $i, i+1, \ldots, j$ is associated to the positive root $\alpha_i + \alpha_{i+1} + \cdots + \alpha_j$.  Any partition of $n\hroot_{A_r}$ can be uniquely associated with a multiset of torn substrips of paper made by tearing the $n$ strips of paper labeled $1$ through $r$, and vice versa.

We can also uniquely associate each element of $\JS(\langle n \rangle, \langle n \rangle, r)$ to a tearing of $n$ strips of paper labeled $1$ through $r$. Let $S = ( \bfs_0, \bfs_1, \ldots, \bfs_r )$ be any juggling sequence in $\JS(\langle n \rangle, \langle n \rangle, r)$, so that all balls are at height 1 in $\bfs_0$ and $\bfs_r$.  Associate a ball to a strip of paper. When the ball is thrown at time $i$ to height $j$ tear its associated strip of paper between numbers $i+j$ and $i+j+1$.  
Each strip of paper describes the movement of the associated ball. Thus, a multiset of torn substrips of paper made by tearing $n$ strips of paper labeled $1$ through $r$ yields a unique juggling sequence in $\JS(\langle n \rangle, \langle n \rangle, r)$. 
This proves that $\wp_{A_r}(n\hroot_{A_r}) = \js(\langle n \rangle, \langle n \rangle, r)$.
\end{proof}

\begin{example}
The partition $\{\alpha_1,\alpha_2+ \alpha_3, \alpha_4, \alpha_1 + \alpha_2, \alpha_3 + \alpha_4, \alpha_1, \alpha_2, \alpha_3, \alpha_4\}$ of $3\tilde{\alpha}_{A_4}$ corresponds to the multiset of torn strips numbered $1,2,3,4$ shown in Figure~\ref{fig:stripsToSeq}. The equivalent juggling sequence in  $\JS(\langle 3 \rangle, \langle 3 \rangle, 4)$ is also pictured.
\begin{figure}[htbp]
    \centering
\resizebox{1.75in}{!}{
    \begin{tikzpicture}
\draw (0.0,0.0) rectangle (1.0,-1.0) node[pos=.5] {1};
\draw (1.15,0.0) rectangle (2.15,-1.0) node[pos=.5] {2};
\draw (2.15,0.0) rectangle (3.15,-1.0) node[pos=.5] {3};
\draw[line width = 0.5mm,white] (2.15, 0.0) -- (2.15, -1.0);
\draw (3.3,0.0) rectangle (4.3,-1.0) node[pos=.5] {4};

\draw (0.0,-1.1) rectangle (1.0,-2.1) node[pos=.5] {1};
\draw (1.0,-1.1) rectangle (2.0,-2.1) node[pos=.5] {2};
\draw[line width = 0.5mm,white] (1.0, -1.1) -- (1.0, -2.1);
\draw (2.3,-1.1) rectangle (3.3,-2.1) node[pos=.5] {3};
\draw (3.3,-1.1) rectangle (4.3,-2.1) node[pos=.5] {4};
\draw[line width = 0.5mm,white] (3.3, -1.1) -- (3.3, -2.1);

\draw (0.0,-2.2) rectangle (1.0,-3.2) node[pos=.5] {1};
\draw (1.1,-2.2) rectangle (2.1,-3.2) node[pos=.5] {2};
\draw (2.2,-2.2) rectangle (3.2,-3.2) node[pos=.5] {3};
\draw (3.3,-2.2) rectangle (4.3,-3.2) node[pos=.5] {4};
\end{tikzpicture}}\\\vspace{2mm}
\begin{tikzpicture}
    \draw[<->][line width=0.45mm] (6.35, 2) -- (6.35, 2.75);
\end{tikzpicture}\\\vspace{2mm}
\begin{tikzpicture}
\draw (0,-1.5) -- (0,.5);
\draw (0, -1.0) -- (1.0, -1.0) -- (1.5, -0.5);
\fill (0.2,-0.8) circle (3pt);
\fill (0.5,-0.8) circle (3pt);
\fill (0.8,-0.8) circle (3pt);
\draw (0, -0.5) -- (1.0, -0.5) -- (1.5, 0.0);
\draw (0, 0.0) -- (1.0, 0.0) -- (1.5, 0.5);
\draw (0, -2.0) node {$\langle3\rangle$};
\draw[->] (1.8,0) -- (2.3,0);
\draw (2.8,-1.5) -- (2.8,.5);
\draw (2.8, -1.0) -- (3.8, -1.0) -- (4.3, -0.5);
\fill (3.0,-0.8) circle (3pt);
\fill (3.3,-0.8) circle (3pt);
\draw (2.8, -0.5) -- (3.8, -0.5) -- (4.3, 0.0);
\fill (3.0,-0.3) circle (3pt);
\draw (2.8, 0.0) -- (3.8, 0.0) -- (4.3, 0.5);
\draw (2.8, -2.0) node {$\langle2,1\rangle$};
\draw[->] (4.6,0) -- (5.1,0);
\draw (5.6,-1.5) -- (5.6,.5);
\draw (5.6, -1.0) -- (6.6, -1.0) -- (7.1, -0.5);
\fill (5.8,-0.8) circle (3pt);
\fill (6.1,-0.8) circle (3pt);
\draw (5.6, -0.5) -- (6.6, -0.5) -- (7.1, 0.0);
\fill (5.8,-0.3) circle (3pt);
\draw (5.6, 0.0) -- (6.6, 0.0) -- (7.1, 0.5);
\draw (5.6, -2.0) node {$\langle2,1\rangle$};
\draw[->] (7.4,0) -- (7.9,0);
\draw (8.4,-1.5) -- (8.4,.5);
\draw (8.4, -1.0) -- (9.4, -1.0) -- (9.9, -0.5);
\fill (8.6,-0.8) circle (3pt);
\fill (8.9,-0.8) circle (3pt);
\draw (8.4, -0.5) -- (9.4, -0.5) -- (9.9, 0.0);
\fill (8.6,-0.3) circle (3pt);
\draw (8.4, 0.0) -- (9.4, 0.0) -- (9.9, 0.5);
\draw (8.4, -2.0) node {$\langle2,1\rangle$};
\draw[->] (10.2,0) -- (10.7,0);
\draw (11.2,-1.5) -- (11.2,.5);
\draw (11.2, -1.0) -- (12.2, -1.0) -- (12.7, -0.5);
\fill (11.4,-0.8) circle (3pt);
\fill (11.7,-0.8) circle (3pt);
\fill (12.0,-0.8) circle (3pt);
\draw (11.2, -0.5) -- (12.2, -0.5) -- (12.7, 0.0);
\draw (11.2, 0.0) -- (12.2, 0.0) -- (12.7, 0.5);
\draw (11.2, -2.0) node {$\langle3\rangle$};
\end{tikzpicture}
    \caption{Illustration of the association of a set of torn strips of paper to a  juggling sequence in $\JS(\langle 3 \rangle, \langle 3 \rangle, 4)$. }
    \label{fig:stripsToSeq}
\end{figure}
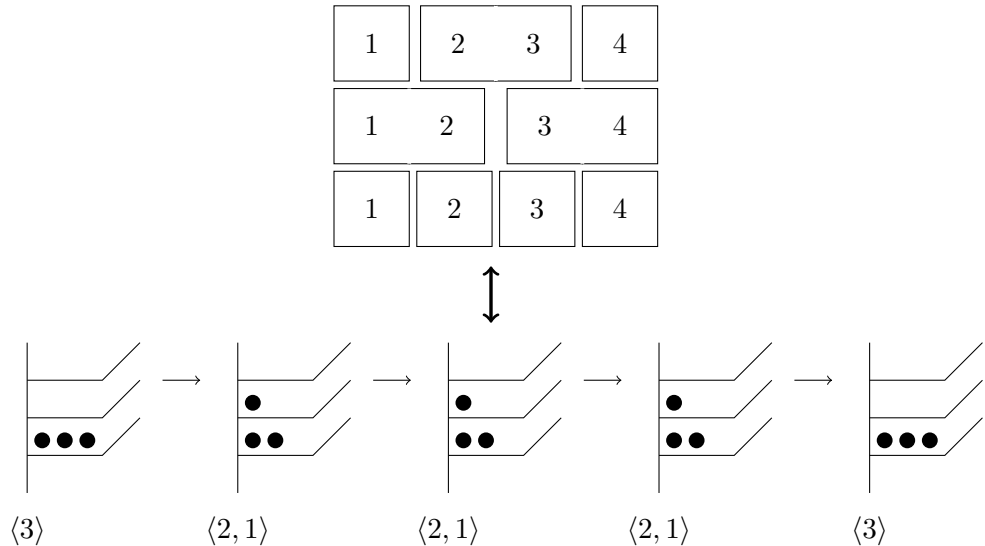
\end{example}

\section{Bijection restrictions}\label{sec:restrict}
We gain further insights about both Kostant's partition functions and juggling sequences by understanding restrictions to $\Gamma$ from Section~\ref{sec:bijection}.  
A key insight is that a restriction on the set of roots that can appear in a partition corresponds to a restriction on which throws are allowed in a juggling sequence.  Theorem~\ref{thm:restrictparts} explains which juggling sequences correspond to a restriction of the positive roots for Kostant's partition function while Theorem~\ref{thm:jugtopart} gives the restricted partition function that corresponds to enumerating general juggling sequences.

\begin{definition}
Let $\T\subseteq \T_r$ be a set of allowed throws.  Define $\JS_\T(\bfa,\bfb,n,m)$ to be the set of juggling sequences that only use allowed throws and $\js_\T(\bfa,\bfb,n,m)=\lvert\JS_\T(\bfa,\bfb,n,m)\rvert$.
\end{definition}

\begin{theorem}\label{thm:restrictparts}
Let $\mu$ be a weight of the Lie algebra of type $A_r$ and let $\Lambda\subseteq \Phi_{A_r}^+$. Then $P_\Lambda(\mu)$ is in bijection with $\JS_{\Gamma(\Lambda)}(\langle \mu_{1}, \mu_{2}, \ldots, \mu_{r} \rangle, \langle  \mu_{1}+\mu_{2}+ \cdots+ \mu_{r}  \rangle, r)$ and
\[K_\Lambda(\mu)=\js_{\Gamma(\Lambda)}(\langle \mu_{1}, \mu_{2}, \ldots, \mu_{r} \rangle, \langle \mu_{1}+\mu_{2}+ \cdots+ \mu_{r} \rangle, r).\]
\end{theorem}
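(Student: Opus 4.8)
The plan is to obtain the statement by simply restricting the bijection $\Gamma$ of Theorem~\ref{thm:unresAA} to the subsets of interest. Writing $\langle\mu_1,\ldots,\mu_{r+1}\rangle$ for the standard basis representation of $\mu$ (so $\mu_{r+1}=-(\mu_1+\cdots+\mu_r)$), Theorem~\ref{thm:unresAA} already furnishes a bijection $\Gamma\colon P_{A_r}(\mu)\to\JS(\langle\mu_1,\ldots,\mu_r\rangle,\langle\mu_1+\cdots+\mu_r\rangle,r)$. Since $P_\Lambda(\mu)\subseteq P_{A_r}(\mu)$, and by definition $\JS_{\Gamma(\Lambda)}(\langle\mu_1,\ldots,\mu_r\rangle,\langle\mu_1+\cdots+\mu_r\rangle,r)\subseteq\JS(\langle\mu_1,\ldots,\mu_r\rangle,\langle\mu_1+\cdots+\mu_r\rangle,r)$, it is enough to check that $\Gamma$ maps the first set onto the second; injectivity and the fact that the image is a partition of $\mu$ are then inherited directly from Theorem~\ref{thm:unresAA}.

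First I would verify the forward inclusion. Let $p\in P_\Lambda(\mu)$, so by definition every positive root $\beta\in p$ belongs to $\Lambda$. By Definition~\ref{def:gamma}, the multiset $\Gamma(p)$ is exactly $\{\Gamma(\beta):\beta\in p\}$, and each throw $\Gamma(\beta)$ lies in $\Gamma(\Lambda)$. Hence every throw occurring in the juggling sequence $\Gamma(p)$ is an allowed throw, which is precisely the condition for $\Gamma(p)\in\JS_{\Gamma(\Lambda)}(\langle\mu_1,\ldots,\mu_r\rangle,\langle\mu_1+\cdots+\mu_r\rangle,r)$. Conversely, given $S$ in this restricted set, Theorem~\ref{thm:unresAA} produces a unique $p=\Gamma^{-1}(S)\in P_{A_r}(\mu)$ whose roots are the images $\Gamma^{-1}(T)$ of the throws $T$ appearing in $S$. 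Because $S$ uses only throws in $\Gamma(\Lambda)$ and the underlying map $\Gamma\colon\Phi_{A_r}^+\to\T_r$ is a bijection, each of these roots lies in $\Gamma^{-1}(\Gamma(\Lambda))=\Lambda$, so $p\in P_\Lambda(\mu)$. This establishes the claimed bijection, and taking cardinalities gives $K_\Lambda(\mu)=\js_{\Gamma(\Lambda)}(\langle\mu_1,\ldots,\mu_r\rangle,\langle\mu_1+\cdots+\mu_r\rangle,r)$.

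The argument is almost entirely formal given Theorem~\ref{thm:unresAA}, and the only point demanding a moment's care is the identity $\Gamma^{-1}(\Gamma(\Lambda))=\Lambda$ at the level of individual positive roots, which guarantees that the membership restriction transfers cleanly in both directions. This is immediate from the bijectivity of $\Gamma\colon\Phi_{A_r}^+\to\T_r$ established earlier, so there is no substantive obstacle; the content of the theorem lies in recognizing that restricting the allowed roots on the partition side corresponds exactly, under $\Gamma$, to restricting the allowed throws on the juggling side.
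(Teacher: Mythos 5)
Your proposal is correct and takes essentially the same route as the paper: the paper's own (one-sentence) proof restricts the bijection of Theorem~\ref{thm:prop:cor} by observing that excluding a root $\e_i-\e_{i+j}$ from partitions is the same as forcing the multiplicity $m(i,i+j)$ to vanish, i.e.\ forbidding the throw $T_{i,j}$, which is exactly your correspondence between banned roots and banned throws. Your write-up merely spells out both inclusions (using $\Gamma^{-1}(\Gamma(\Lambda))=\Lambda$ from the bijectivity of $\Gamma$ on $\Phi_{A_r}^+$) that the paper leaves implicit.
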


Note that Theorem~\ref{thm:prop:cor} is a special case of Theorem~\ref{thm:restrictparts} when $\Lambda=\Phi_{A_r}^+$.

\begin{proof}
The restriction that a root $\e_i-\e_{i+j}$ not be allowed in the partition is equivalent to the condition that $m(i,i+j)$ must be zero in the proof of Theorem~\ref{thm:prop:cor}, which is the same as requiring that no balls are thrown at time $i$ to level $j$.
\end{proof}

Theorem~\ref{thm:restrictparts} answers a question of Butler and Graham \cite[Section 4.2]{BG}, in which they wish to incorporate a restriction of the maximum height to which any ball is thrown.  If we restrict our throws to height $h$, this corresponds to restricting the positive roots to be of the form $\e_i-\e_{i+j}$ where $j\leq h$.  

We present an example where roots are restricted to be either simple roots or the sum of three consecutive simple roots.

\begin{example}
Let $\Lambda = \{\alpha_1, \alpha_2, \alpha_3, \alpha_4, \alpha_1 + \alpha_2 + \alpha_3, \alpha_2 + \alpha_3 + \alpha_4\} \subseteq \Phi_{A_4}^+$. Then $\T=\Gamma(\Lambda)$ is the set of throws at any time to heights $1$ or $3$.

Let $\mu = \alpha_1 + 2\alpha_2 + 2\alpha_3 + \alpha_4$.  Then $K_\Lambda(\mu)=4$ because the partitions of $\mu$ using the positive roots in $\Lambda$ are 
\begin{gather*}
 \{\alpha_1,\,\alpha_2,\,\alpha_2,\,\alpha_3,\,\alpha_3,\,\alpha_4\},\,
\{\alpha_1 + \alpha_2 + \alpha_3,\,\alpha_2 + \alpha_3 + \alpha_4\}, \\
   \{\alpha_1 + \alpha_2 + \alpha_3,\,\alpha_2,\,\alpha_3,\,\alpha_4\}, \textup{ and }
    \{\alpha_1,\,\alpha_2,\,\alpha_3,\,\alpha_2 + \alpha_3 + \alpha_4\}.
\end{gather*}

Since 
\[\mu = \alpha_1 + 2\alpha_2 + 2\alpha_3 + \alpha_4 = \langle 1, 1,0,-1,-1 \rangle,\]
Theorem~\ref{thm:restrictparts} implies that $\js_{\T}(\langle 1, 1, 0, -1 \rangle, \langle 1 \rangle, 4)=4$.  The four juggling sequences are
\begin{gather*}
    (\langle 1, 1, 0, -1 \rangle, \langle 2, 0, -1 \rangle, \langle 2, -1 \rangle, \langle 1 \rangle, \langle 1 \rangle), (\langle 1, 1, 0, -1 \rangle, \langle 2, 0, -1 \rangle, \langle 1, -1, 1 \rangle, \langle 0, 1 \rangle, \langle 1 \rangle), \\
    (\langle 1, 1, 0, -1 \rangle, \langle 1 \rangle, \langle 0,0,1 \rangle, \langle 0, 1 \rangle, \langle 1 \rangle), \textup{ and }
    (\langle 1, 1, 0, -1 \rangle, \langle 1 \rangle, \langle 1 \rangle, \langle 1 \rangle, \langle 1 \rangle).
\end{gather*}
The reader can verify that the only throws are of height one and three.
\end{example}

We now consider a general juggling sequence question and find the corresponding partition function question. 

\begin{theorem}\label{thm:jugtopart}
Consider juggling states $\bfa=\langle a_1,\ldots,a_s\rangle$ and $\bfb=\langle b_1,\ldots,b_t\rangle$, satisfying \[a_1+\cdots+a_s = b_1+\cdots+b_t,\] and positive integers $n$ and $m$. Let
\[
 \delta=a_1\e_1+\cdots+a_s\e_s-(b_1\e_{n+1}+\cdots + b_t\e_{n+t})
\]
and $r+1$ be the dimension of this vector. Further, let
\[
\Lambda = \big\{\e_i-\e_j\in\Phi_{A_r}^+: 1\leq i < j\leq r+1 \mbox{ and } i\leq n\big\},
\]
and define
\begin{align}
    Q_\Lambda(\delta) := \{p\in P_\Lambda(\delta): \text{for all $j > 0$, } a_j + \sum_{\e_i-\e_j\in p} 1 \leq m\}. \label{PT'}
\end{align}
Then $\Gamma$ is a bijection between the set $Q_\Lambda(\delta)$ and $\JS(\bfa,\bfb,n,m)$ and therefore $\js(\bfa,\bfb,n,m) = |Q_\Lambda(\delta)|$. When there is no hand capacity constraint, then $Q_\Lambda(\delta)=P_\Lambda(\delta)$ and $\js(\bfa,\bfb,n) = K_\Lambda(\delta)$.
\end{theorem}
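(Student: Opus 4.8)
The plan is to follow the template of the proof of Theorem~\ref{thm:prop:cor}: since $\Gamma$ is already known to be a bijection between multisets of positive roots of $\Phi_{A_r}^+$ and multisets of throws in $\T_r$, it suffices to check the two inclusions $\Gamma(Q_\Lambda(\delta))\subseteq\JS(\bfa,\bfb,n,m)$ and $\Gamma^{-1}(\JS(\bfa,\bfb,n,m))\subseteq Q_\Lambda(\delta)$. Together these force $\Gamma$ to restrict to a bijection on the stated domains, giving $\js(\bfa,\bfb,n,m)=|Q_\Lambda(\delta)|$.

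\emph{Forward direction.} Given $p\in Q_\Lambda(\delta)$ with multiplicities $m(i,j)$, I build the states $\bfs_0,\ldots,\bfs_n$ by the telescoping formula of Equation~\eqref{map:partition2jugglingsequence} restricted to times $0\le i\le n$, so that the throws performed are exactly $\Gamma(p)$ and consecutive states obey Equation~\eqref{eq:successive}. That $\bfs_0=\bfa$ is built into the formula. That $\bfs_n=\bfb$ follows from the bookkeeping of Theorem~\ref{thm:netheight}: the balls at height $k$ at time $n$ are exactly those whose throw corresponds to a root of $p$ ending in $\e_{n+k}$, and there are $b_k$ of these because $p$ partitions $\delta=\sum a_i\e_i-\sum b_k\e_{n+k}$. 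That the length is $n$ is immediate from $\Lambda$, since every allowed root $\e_i-\e_j$ has $i\le n$ and $\Gamma$ sends it to a throw at time $i$, so no throw occurs after time $n$.

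\emph{The crux} is the hand-capacity condition, the genuinely new ingredient beyond Theorems~\ref{thm:prop:cor} and \ref{thm:restrictparts}. The defining inequalities of $Q_\Lambda(\delta)$ constrain only the height-one entries: reading off Equation~\eqref{map:partition2jugglingsequence}, the number of balls at height one at time $j-1$ equals $a_j+\sum_{\e_i-\e_j\in p}1$, so the membership condition says precisely that every height-one entry of every state, together with every entry of the terminal state $\bfb$, is at most $m$. I then upgrade this to the full statement that \emph{every} entry of every state is at most $m$ using a monotonicity observation taken directly from Equation~\eqref{eq:successive}: one always has $(\bfs_{i+1})_{k-1}=(\bfs_i)_k+(\text{number of balls thrown to height }k-1)\ge(\bfs_i)_k$, since thrown balls are non-magic and contribute nonnegatively. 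Iterating, each entry $(\bfs_i)_k$ is dominated either by a later height-one entry $(\bfs_{i+k-1})_1$ (when $i+k-1\le n$) or by a terminal entry $(\bfs_n)_{i+k-n}$, all of which are bounded by $m$. This also shows that magic balls cannot sabotage the bound, as a net non-magic (positive) entry only grows while descending and therefore never mixes with a magic ball on the way down.

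\emph{Reverse direction.} Given $S\in\JS(\bfa,\bfb,n,m)$, set $p=\Gamma^{-1}$ of its multiset of throws. Theorem~\ref{thm:netheight} gives that $p$ partitions $\delta$; every throw of $S$ occurs at a time at most $n$, so every root of $p$ lies in $\Lambda$; and the hand-capacity bound on $S$ specializes at height one (and on the terminal state) to exactly the inequalities $a_j+\sum_{\e_i-\e_j\in p}1\le m$, so $p\in Q_\Lambda(\delta)$. This completes the bijection. Finally, removing the hand-capacity constraint (or taking $m$ at least the number of non-magic balls) makes every inequality in the definition of $Q_\Lambda(\delta)$ automatic, so $Q_\Lambda(\delta)=P_\Lambda(\delta)$ and $\js(\bfa,\bfb,n)=K_\Lambda(\delta)$.
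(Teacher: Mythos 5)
Your proposal is correct and takes essentially the same approach as the paper's proof: both verify that $\Gamma$ and $\Gamma^{-1}$ are well defined on the stated domains via the construction in Equation~\eqref{map:partition2jugglingsequence} and Theorem~\ref{thm:netheight}, and both identify the inequalities defining $Q_\Lambda(\delta)$ with the hand-capacity constraint through the same diagonal bookkeeping (your monotonicity-along-diagonals argument merely makes explicit what the paper asserts tersely). The one blemish is your claim that the balls at height $k$ at time $n$ are \emph{exactly} those thrown onto diagonal $n+k$: when $s>n$ one must also count the $a_{n+k}$ balls that started at height $n+k$ and were never thrown, and the partition condition then gives $a_{n+k}+\sum_{\e_i-\e_{n+k}\in p}1=b_k$, exactly as in the paper's formula for the terminal state.
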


\begin{proof}
Similar to the proof of Theorem~\ref{thm:prop:cor}, we show that both $\Gamma$ and its inverse are well defined on their domains.

We first show $\Gamma$ is well defined. Let $p\in Q_\Lambda(\delta)$ with parts $\e_i-\e_j$ satisfying
\begin{enumerate}
    \item $1\leq i<j\leq r+1$ and $i\leq n$
    \item for all $j>0$, $a_j+\sum_{\e_i-\e_j\in p}1\leq m$.
\end{enumerate}
Now, similar to construction of $\bfs_i$ in Equation~\eqref{map:partition2jugglingsequence}, create a juggling sequence $S$ starting at $\bfa$ of length $n$, whose throws are at time $i$ to height $j-i$ for all $\e_i-\e_j\in p$. The terminal state of $S$ is given by $\bfs=\langle s_n,s_{n+1},\ldots,s_{n+t-1}\rangle$ where $s_i=a_i+\sum_{\e_i-\e_j\in p} 1=b_i$ since $p$ is a partition of $\delta$. The condition in item (2) restricts the number of balls present at any time and at any height to be less than or equal to $m$.  Therefore $S\in \JS(\bfa,\bfb,n,m)$.

We now show that $\Gamma^{-1}$ is well defined. By construction, $\delta=\delta(\bfa,\bfb,n,m)$ is the net change vector of every juggling sequence in $\JS(\bfa,\bfb,n,m)$.  Every juggling sequence $S$ in  $\JS(\bfa,\bfb,n,m)$ corresponds to a multiset of throws, where each of the throws is at a time $i$ between $1$ and $n$ to a height of at most $r+1-i$ and where at most $m$ non-magic balls are at any height at any time.  The condition on throws implies that $\Gamma^{-1}$ takes $\JS(\bfa,\bfb,n,m)$ into $P_\Lambda(\delta)$. The hand capacity condition shows that $\Gamma^{-1}(\JS(\bfa,\bfb,n,m))\subseteq Q_{\Lambda}(\delta)$ because the number of balls at height $h$ at time $j-h$ is the number of balls that started at height $j$ (which is negative if they are magic) plus the number of balls that were thrown to height $k$ at time $j-k$ for $1\leq k<j-h$.  Under $\Gamma^{-1}$, these throws in $S$ are of the form $(\e_i-\e_j)$ for $i<j$.  The hand capacity restriction on $S$ is exactly the restriction on the number of times a positive root can be used, which defines $Q_\Lambda(\delta)$. 

When there is no hand capacity restriction,  $Q_{\Lambda}(\delta)=P_{\Lambda}(\delta)$, so $\Gamma$ becomes a bijection between $P_{\Lambda}(\delta)$ and $\JS(\bfa,\bfb,n)$ and we conclude that $\js(\bfa,\bfb,n)=K_\Lambda(\delta)$.
\end{proof}

Butler and Graham give generating functions for the number of some periodic  juggling sequences, which are juggling sequences whose terminal state is the same as its initial state. With Theorem \ref{thm:jugtopart} at hand, we now present the weights of a Lie algebra of type $A_r$ to which these juggling sequences correspond. This is presented in Table \ref{tab:BG}, with an additional column listing the corresponding weights, keeping in mind the restriction to the partition function we are using, as described in Equation \eqref{PT'}. 
By extracting coefficients from the generating functions we are able to give some closed partition function formulas.

\begin{table}[htbp]
    \centering
    \resizebox{\textwidth}{!}{
    \begin{tabular}{cccr}
        \hline
         State  &   $m$  &    Generating Function &   Weight\\
         \hline
         $\langle 2 \rangle$  &   $2$ &$\displaystyle\frac{x - 2x^2}{1-5x+5x^2}$    &
         $2\alpha_1 + \cdots + 2\alpha_n$
         \\

         $\langle1,1\rangle$  &   $2$ &$\displaystyle\frac{x-2x^2+x^3}{1-5x+5x^2}$  &
         $\begin{cases}
         \alpha_1 + \alpha_2  &\mbox{ if $n=1$}\\
         \alpha_1 + 2\alpha_2 + \cdots + 2\alpha_n + \alpha_{n+1} & \mbox{if $n\geq 2$}
         \end{cases}$\\

         $\langle2,1\rangle$  &   $2$ &$\displaystyle\frac{x-4x^2+3x^3}{1-8x+13x^2}$&
         
         $\begin{cases}
         2\alpha_1 + \alpha_2 & \mbox{if $n=1$}\\
         2\alpha_1+3\alpha_2+\cdots+3\alpha_n+\alpha_{n+1} & \mbox{if $n\geq 2$}
         \end{cases}$\\

         $\langle1,1,1\rangle$&   $2$ &$\displaystyle\frac{x-5x^2+7x^3}{1-8x+13x^2}$&
         $\begin{cases}
         \alpha_1 + \alpha_2 + \alpha_3   &   \text{if $n=1$}\\
         \alpha_1 + 2\alpha_2 + 2\alpha_3 + \alpha_4  & \text{if $n=2$}\\
         \alpha_1 + 2\alpha_2 + 3\alpha_3 + \cdots + 3\alpha_{n} + 2\alpha_{n+1} + \alpha_{n+2} & \text{if $n\geq 3$}
         \end{cases}$\\

         $\langle2,2\rangle$  &   $2$ &$\displaystyle\frac{x-11x^2+33x^3-27x^4}{1-14x+54x^2-57x^3}$&
         $\begin{cases}
         2\alpha_1 + 2\alpha_2 & \text{if $n=1$}\\
         2\alpha_1 + 4\alpha_2 + \cdots + 4\alpha_n + 2\alpha_{n+1} & \text{if $n\geq 2$}
         \end{cases}$\\

         $\langle3\rangle$    &   $3$ &$\displaystyle\frac{x-6x^2+7x^3}{1-10x+27x^2-20x^3}$&
         $3\alpha_1 + \cdots + 3\alpha_n$\\

         $\langle2,1\rangle$  &   $3$ &$\displaystyle\frac{x-5x^2+7x^3-3x^4}{1-10x+27x^2-20x^3}$&
         $\begin{cases}
         \alpha_1 + \alpha_2 & \text{if $n=1$}\\
         \alpha_1 + 2\alpha_2 + \cdots + 2\alpha_n + \alpha_{n+1} & \text{otherwise}
         \end{cases}$\\
         
         $\langle4 \rangle$    &   $4$ &$\displaystyle\frac{x-15x^2 + 70x^3-127x^4+78x^5}{1-20 x + 135 x^2 -396 x^3 + 518 x^4 - 245 x^5}$&
         $4\alpha_1 + \cdots + 4\alpha_n$\\[10pt]
         
         $\langle5 \rangle$    &   $5$ &$\displaystyle\frac{x-30x^2+320x^3-1604x^4+4059x^5-4970x^6+2320x^7}{1-36 x + 480 x^2 -3140 x^3 + 11059 x^4 - 21180 x^5 + 20560 x^6 -7840 x^7}\hspace{-1.75in}$&
         $5\alpha_1 + \cdots + 5\alpha_n$\\[10pt]
         \hline
    \end{tabular}
    }
    \caption{Number of periodic sequences of length $n$ for a set of specific initial and terminal states, with their corresponding weights. The first three columns in this table first appeared in~\cite{BG}. The denominators of the generating functions for the states $\langle n \rangle$  appeared in \cite{CRY}, the numerators for $n=4,5$ were then obtained from initial terms computed with the code in \cite{BBCV}.}
    \label{tab:BG}
\end{table}

In what follows, for $\Lambda\subseteq\Phi_{A_r}^+$, we let 
\[
    Q_{\Lambda,m}(\mu) = \{p\in P_{\Lambda}(\mu): \text{for all $j > 0$, } a_j + \sum_{\e_i-\e_j\in p} 1 \leq m\}. 
\]

\begin{corollary}
If $r\geq 2$, then
\[K_{A_r}(2\a_1+2\a_2+\cdots+2\a_r)=
\frac{1}{5\cdot 2^{r}} 
\big((5-\sqrt{5})^r+(5+\sqrt{5})^r\big).\]
\end{corollary}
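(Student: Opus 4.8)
The plan is to reduce the computation of $K_{A_r}(2\alpha_1+\cdots+2\alpha_r)$ to counting periodic juggling sequences, and then to extract the coefficient of a generating function already recorded in Table~\ref{tab:BG}. First I would compute the standard basis representation of the weight. Since $\alpha_i=\e_i-\e_{i+1}$, the sum $2\alpha_1+\cdots+2\alpha_r$ telescopes to $2\e_1-2\e_{r+1}$; equivalently, this weight is $2\hroot_{A_r}$, twice the highest root. Hence $\langle\mu_1,\ldots,\mu_{r+1}\rangle=\langle 2,0,\ldots,0,-2\rangle$ and $\mu_1+\cdots+\mu_r=2$.

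Next I would invoke Corollary~\ref{cor:hrootAtoMJS} with $n=2$ (equivalently Theorem~\ref{thm:unresA}) to obtain
\[ K_{A_r}(2\hroot_{A_r}) = \js(\langle 2\rangle, \langle 2\rangle, r). \]
Because there are only two balls, the hand-capacity constraint $m=2$ is vacuous, so this count equals $\js(\langle 2\rangle,\langle 2\rangle,r,2)$, the number of periodic sequences recorded in the first row of Table~\ref{tab:BG}. Reading off that row gives
\[ \sum_{r\geq 1}\js(\langle 2\rangle,\langle 2\rangle,r,2)\,x^r = \frac{x-2x^2}{1-5x+5x^2}. \]

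Finally I would extract the coefficient by matching against the generating function of the proposed closed form. Let $\lambda_\pm=\tfrac{5\pm\sqrt5}{2}$ be the roots of $t^2-5t+5$, so that $\lambda_++\lambda_-=5$ and $\lambda_+\lambda_-=5$, and set $g(r)=\tfrac{1}{5}(\lambda_-^r+\lambda_+^r)$, which is exactly the proposed value $\tfrac{1}{5\cdot 2^{r}}\big((5-\sqrt5)^r+(5+\sqrt5)^r\big)$. A direct partial-fractions computation gives
\[ \sum_{r\geq 0} g(r)\,x^r = \frac{1}{5}\left(\frac{1}{1-\lambda_- x}+\frac{1}{1-\lambda_+ x}\right) = \frac{2-5x}{5(1-5x+5x^2)}. \]
Subtracting the constant term $g(0)=\tfrac{2}{5}$ leaves precisely $\tfrac{x-2x^2}{1-5x+5x^2}$, which agrees with the generating function above. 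Matching coefficients for $r\geq 1$ then yields $\js(\langle 2\rangle,\langle 2\rangle,r,2)=g(r)$, in particular for all $r\geq 2$.

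The computation is essentially routine once the reduction is in place, so I do not expect a serious obstacle; the points requiring care are confirming that the weight is twice the highest root (so that Corollary~\ref{cor:hrootAtoMJS} applies with both initial and terminal states equal to $\langle 2\rangle$), verifying that the hand-capacity parameter $m=2$ in Table~\ref{tab:BG} is inert for two balls, and correctly identifying that the roots of the denominator $1-5x+5x^2$ produce the $5\pm\sqrt5$ appearing in the stated answer.
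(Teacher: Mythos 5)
Your proof is correct and takes essentially the same route as the paper: the corollary there is obtained by identifying the weight $2\alpha_1+\cdots+2\alpha_r$ with the first row of Table~\ref{tab:BG} (periodic sequences with state $\langle 2\rangle$, where the hand capacity $m=2$ is vacuous since only two balls are juggled) and extracting coefficients from the Butler--Graham generating function $\frac{x-2x^2}{1-5x+5x^2}$. Your partial-fractions computation is exactly the routine coefficient extraction the paper leaves implicit, and it checks out.
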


\begin{remark} \label{rem: cry recurrence}
For a positive integer $n$, in \cite[Thm. 1]{CRY} it was shown that the sequence $\{K_{A_r}( n\hroot_{A_r})\}_{r\geq 1}$ satisfies a linear recurrence of degree equal to the number of integer partitions of $n$. It would be of interest to find the generating function for this sequence for any fixed $r$. (See Table~\ref{tab:BG} and Section~\ref{sec:genfunctions}.)
\end{remark}

\begin{corollary}
Let $r\geq 2$. If  $\Lambda=\Phi_{A_r}^+\setminus\{\a_r\}$, then 
\[\big\lvert Q_{\Lambda,2}(\a_1+2\a_2+\cdots+2\a_{r-1}+\a_r)\big\rvert=\frac{\left(\sqrt{5}-3\right) \left(5-\sqrt{5}\right)^{r-1}+\left(\sqrt{5}+3\right) \left(5+\sqrt{5}\right)^{r-1}}{5 \sqrt{5}\cdot 2^{r} }.
\]
\end{corollary}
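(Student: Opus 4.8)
The plan is to identify the left-hand side as an enumeration of periodic juggling sequences through Theorem~\ref{thm:jugtopart}, and then to extract a coefficient from the matching generating function in Table~\ref{tab:BG}. First I would rewrite the weight in the standard basis: since $\alpha_i=\e_i-\e_{i+1}$, a telescoping computation gives
\[
\mu:=\alpha_1+2\alpha_2+\cdots+2\alpha_{r-1}+\alpha_r=\e_1+\e_2-\e_r-\e_{r+1}.
\]
By Theorem~\ref{thm:netheight} this is exactly the net change vector of a periodic sequence with $\bfa=\bfb=\langle 1,1\rangle$ and length $n$, namely $\e_1+\e_2-\e_{n+1}-\e_{n+2}$; comparing dimensions forces $r+1=n+2$, so $n=r-1$. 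The restriction $\Lambda=\{\e_i-\e_j\in\Phi_{A_r}^+:i\leq n\}$ appearing in Theorem~\ref{thm:jugtopart} then excludes only $\e_r-\e_{r+1}=\alpha_r$, so it coincides with $\Lambda=\Phi_{A_r}^+\setminus\{\alpha_r\}$, and the hand-capacity set $Q_{\Lambda,2}(\mu)$ defined before the corollary is precisely the set $Q_\Lambda(\delta)$ of that theorem with $m=2$ and $\bfa=\langle 1,1\rangle$. Hence Theorem~\ref{thm:jugtopart} yields $\lvert Q_{\Lambda,2}(\mu)\rvert=\js(\langle 1,1\rangle,\langle 1,1\rangle,r-1,2)$.

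Next I would read off from the second row of Table~\ref{tab:BG} that
\[
\sum_{n\geq 0}\js(\langle 1,1\rangle,\langle 1,1\rangle,n,2)\,x^n=\frac{x-2x^2+x^3}{1-5x+5x^2},
\]
so that the desired number is the coefficient of $x^{r-1}$ on the right. The denominator $1-5x+5x^2$ has reciprocal roots $\lambda_\pm=\tfrac{5\pm\sqrt5}{2}$, the two roots of $\lambda^2-5\lambda+5$. A partial-fraction decomposition then expresses the coefficient as $A\lambda_+^{\,r-1}+B\lambda_-^{\,r-1}$; clearing the factor $2^{r-1}$ turns $\lambda_\pm^{\,r-1}$ into $(5\pm\sqrt5)^{r-1}$, and solving for the constants $A,B$ should reproduce the numerators $\sqrt5\pm3$ together with the normalization $5\sqrt5\cdot 2^{r}$ in the claimed formula.

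The main obstacle is this coefficient extraction, and in particular the fact that the numerator has degree $3$ while the denominator has degree $2$. Because the rational function is improper, its power-series coefficients $f_n$ satisfy the recurrence $f_n=5f_{n-1}-5f_{n-2}$ only once $n$ exceeds the numerator degree, that is for $n\geq 4$, so the two-term closed form cannot govern every coefficient. I would therefore fix $A$ and $B$ by matching the seed values $f_2=3$ and $f_3=11$ and propagate upward by the recurrence, which establishes the identity for all $r-1\geq 2$, i.e.\ $r\geq 3$; the remaining smallest case $r=2$ (where $n=1$) must be verified directly, and here the improper part of the fraction contributes a genuine correction. This is the one place where the argument differs from the proof of the preceding corollary: there the generating function $\tfrac{x-2x^2}{1-5x+5x^2}$ has numerator and denominator of equal degree, so its correction affects only $n=0$, which lies outside the relevant range, whereas the extra degree here pushes the correction up to $n=1$ and hence to the boundary value $r=2$.
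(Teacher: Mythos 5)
Your argument is in fact the paper's own: the paper offers no proof of this corollary beyond pointing at Theorem~\ref{thm:jugtopart} and the $\langle 1,1\rangle$, $m=2$ row of Table~\ref{tab:BG}, and your reduction $\lvert Q_{\Lambda,2}(\mu)\rvert=\js(\langle 1,1\rangle,\langle 1,1\rangle,r-1,2)$ --- including the check that the $\Lambda$ of Theorem~\ref{thm:jugtopart} with $n=r-1$ omits exactly $\a_r$ --- is carried out correctly, with coefficient extraction as the intended finish.

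Where you depart from the paper is in your care with the improper fraction, and here you are right and the paper is wrong: this is not a boundary case ``to be verified directly'' but a counterexample to the stated range $r\geq 2$. Polynomial division gives
\[
\frac{x-2x^2+x^3}{1-5x+5x^2}=\frac{x-1}{5}+\frac{1}{5}\cdot\frac{1-x}{1-5x+5x^2},
\]
so the two-term exponential form can agree with the coefficients only for $n\geq 2$, i.e.\ $r\geq 3$; your seeds $f_2=3$, $f_3=11$ together with the recurrence $f_n=5f_{n-1}-5f_{n-2}$ (valid for $n\geq 4$) then prove the identity for all $r\geq 3$. At $r=2$ the claimed formula evaluates to
\[
\frac{\left(\sqrt5-3\right)\left(5-\sqrt5\right)+\left(\sqrt5+3\right)\left(5+\sqrt5\right)}{5\sqrt5\cdot 2^2}=\frac{16\sqrt5}{20\sqrt5}=\frac45,
\]
while the true value is $1$: for $r=2$ the weight is $\a_1+\a_2=\e_1-\e_3$, whose unique partition from $\Lambda=\{\a_1,\a_1+\a_2\}$ is $\{\a_1+\a_2\}$ (equivalently, $\js(\langle 1,1\rangle,\langle 1,1\rangle,1,2)=1$). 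So the corollary should be stated for $r\geq 3$. The only change to make in your write-up is to say this outright rather than deferring $r=2$ to ``direct verification''; as phrased, a reader might think the identity survives there after extra work, when in fact the polynomial part's contribution of $\tfrac15$ at $n=1$ is exactly the discrepancy $1-\tfrac45$. Your contrast with the preceding corollary is also correct: there the polynomial part of $\frac{x-2x^2}{1-5x+5x^2}$ is a constant, disturbing only $n=0$, which lies outside that corollary's range.
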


\begin{corollary}
Let $r\geq 3$. If $\Lambda=\{\e_i-\e_j\in \Phi_{A_r}^+: 1\leq i<j\leq r+1, i\leq r-2\}$, then 
\[\big\lvert Q_{\Lambda,2}(2\a_1+3\a_2+\cdots+3\a_{r-1}+\a_r)\big\rvert=\frac{\left(14 \sqrt{3}-9\right) \left(4-\sqrt{3}\right)^{r-1}+\left(14 \sqrt{3}+9\right) \left(4+\sqrt{3}\right)^{r-1}}{169 \sqrt{3}}.
\]
\end{corollary}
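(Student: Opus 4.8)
The plan is to prove this exactly as one would prove the two preceding corollaries: reinterpret the restricted partition function as a count of periodic juggling sequences, read off the relevant generating function from Table~\ref{tab:BG}, and extract the appropriate power-series coefficient in closed form. First I would observe that the weight in question is a net change vector. Writing it in the standard basis gives $\mu = 2\a_1+3\a_2+\cdots+3\a_{r-1}+\a_r = 2\e_1+\e_2-2\e_r-\e_{r+1}$, which is precisely $\delta(\langle 2,1\rangle,\langle 2,1\rangle,r-1)$, the net change vector of a length-$(r-1)$ juggling sequence with initial and terminal state $\langle 2,1\rangle$. With $\Lambda$ as prescribed by Theorem~\ref{thm:jugtopart} (namely the throws at times $1,\ldots,r-1$), that theorem gives a bijection identifying $Q_{\Lambda,2}(\mu)$ with $\JS(\langle 2,1\rangle,\langle 2,1\rangle,r-1,2)$, so $|Q_{\Lambda,2}(\mu)| = \js(\langle 2,1\rangle,\langle 2,1\rangle,r-1,2)$, the number of periodic juggling sequences of period $r-1$ with state $\langle 2,1\rangle$ and hand capacity $2$.

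Next I would invoke Table~\ref{tab:BG}: the number of such periodic sequences of length $n$ has generating function $G(x)=\sum_n \js(\langle 2,1\rangle,\langle 2,1\rangle,n,2)\,x^n = \dfrac{x-4x^2+3x^3}{1-8x+13x^2}$, so the desired quantity is the coefficient of $x^{r-1}$ in $G(x)$. To extract it I would factor the denominator as $1-8x+13x^2=(1-(4+\sqrt3)x)(1-(4-\sqrt3)x)$, whose reciprocal roots $4\pm\sqrt3$ are the roots of $t^2-8t+13$. After the polynomial division forced by the cubic numerator, a partial fraction decomposition yields $G(x)=cx+d+\dfrac{A}{1-(4+\sqrt3)x}+\dfrac{B}{1-(4-\sqrt3)x}$; equivalently $g_n:=[x^n]G(x)$ satisfies $g_n=8g_{n-1}-13g_{n-2}$ for $n\ge 4$. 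Solving with the base cases $g_2=4$ and $g_3=22$, computed directly from $G$, pins down $A=\dfrac{14\sqrt3+9}{169\sqrt3}$ and $B=\dfrac{14\sqrt3-9}{169\sqrt3}$, so that $g_n=A(4+\sqrt3)^n+B(4-\sqrt3)^n$, which gives the stated formula upon setting $n=r-1$.

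The main issue is bookkeeping rather than any deep obstacle, since Theorem~\ref{thm:jugtopart} and Table~\ref{tab:BG} do the conceptual work. Two points deserve care. First, because the numerator of $G(x)$ has degree $3$ while the denominator has degree $2$, the decomposition carries a genuine polynomial part $cx+d$; I must check that this part only perturbs the coefficients of $x^0$ and $x^1$, so that the clean two-term closed form is valid precisely in the stated range $r\ge 3$, that is $n\ge 2$. Second, I must verify that the closed form agrees with $g_n$ for all $n\ge 2$ and not merely at the base cases: this follows because both sides satisfy $g_n=8g_{n-1}-13g_{n-2}$ for $n\ge 4$ (the roots $4\pm\sqrt3$ annihilate this recurrence) and they match at $n=2,3$. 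A final sanity check at $r=3$, where the formula evaluates to $4$, confirms the normalization of $A$ and $B$ and closes the argument.
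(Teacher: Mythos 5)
Your strategy is exactly the one the paper intends: recognize the weight as the net change vector $\delta(\langle 2,1\rangle,\langle 2,1\rangle,r-1)=2\e_1+\e_2-2\e_r-\e_{r+1}$, use Theorem~\ref{thm:jugtopart} to identify the restricted partition count with $\js(\langle 2,1\rangle,\langle 2,1\rangle,r-1,2)$, and extract the coefficient of $x^{r-1}$ from the generating function $\frac{x-4x^2+3x^3}{1-8x+13x^2}$ of Table~\ref{tab:BG}. Your coefficient extraction is also correct: the recurrence $g_n=8g_{n-1}-13g_{n-2}$ holds only for $n\geq 4$ because of the cubic numerator, the base cases are $g_2=4$ and $g_3=22$, and the two-term closed form is therefore valid for all $n\geq 2$, i.e., for $r\geq 3$.

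The gap is in your first step, and it is not just bookkeeping. Theorem~\ref{thm:jugtopart} with $\bfa=\bfb=\langle 2,1\rangle$ and $n=r-1$ prescribes $\Lambda=\{\e_i-\e_j\in\Phi^+_{A_r}\,:\, i\leq r-1\}=\Phi^+_{A_r}\setminus\{\a_r\}$: throws must be allowed at every time $1,\ldots,r-1$, in particular at the last time step, where the balls in hand must be rethrown to heights $1$ and $2$ to recreate the terminal state $\langle 2,1\rangle$. The statement you were given instead has $i\leq r-2$, which forbids the roots $\e_{r-1}-\e_r$ and $\e_{r-1}-\e_{r+1}$ (the throws at time $r-1$), and this changes the count. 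For $r=3$ the stated $\Lambda$ is $\{\e_1-\e_2,\e_1-\e_3,\e_1-\e_4\}$, none of which has positive $\e_2$-coordinate, so $P_\Lambda(2\a_1+3\a_2+\a_3)=\emptyset$ and the left-hand side is $0$, while the right-hand side evaluates to $4$; for $r=4$ the stated $\Lambda$ yields $4$ partitions rather than the required $22$. So the corollary as printed is false---an off-by-one slip in the paper (compare the preceding corollary, where the state $\langle 1,1\rangle$, also giving sequences of length $r-1$, correctly receives $\Lambda=\Phi^+_{A_r}\setminus\{\a_r\}$). Your argument proves the corrected statement, but your assertion that the statement's $\Lambda$ consists of ``the throws at times $1,\ldots,r-1$'' is precisely where a proof of the literal statement breaks down; you needed to notice and flag that the stated restriction $i\leq r-2$ does not match the restriction delivered by Theorem~\ref{thm:jugtopart}.
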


\begin{corollary}
Let $r\geq 5$. If $\Lambda=\{\e_i-\e_j\in \Phi_{A_r}^+: 1\leq i<j\leq r+1, i\leq r-2\}$, then 
{\small\[\big\lvert Q_{\Lambda,2}(\a_1+2\a_2+3\a_3+\cdots+3\a_{r-2}+2\a_{r-1}+\a_r)\big\rvert=\frac{\left(9-14 \sqrt{3}\right) \left(4-\sqrt{3}\right)^{r-2}+\left(9+14 \sqrt{3}\right) \left(4+\sqrt{3}\right)^{r-2}}{338}.
\]}
\end{corollary}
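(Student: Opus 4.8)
The plan is to recognize the weight
$\mu = \a_1 + 2\a_2 + 3\a_3 + \cdots + 3\a_{r-2} + 2\a_{r-1} + \a_r$
as the type $A_r$ weight that Table~\ref{tab:BG} associates, via Theorem~\ref{thm:jugtopart}, to the periodic juggling sequences with initial and terminal state $\langle 1,1,1\rangle$, hand capacity $m = 2$, and length $n = r-2$, and then to read off the answer from the corresponding generating function.

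First I would pass from the simple-root expression of $\mu$ to its standard basis representation. Substituting $\a_i = \e_i - \e_{i+1}$ and telescoping the coefficients $1,2,3,\ldots,3,2,1$, I expect to obtain
$\mu = \e_1 + \e_2 + \e_3 - \e_{r-1} - \e_r - \e_{r+1}$.
This is exactly the net change vector $\delta = a_1\e_1 + a_2\e_2 + a_3\e_3 - (b_1\e_{n+1} + b_2\e_{n+2} + b_3\e_{n+3})$ of Theorem~\ref{thm:jugtopart} when $\bfa = \bfb = \langle 1,1,1\rangle$ and $n = r-2$, and the resulting vector has dimension $r+1$. The restriction set in the corollary, $\Lambda = \{\e_i - \e_j \in \Phi_{A_r}^+ : 1 \le i < j \le r+1,\ i \le r-2\}$, is precisely the set $\{\e_i - \e_j : i \le n\}$ prescribed by the theorem for this $n$, and $Q_{\Lambda,2}$ is the set $Q_\Lambda$ with hand capacity $m = 2$. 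Therefore Theorem~\ref{thm:jugtopart} gives
$|Q_{\Lambda,2}(\mu)| = \js(\langle 1,1,1\rangle, \langle 1,1,1\rangle, r-2, 2)$,
the number of periodic sequences in the relevant row of Table~\ref{tab:BG}. Note that the hypothesis $r \ge 5$ is exactly what places $n = r-2$ in the regime $n \ge 3$, for which the table records this weight.

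Next I would extract the count from the generating function $G(x) = (x - 5x^2 + 7x^3)/(1 - 8x + 13x^2)$ listed in that row, so that $|Q_{\Lambda,2}(\mu)| = [x^{r-2}]G(x)$. The denominator factors as $(1 - (4+\sqrt{3})x)(1 - (4-\sqrt{3})x)$, so the coefficients $a_n = [x^n]G(x)$ satisfy $a_n = 8a_{n-1} - 13a_{n-2}$ and have a closed form $a_n = A(4+\sqrt{3})^n + B(4-\sqrt{3})^n$ once $n$ is large enough that the numerator stops contributing. Computing the first few coefficients directly (for instance $a_2 = 3$ and $a_3 = 18$) and fitting $A$ and $B$ should yield $A = (9 + 14\sqrt{3})/338$ and $B = (9 - 14\sqrt{3})/338$; substituting $n = r-2$ then produces the stated formula.

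The hard part will be bookkeeping the range of validity rather than any conceptual difficulty. Because the numerator of $G$ has larger degree than the denominator, the pure exponential closed form does not capture every coefficient---indeed it already fails at $n = 1$---so I must confirm it holds throughout the range $n \ge 3$ forced by $r \ge 5$. The cleanest route is a polynomial division writing $G(x) = \tfrac{7}{13}x - \tfrac{9}{169} + R(x)$ with $R$ a proper rational function having denominator $1 - 8x + 13x^2$; the polynomial part only perturbs $a_0$ and $a_1$, so the exponential formula is correct for all $n \ge 2$. Alternatively, since both $a_n$ (for $n \ge 4$, by the numerator's degree) and the fitted closed form satisfy the same second-order recurrence and agree at $n = 2,3$, an induction gives agreement for all $n \ge 2$, which comfortably contains every $n = r-2$ with $r \ge 5$.
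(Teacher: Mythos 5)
Your proposal is correct and follows the paper's own route exactly: identify the weight $\a_1+2\a_2+3\a_3+\cdots+3\a_{r-2}+2\a_{r-1}+\a_r$ with the row of Table~\ref{tab:BG} for state $\langle 1,1,1\rangle$, $m=2$, $n=r-2$ (via Theorem~\ref{thm:jugtopart}, with $\delta=\e_1+\e_2+\e_3-\e_{r-1}-\e_r-\e_{r+1}$ and the prescribed $\Lambda$), and then extract $[x^{r-2}]$ from $(x-5x^2+7x^3)/(1-8x+13x^2)$. Your computed values $a_2=3$, $a_3=18$, the fitted constants $(9\pm 14\sqrt{3})/338$, and your care about the validity range $n\geq 2$ (noting the recurrence $a_n=8a_{n-1}-13a_{n-2}$ only kicks in at $n\geq 4$) are all accurate, so nothing is missing.
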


Next we place restriction to the throws of juggling sequences. We show that such restrictions correspond to restrictions on the positive roots allowed in Kostant's partition function. We formalize this in the next result, which follows similarly to Theorem~\ref{thm:jugtopart} using the correspondence between allowed throws and allowed roots.

\begin{corollary}
\label{cor:resMJS}
Consider juggling states $\bfa=\langle a_1,\ldots,a_s\rangle$ and $\bfb=\langle b_1,\ldots,b_t\rangle$, satisfying \[a_1+\cdots+a_s = b_1+\cdots+b_t,\] and positive integers $n$ and $m$. Let
\[
 \delta=a_1\e_1+\cdots+a_s\e_s-(b_1\e_{n+1}+\cdots + b_t\e_{n+t})
\]
and $r+1$ be the dimension of this vector. Suppose $\T\subseteq\T_r$ is an allowed set of throws and 
\[
\Lambda = \big\{\Gamma^{-1}(T):T\in\T \big\}\subseteq\Phi_{A_r}^+,
\]
and define
\[
    Q_\Lambda(\delta) := \{p\in P_\Lambda(\delta): \text{for all $j > 0$, } a_j + \sum_{\e_i-\e_j\in p} 1 \leq m\}.
\]
The set $\JS_\T(\bfa,\bfb,n,m)$ is in bijection with $Q_\Lambda(\delta)$; therefore, $\js_T(\bfa,\bfb,n,m) = |Q_\Lambda(\delta)|.$  
\end{corollary}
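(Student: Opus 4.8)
The plan is to obtain this corollary as the common refinement of Theorem~\ref{thm:jugtopart} and Theorem~\ref{thm:restrictparts}. Recall that Theorem~\ref{thm:jugtopart} already supplies a bijection $\Gamma$ between $Q_{\Lambda_0}(\delta)$ and $\JS(\bfa,\bfb,n,m)$, where $\Lambda_0=\{\e_i-\e_j\in\Phi_{A_r}^+:1\le i<j\le r+1,\ i\le n\}$ is the time-restricted root set appearing there, and that under $\Gamma$ a part $\e_i-\e_j$ of a partition corresponds precisely to a throw $\Gamma(\e_i-\e_j)=T_{i,j-i}$ of the associated sequence. The restriction to an allowed throw set $\T$ is therefore, termwise across $\Gamma$, exactly the restriction to the allowed root set $\Lambda=\Gamma^{-1}(\T)$, which is the content I would borrow from Theorem~\ref{thm:restrictparts}. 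Since $Q_\Lambda(\delta)$ is cut out of $P_\Lambda(\delta)$ by the identical hand-capacity inequality used to define $Q_{\Lambda_0}(\delta)$, the only work is to check that this single restriction transfers cleanly in both directions.

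\textbf{The two directions.} I would mirror the two well-definedness arguments in the proof of Theorem~\ref{thm:jugtopart}. For the forward direction, take $p\in Q_\Lambda(\delta)$. By Theorem~\ref{thm:jugtopart} the sequence $S=\Gamma(p)$ lies in $\JS(\bfa,\bfb,n,m)$, and its multiset of throws is exactly $\{\Gamma(\beta):\beta\in p\}$; since each $\beta\in p$ lies in $\Lambda=\Gamma^{-1}(\T)$, each throw lies in $\T$, so $S\in\JS_\T(\bfa,\bfb,n,m)$. For the reverse direction, take $S\in\JS_\T(\bfa,\bfb,n,m)$; Theorem~\ref{thm:jugtopart} gives $p=\Gamma^{-1}(S)\in Q_{\Lambda_0}(\delta)$, and because every throw of $S$ belongs to $\T$, every part of $p$ belongs to $\Gamma^{-1}(\T)=\Lambda$, so $p\in Q_\Lambda(\delta)$. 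Hence $\Gamma$ and $\Gamma^{-1}$ restrict to mutually inverse maps between $Q_\Lambda(\delta)$ and $\JS_\T(\bfa,\bfb,n,m)$, giving $\js_\T(\bfa,\bfb,n,m)=|Q_\Lambda(\delta)|$.

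\textbf{The main obstacle.} The one point requiring care is compatibility with the time constraint already built into Theorem~\ref{thm:jugtopart}: its root set $\Lambda_0$ only permits $i\le n$. A set of throws that is actually realizable in a length-$n$ sequence can contain only throws $T_{i,j}$ with $i\le n$, so $\T\subseteq\Gamma(\Lambda_0)$ and hence $\Lambda=\Gamma^{-1}(\T)\subseteq\Lambda_0$. This inclusion is essential rather than cosmetic: roots $\e_i-\e_j$ with $i>n$ genuinely occur in partitions of $\delta$ while corresponding to no length-$n$ sequence. For example, when $\bfa=\langle 2\rangle$, $\bfb=\langle 1,1\rangle$, and $n=1$ we have $\delta=2\e_1-\e_2-\e_3$, and the multiset $\{\e_1-\e_2,\ \e_1-\e_2,\ \e_2-\e_3\}$ partitions $\delta$ yet uses the root $\e_2-\e_3=\Gamma^{-1}(T_{2,1})$ with $i=2>n$. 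Once $\Lambda\subseteq\Lambda_0$ is recorded, $\Gamma$ of Theorem~\ref{thm:jugtopart} restricts directly to the sub-collections carved out by $\T$ and $\Lambda$, the hand-capacity inequality is inherited verbatim (it bounds only the number of parts ending in each coordinate $j$, which the throw restriction does not affect), and the argument above closes.
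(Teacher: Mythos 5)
Your core argument is correct and is, in substance, the paper's own: the paper gives no separate proof of Corollary~\ref{cor:resMJS}, saying only that it ``follows similarly to Theorem~\ref{thm:jugtopart} using the correspondence between allowed throws and allowed roots,'' and your two-direction check---that the bijection $\Gamma$ of Theorem~\ref{thm:jugtopart} sends parts lying in $\Lambda=\Gamma^{-1}(\T)$ precisely to throws lying in $\T$ and conversely, with the hand-capacity inequality inherited verbatim---is exactly that argument made explicit.

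What you call the main obstacle deserves more emphasis than you give it: you have in fact shown that the corollary as printed is not literally true. The hypothesis allows any $\T\subseteq\T_r$, and since $r+1=n+t$ we have $r>n$ whenever $t\geq 2$, so $\T$ may contain throws $T_{i,j}$ with $i>n$. Such throws can never occur in a length-$n$ sequence, yet the corresponding roots $\e_i-\e_j$ with $i>n$ do occur in partitions of $\delta$. Your example is a genuine counterexample: with $\bfa=\langle 2\rangle$, $\bfb=\langle 1,1\rangle$, $n=1$, $m=2$, and $\T=\T_2$, one has $\js_\T(\bfa,\bfb,1,2)=1$ (the unique sequence throws one ball to height $1$ and one to height $2$), whereas $Q_\Lambda(\delta)$ contains both $\{\e_1-\e_2,\,\e_1-\e_3\}$ and $\{\e_1-\e_2,\,\e_1-\e_2,\,\e_2-\e_3\}$, so $\lvert Q_\Lambda(\delta)\rvert=2$. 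Consequently the statement must either carry the extra hypothesis $\T\subseteq\Gamma(\Lambda_0)$---your reading of ``allowed set of throws,'' where $\Lambda_0=\{\e_i-\e_j\in\Phi_{A_r}^+ : i\leq n\}$ is the root set of Theorem~\ref{thm:jugtopart}---or else define $\Lambda$ as $\Gamma^{-1}(\T)\cap\Lambda_0$. Do note that your sentence deriving $\T\subseteq\Gamma(\Lambda_0)$ is an added assumption, not a consequence of the stated hypotheses; but that is a defect of the statement rather than of your proof, since no proof of the unamended statement can exist. (Theorem~\ref{thm:restrictparts} is immune to this issue only because there the sequence length equals $r$, so every throw in $\T_r$ is realizable.)
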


Similar to above, $\js(\bfa,\bfb,n)=K_{\Lambda}(\delta)$.

\section{Other Lie types}\label{sec:HIO}
In this section we extend our work on  juggling sequences to the Lie algebras of type $B$, $C$, and $D$, by defining how juggle in these Lie types. We then give a correspondence of counting these new juggling sequences as sums of juggling sequences in type $A$. We end the section by giving a result relating the number of partitions of the highest root, settling a problem of Harris, Insko, and Omar~\cite{HIO}.

\subsection{Juggling sequences of other Lie types}\label{sec:othertypes-juggling}

We can define juggling sequences of types $B$, $C$, and $D$ that involve a second (reflected) conveyer.  To be precise, a type $B$, $C$, or $D$ juggling state is a pair $(\bfs,\bft)$ where $\bfs = \langle s_1, \ldots, s_h \rangle$ is an integer vector and $\bft = \langle t_1, \ldots, t_h \rangle$ is a nonnegative integer vector. The difference between types $B$, $C$, and $D$ is how balls can be thrown, be dropped, or disappear, which are informed by the positive roots of each Lie type.  

\begin{definition}\label{defs:juggling other types}
A {\em downward throw} at time $i$ to height $j$ is a throw of a ball at height $1$ in the standard conveyer to height $j$ in the reflected conveyer.  This corresponds to a positive root of the form $\e_i+\e_{i+j}$.  A {\em single drop} at time $i$ is when a ball at height $1$ at time $i$ in the standard conveyer disappears.  This corresponds to a positive root of the form $\e_i$.  A {\em double drop} at time $i$ is when two balls at height $1$ at time $i$ in the standard conveyer disappear simultaneously.  This corresponds to a positive root of the form $2\e_i$. Last, a {\em cancellation} at time $i$ is when a ball at height $1$ in the standard conveyer and a ball at height $1$ in the reflected conveyer disappear simultaneously. 
\end{definition}

Figure~\ref{fig:BCDthrows} illustrates the behaviors described in Definition \ref{defs:juggling other types}.  

\begin{figure}[ht]
    \centering
    \subcaptionbox{Downward throw to height $3$}%
  [.23\textwidth]{\includegraphics[height=1in]{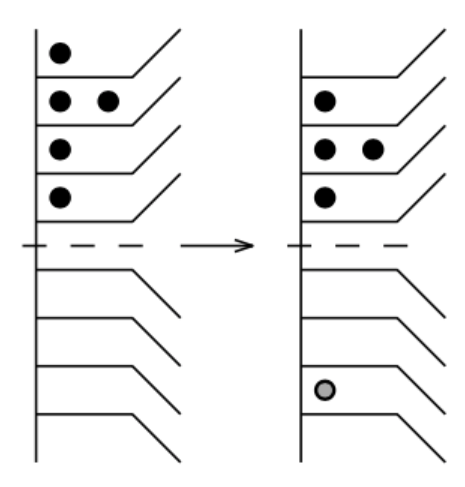}}
    \subcaptionbox{Single drop}%
  [.23\textwidth]{\includegraphics[height=1in]{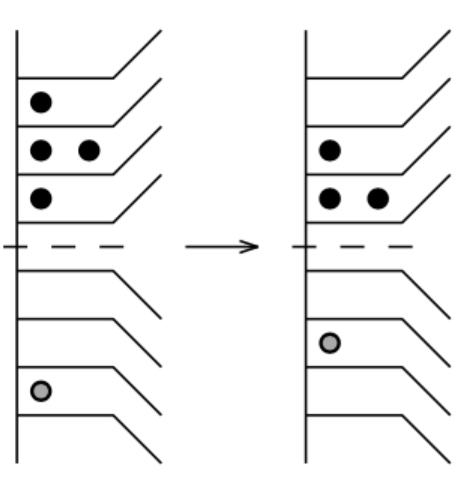}}
    \subcaptionbox{Double drop}%
  [.23\textwidth]{\includegraphics[height=1in]{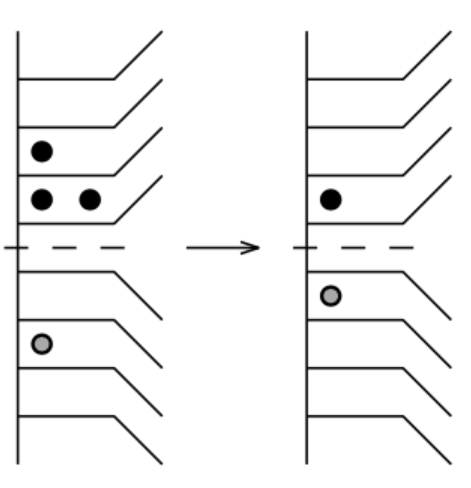}}
    \subcaptionbox{Cancellation}%
  [.23\textwidth]{\includegraphics[height=1in]{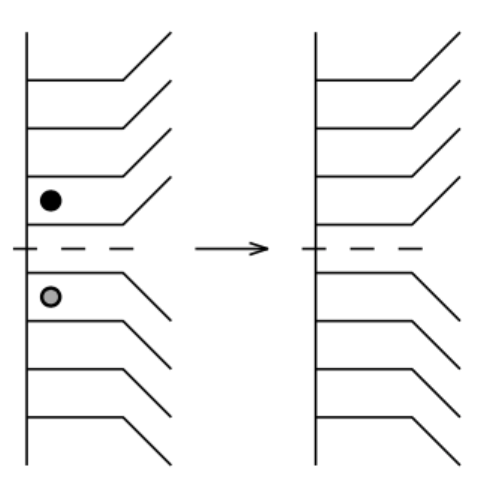}}
    \caption{Examples of behaviors that may occur in juggling sequences of types $B$, $C$, and $D$.}
    \label{fig:BCDthrows}
\end{figure}

\begin{remark}
Balls in the reflected conveyer are never thrown; they must disappear through cancellations.  
\end{remark}

\begin{definition}
A {\em type $B$, $C$, or $D$ juggling sequence} is a sequence \[S=\big((\bfs_0,\bft_0), (\bfs_1,\bft_1), \ldots, (\bfs_n,\bft_n) \big)\] where two successive juggling states $(\bfs_{i-1},\bft_{i-1})=(\langle s_1, \ldots, s_h \rangle,\langle t_1, \ldots, t_h \rangle)$ and $(\bfs_{i},\bft_{i})$ satisfy 

\begin{equation*}
    \bfs_i=\langle s_2 + b_1, s_3 + b_2, \ldots, s_h + b_{h-1}, b_h, \ldots, b_{h'} \rangle
\end{equation*}
and
\begin{equation*}
    \bft_i=\langle t_2 + c_1, t_3 + c_2, \ldots, t_h + c_{h-1}, c_h, \ldots, c_{h'} \rangle,
\end{equation*}
subject to conditions that depend on the type as detailed below.

In type $D$, positive roots are of the form $\e_i-\e_j$ and $\e_i+\e_j$, which means (upward) throws and downward throws are allowed (as are cancellations) but no drops are allowed. As such, the nonnegative integers $b_j$ and $c_j$ satisfy 
\begin{equation}\label{eq:typeD}
    \sum_{j=1}^{h'} b_j+c_j = s_1-t_1.  
\end{equation}

In type $B$, positive roots are of the form $\e_i-\e_j$, $\e_i+\e_j$, and $\e_i$, so (upward) throws, downward throws, and single drops are allowed (as are cancellations), but no double drops are allowed.  The condition in Equation~\eqref{eq:typeD} is replaced by
\begin{equation}\label{eq:typeBC}
   d_i + \sum_{j=1}^{h'} b_j+c_j = s_1-t_1, 
\end{equation}
where $d_i$ is the nonnegative integer of single drops at time $i$. 

In type $C$, positive roots are of the form $\e_i-\e_j$, $\e_i+\e_j$, and $2\e_i$, so (upward) throws, downward throws, and double drops are allowed (as are cancellations), but no single drops are allowed.  The condition is that the $b_j$ and $c_j$ satisfy Equation~\eqref{eq:typeBC} with the additional restriction that $d_i$ is a nonnegative even integer.
\end{definition}

\begin{example}
In type $D_4$, $\wp_{D_4}(\tilde{\alpha}_{D_4})=5$ since the five partitions of $\tilde{\alpha}_{D_4}=2\e_1$ (and their corresponding juggling sequences) are the following.

\smallskip
\begin{center}
\resizebox{\textwidth}{!}{
    \begin{tikzpicture}
    \node at (0,0) {\includegraphics[width=6.5in]{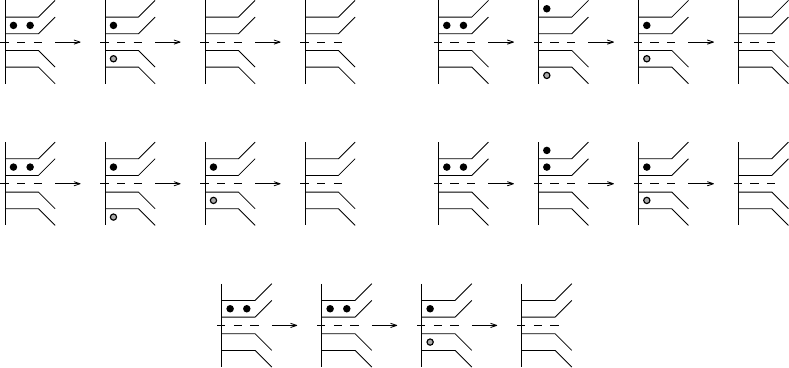}
    };
    \node at (-4.5,1.5) {$p_1=\{\e_1-\e_2,\,\e_1+\e_2\}$};
    \node at (4.5,1.5) {$p_2=\{\e_1-\e_3,\,\e_1+\e_3\}$};
    \node at (-4.5,-1.5) {$p_3=\{\e_1-\e_2,\,\e_1+\e_3,\,\e_2-\e_3\}$};
    \node at (4.5,-1.5) {$p_4=\{\e_1-\e_2,\,\e_1-\e_3,\,\e_2+\e_3\}$};
    \node at (0,-4.5) {$p_5=\{\e_1-\e_2,\,\e_1-\e_2,\,\e_2-\e_3,\,\e_2+\e_3\}$};
    \end{tikzpicture}
    }
    \end{center}
\end{example}

\subsection{Identity of Schmidt and Bincer}\label{sec:sb}
By applying an identity of Schmidt and Bincer \cite{SB} we can convert any Kostant's partition function of type $B$, $C$, or $D$ into a sum of Kostant's partition functions of type $A$. We follow \cite[Sections~2 and 4]{SB} and use Remark \ref{rem:roots-different-types}, where we think of positive roots of type $A$, as a subset of the positive roots of other Lie types. Let $S=\Phi^+_{\mathfrak{g}}$ be the set of positive roots of $\mathfrak{g}$, and consider the set of positive roots of type $A$,  $\Phi_{A_r}^+\subseteq \Phi^+_{\mathfrak{g}}$. Next we let $T=\Phi_{A_r}^+$. 

To evaluate $K_{S}(\mu)$ we note that every partition of $\mu$ is a multiset $p$ containing some parts $\beta_i$ in $T$ and some parts $\gamma_i$ in $S\setminus T$. In other words, $\mu=\mu_T+\mu_{S\setminus T}$, where $\mu_T$ has only parts in $T$, and $\mu_{S\setminus T}$ has only parts in $S\setminus T$. By defining an ordering of the positive roots in $T=\{\beta_i: 1\leq i \leq k\}$ we note that $\mu_T=\sum_{i=1}^kc_i\beta_i=\textbf{c}\cdot \boldsymbol{\beta}$ with nonnegative integers $c_i$ for all $1\leq i\leq k$. It follows that the number of ways to write $\mu$ as $\mu_T+\mu_{S\setminus T}$ for a particular fixed configuration of the coefficients $c_1,\ldots,c_k$ is given by 
\[K_{T}(\mu_{T})=K_T(\mu-\bf{c}\cdot\boldsymbol{\beta}).\]
Thus 
\[K_{S}(\mu)=\sum_{\bf{c}} K_{T}(\mu-\bf{c}\cdot\boldsymbol{\beta}),\]
where the $\bf{c}$ runs over all possible coefficient configurations such that $\mu-\bf{c}\cdot\boldsymbol{\beta}$ is a nonnegative integral combination of the positive roots in $T$.
This establishes the following result.

\begin{proposition}[\cite{SB}, Equation~(4.1)]\label{prop:SBrecurrence} If $\g$ is a classical Lie algebra of type $B_r$, $C_r$, or $D_r$, then
$$ \wp_{\g}(\mu) = \sum_{(c_1,\ldots,c_n)} \wp_{A_r}\left(\mu - \sum_{i=1}^n c_i\beta_i \right),$$
where $\beta_1,\beta_2,\ldots,\beta_n$ denotes an ordering of the positive roots of type $A_r$, and where $(c_1,\ldots,c_n)$ denotes all possible tuples of nonnegative integers such that $\mu-\sum_{i=1}^n c_i\beta_i$ is a nonnegative integral combination of positive roots in $\Phi^+_{A_r}$. 
\end{proposition}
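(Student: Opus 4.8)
The plan is to prove the identity by a direct bijection that separates, within each partition of $\mu$, the parts that are type-$A_r$ roots from those that are not, exactly along the decomposition $\mu=\mu_T+\mu_{S\setminus T}$ set up above. Retaining the notation $S=\Phi^+_{\g}$ and $T=\Phi^+_{A_r}\subseteq S$, I would fix an ordering $\gamma_1,\ldots,\gamma_m$ of the complementary roots $S\setminus T$ (these are the roots $\e_i+\e_j$ in every type, together with $\e_i$ in type $B_r$ and $2\e_i$ in type $C_r$). The key structural point is that $T$ and $S\setminus T$ are disjoint subsets of $\Phi^+_{\g}$, so every multiset $p\in P_S(\mu)$ splits uniquely as a disjoint union $p=p_T\sqcup p_{S\setminus T}$ in which $p_T$ uses only roots of $T$ and $p_{S\setminus T}$ only roots of $S\setminus T$.

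First I would extract the complementary data: for $p\in P_S(\mu)$, let $c_i$ be the multiplicity of $\gamma_i$ in $p_{S\setminus T}$, producing a tuple $\mathbf{c}=(c_1,\ldots,c_m)\in\ZZ_{\geq 0}^m$ whose associated part $p_{S\setminus T}$ sums to $\sum_i c_i\gamma_i$. Since $p$ is a partition of $\mu$, the remaining part $p_T$ is then a partition of $\mu-\sum_i c_i\gamma_i$ into roots of $T$, so $p_T\in P_T(\mu-\sum_i c_i\gamma_i)=P_{A_r}(\mu-\sum_i c_i\gamma_i)$. The map I would study is $p\mapsto(\mathbf{c},p_T)$.

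Next I would verify that this map is a bijection from $P_S(\mu)$ onto $\bigsqcup_{\mathbf{c}}P_{A_r}(\mu-\sum_i c_i\gamma_i)$. The inverse is transparent: given any tuple $\mathbf{c}$ and any type-$A_r$ partition $q$ of $\mu-\sum_i c_i\gamma_i$, the multiset consisting of $q$ together with $c_i$ copies of each $\gamma_i$ is a partition of $\mu$ in $P_S(\mu)$ whose complementary multiplicities are precisely $\mathbf{c}$ and whose $T$-part is precisely $q$, and this recovers $p$. Passing to cardinalities and summing over $\mathbf{c}$ then yields $K_S(\mu)=\sum_{\mathbf{c}}K_{A_r}(\mu-\sum_i c_i\gamma_i)$, which is the asserted recurrence; the tuples for which $\mu-\sum_i c_i\gamma_i$ is not a nonnegative integral combination of type-$A_r$ roots contribute $K_{A_r}=0$, so the sum may be restricted to those $\mathbf{c}$ for which it is.

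The only step needing a word of care, and the nearest thing to an obstacle, is finiteness of the right-hand side. I would argue this via height: writing each positive root in terms of the simple roots, its height (the sum of the coefficients) is a positive integer, and the heights of the parts of any $p\in P_S(\mu)$ sum to the height of $\mu$. Hence each $p$ has at most $\mathrm{ht}(\mu)$ parts, so every coordinate of $\mathbf{c}$ is bounded by $\mathrm{ht}(\mu)$ and only finitely many tuples occur; all remaining terms vanish and the sum is well defined. Beyond this bookkeeping I expect no genuine difficulty, since the argument is a pure disjoint-support decomposition of multisets and uses nothing about $\g$ except the inclusion $\Phi^+_{A_r}\subseteq\Phi^+_{\g}$.
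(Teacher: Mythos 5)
Your proof is correct and is essentially the argument the paper itself makes: decompose each partition $p\in P_{\g}(\mu)$ into its multiset of parts lying in $T=\Phi^+_{A_r}$ and its multiset of parts lying in $S\setminus T$, record the multiplicities of the latter, and observe that for a fixed multiplicity tuple the completions are counted by $K_{A_r}\bigl(\mu-\sum_i c_i\gamma_i\bigr)$. Your height bound establishing finiteness of the sum, and your explicit inverse map, are details the paper passes over entirely.

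There is one substantive difference, and it is to your credit: you index the tuples $\mathbf{c}$ by the \emph{complementary} roots $\gamma_i\in\Phi^+_{\g}\setminus\Phi^+_{A_r}$, whereas the proposition as printed (and the paragraph preceding it) declares $\beta_1,\ldots,\beta_n$ to be an ordering of the positive roots \emph{of type $A_r$} and records their multiplicities, writing $\mu_T=\mathbf{c}\cdot\boldsymbol{\beta}$. Read literally, that version is false. For $\g=B_2$ and $\mu=\e_1+\e_2=\a_1+2\a_2$, the type-$A_2$ roots under the identification of Remark~\ref{rem:roots-different-types} are $\e_1-\e_2,\ \e_2,\ \e_1$, and the single complementary root is $\e_1+\e_2$; your sum gives $K_{A_2}(\mu)+K_{A_2}(0)=2+1=3=K_{B_2}(\mu)$, as it should, while the sum indexed by the three type-$A_2$ roots already contributes $2+1+2+1=6$ from the zero tuple and the three unit tuples. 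Indeed, the paper's own display $K_{T}(\mu_{T})=K_T(\mu-\mathbf{c}\cdot\boldsymbol{\beta})$ is coherent only when $\mathbf{c}\cdot\boldsymbol{\beta}=\mu_{S\setminus T}$, i.e.\ only with your indexing, which is also the form of Equation~(4.1) in \cite{SB}. So the mismatch is an error in the printed statement, not a gap in your argument: you have proved the corrected (intended) identity. One last small caveat: your parenthetical identification of $S\setminus T$ (all $\e_i+\e_j$, plus $\e_i$ in type $B_r$ and $2\e_i$ in type $C_r$) corresponds to taking $T=\{\e_i-\e_j\}$, the Schmidt--Bincer convention, rather than the paper's larger embedded copy of $\Phi^+_{A_r}$ from Remark~\ref{rem:roots-different-types}; since your argument uses nothing about $S\setminus T$ beyond its being the complement of $T$ inside $S$, this does not affect correctness, though the two conventions yield different-looking (both valid) recurrences.
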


The next result follows from Proposition~\ref{prop:SBrecurrence} and Theorem~\ref{thm:unresA}.

\begin{corollary}\label{cor:applyingSB}
Let $\g$ be a classical Lie algebra of rank $r$ and let $\beta_1,\beta_2,\ldots,\beta_n$ denote an ordering of the positive roots of type $A_r$.
For a fixed weight $\mu$ of $\g$ let $\mathcal{C}_\mu$ be the set of all coefficient configurations $(c_1,c_2,\ldots,c_n)$ such that $\mu - \sum_{i=1}^n c_i\beta_i=\mu-\textbf{c}\cdot\boldsymbol{\beta}$  
is a nonnegative integral linear combination of positive roots in $\Phi^+_{A_r}$. If $\sum_{i=1}^{r+1}d_i\varepsilon_i$, with $d_1,\ldots,d_{r+1}\in\mathbb{Z}$, is the 
standard basis representation of
$\mu-\textbf{c}\cdot\boldsymbol{\beta}$, then
$$ \wp_{\g}(\mu) = \sum_{(c_1,\ldots,c_n)\in\mathcal{C}_\mu} \js\left(\left\langle d_1,d_2,\ldots,d_r\right\rangle, \left\langle \sum_{i=1}^rd_i \right\rangle, r, \sum_{i=1}^r|d_i|\right).$$
\end{corollary}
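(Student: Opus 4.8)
The plan is to chain together the two results cited immediately before the statement: the Schmidt--Bincer recurrence (Proposition~\ref{prop:SBrecurrence}), which reduces a type $B$, $C$, or $D$ partition count to a sum of type $A$ counts, and the juggling interpretation of the type $A$ Kostant partition function (Theorem~\ref{thm:unresA}), which converts each type $A$ term into a juggling sequence count. The only genuine work will be reconciling the explicit hand-capacity parameter $\sum_{i=1}^r|d_i|$ in the statement with the capacity-free $\js$ produced by Theorem~\ref{thm:unresA}.

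First I would invoke Proposition~\ref{prop:SBrecurrence} to write
\[
\wp_{\g}(\mu)=\sum_{(c_1,\ldots,c_n)\in\mathcal{C}_\mu}\wp_{A_r}\left(\mu-\sum_{i=1}^n c_i\beta_i\right),
\]
where, by the defining condition of $\mathcal{C}_\mu$, each summand is evaluated at a weight $\mu-\mathbf{c}\cdot\boldsymbol{\beta}$ that is a bona fide nonnegative integral combination of positive roots of type $A_r$. I would then fix such a configuration $\mathbf{c}$ and let $\langle d_1,\ldots,d_{r+1}\rangle$ be the standard basis representation of $\mu-\mathbf{c}\cdot\boldsymbol{\beta}$. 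Since every positive root $\e_i-\e_j$ of type $A_r$ has coordinate sum zero, so does $\mu-\mathbf{c}\cdot\boldsymbol{\beta}$; hence $d_{r+1}=-(d_1+\cdots+d_r)$ and the hypotheses of Theorem~\ref{thm:unresA} are met exactly, yielding
\[
\wp_{A_r}\left(\mu-\mathbf{c}\cdot\boldsymbol{\beta}\right)=\js\left(\langle d_1,\ldots,d_r\rangle,\left\langle \sum_{i=1}^r d_i\right\rangle, r\right).
\]

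The remaining gap is the hand-capacity parameter $m=\sum_{i=1}^r|d_i|$ that appears in the corollary but not above, and I would close it by arguing that this capacity is never binding, so by the convention recorded after the definition of $\js$ it may be attached without changing the count. The key observation is that no move in a magic juggling sequence creates a non-magic ball: non-magic balls only descend, get redistributed among heights, or are annihilated against a magic ball in a cancellation. Consequently the total number of non-magic balls is non-increasing in time and is therefore bounded throughout by its initial value $\sum_{i:\,d_i>0}d_i\le\sum_{i=1}^r|d_i|=m$. In particular the number of non-magic balls at any single height never exceeds $m$, so the capacity constraint is vacuous and
\[
\js\left(\langle d_1,\ldots,d_r\rangle,\left\langle \sum_{i=1}^r d_i\right\rangle, r\right)=\js\left(\langle d_1,\ldots,d_r\rangle,\left\langle \sum_{i=1}^r d_i\right\rangle, r, \sum_{i=1}^r|d_i|\right).
\]
Summing over $\mathcal{C}_\mu$ then gives the claimed identity.

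The steps using Proposition~\ref{prop:SBrecurrence} and Theorem~\ref{thm:unresA} are purely formal substitutions; the main obstacle, and really the only point requiring an argument, is justifying the insertion of the capacity $m=\sum_{i=1}^r|d_i|$. I expect this to hinge entirely on the monotonicity of the non-magic ball count, which should be checked carefully against the definition of magic juggling sequences—in particular that redistribution at height one moves existing balls rather than introducing new ones, and that cancellations can only decrease the non-magic total.
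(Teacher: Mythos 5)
Your proposal is correct and follows essentially the same route as the paper, whose entire proof is the observation that the corollary follows by chaining Proposition~\ref{prop:SBrecurrence} with Theorem~\ref{thm:unresA}. The only addition is your explicit justification that the hand capacity $\sum_{i=1}^r|d_i|$ is never binding (via monotonicity of the non-magic ball count), a point the paper leaves implicit by relying on its stated convention that a capacity at least as large as the number of non-magic balls can be ignored.
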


\subsection{Problem of Harris-Insko-Omar}\label{sec:HIOsubsec}
In \cite{HIO} Harris, Insko, and Omar determined the generating functions for the number of partitions of the highest root of a Lie algebra (of types $B$, $C$, and $D$ respectively) into a sum of positive roots. With these generating function at hand they posed the following problems.

\begin{problem}\label{prob:1B}
Find a combinatorial proof of the identity from \cite[Table 1]{HIO} 
\begin{equation} \label{eq:typeBjs11}
\wp_{B_r}(\hroot_{B_r}) = \js(\langle 1, 1 \rangle, \langle 1, 1 \rangle,  r) \textup{ when } r\geq 2.
\end{equation}
\end{problem}

\begin{problem}\label{prob:1C}
Find a combinatorial proof of the identity from \cite[Table 1]{HIO}
\begin{equation} \label{eq:typeCjs2}
\wp_{C_r}(\hroot_{C_r}) = \js(\langle 2 \rangle, \langle 2 \rangle,  r)
\textup{ when } r\geq 3.
\end{equation}
\end{problem}

The original statements in \cite{HIO} include the hand capacity constraint $m=2$.  Since only two balls are being juggled, this is not a restriction on the juggling sequence so we omit it. We settle these two problems, as well as the type $D$ version, with the following results.

\begin{theorem} \label{thm:solution-problem 1}
For $r\geq 2$, there is a bijection between the set of partitions of the highest root of the Lie algebra of type $B_r$ into positive roots and the set $\JS(\langle 1,1\rangle,\langle 1,1\rangle,r)$.
\end{theorem}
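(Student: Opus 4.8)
The plan is to realize the claimed bijection as a composition. First I would record the highest root in standard coordinates: since $\alpha_i=\e_i-\e_{i+1}$ for $i<r$ and $\alpha_r=\e_r$, a telescoping computation gives $\tilde{\alpha}_{B_r}=\alpha_1+2\alpha_2+\cdots+2\alpha_r=\e_1+\e_2$. On the juggling side I would invoke Theorem~\ref{thm:jugtopart} with $\bfa=\bfb=\langle 1,1\rangle$ and $n=r$ (with no hand capacity, since only two balls are ever in play): this identifies $\JS(\langle 1,1\rangle,\langle 1,1\rangle,r)$ bijectively, via $\Gamma$, with $P_\Lambda(\delta)$, where $\delta=\e_1+\e_2-\e_{r+1}-\e_{r+2}$ is a weight of type $A_{r+1}$ and $\Lambda=\{\e_i-\e_j\in\Phi^+_{A_{r+1}}:i\leq r\}$. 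It then suffices to construct a bijection $P_{B_r}(\e_1+\e_2)\to P_\Lambda(\delta)$.

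Next I would classify both sides by a coordinate-sum argument. The coordinates of $\e_1+\e_2$ sum to $2$; since a root $\e_i+\e_j$ contributes $2$, a root $\e_i$ contributes $1$, and a root $\e_i-\e_j$ contributes $0$, every partition in $P_{B_r}(\e_1+\e_2)$ contains either exactly one downward-throw root $\e_a+\e_b$ (and no drops) or exactly two drop roots $\e_a,\e_b$ (and no downward throws), together with a multiset of interior roots $\e_k-\e_l$ with $k<l\leq r$. On the $A_{r+1}$ side, because $\Lambda$ forbids any root whose first index exceeds $r$, the coordinates $r+1$ and $r+2$ can only ever be decremented; hence each partition in $P_\Lambda(\delta)$ contains exactly one root $\e_p-\e_{r+1}$ and exactly one root $\e_q-\e_{r+2}$ with $1\leq p,q\leq r$, plus interior roots $\e_k-\e_l$, $k<l\leq r$. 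In both cases the interior roots are exactly the positive roots of the type $A_{r-1}$ subsystem on coordinates $1,\ldots,r$, and validity of the partition is equivalent to the single condition that the special part plus the interior part sum correctly, namely $\e_a+\e_b+\sum(\text{interior})=\e_1+\e_2$ on the $B_r$ side and $\e_p+\e_q+\sum(\text{interior})=\e_1+\e_2$ on the $A_{r+1}$ side.

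I would then define the bijection on the special parts while fixing the interior multiset. To a partition in $P_\Lambda(\delta)$ with special pair $(p,q)$ (where $p$ is the start of the root ending at $r+1$ and $q$ the start of the root ending at $r+2$) I assign: the downward throw $\e_p+\e_q$ when $p<q$; the two distinct drops $\e_q,\e_p$ when $p>q$; and the two equal drops $\e_p,\e_p$ when $p=q$. This sets up a bijection between ordered pairs $(p,q)\in\{1,\ldots,r\}^2$ and the $B_r$ special data, since the pairs with $p<q$ match the downward throws $\e_a+\e_b$ ($a<b$), the pairs with $p>q$ match the distinct drop multisets $\{\e_a,\e_b\}$ ($a<b$), and the pairs with $p=q$ match the equal drop multisets $\{\e_a,\e_a\}$. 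Because the interior condition $\sum(\text{interior})=\e_1+\e_2-\e_p-\e_q$ depends only on the unordered index pair and is literally identical on the two sides, the interior multisets correspond by the identity and the map extends to a bijection of full partitions; composing with $\Gamma$ yields the desired bijection onto $\JS(\langle 1,1\rangle,\langle 1,1\rangle,r)$.

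The main obstacle is exactly this matching step. Since $\e_a+\e_b$ is unordered, naively both roots terminating an $A_{r+1}$-partition collapse to a single $B_r$ root, so the downward throws would be double-counted while nothing maps to the two-distinct-drop partitions. The resolution is to notice that the $A$-side secretly retains an orientation—which of the terminal coordinates $r+1,r+2$ each special root feeds—and that $2\e_i$ is \emph{not} a root of $B_r$, which forces the diagonal case $p=q$ into two separate single drops. Correctly exploiting the $p<q$/$p>q$/$p=q$ trichotomy is what makes the correspondence bijective, and I would first verify it on the case $r=2$, where the three partitions $\{\e_1+\e_2\}$, $\{\e_1,\e_2\}$, and $\{\e_1-\e_2,\e_2,\e_2\}$ arise, before writing the general argument.
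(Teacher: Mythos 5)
Your proposal is correct and takes essentially the same route as the paper: the paper's Corollary~\ref{cor:<1,1>toA} performs exactly your reduction (via Theorem~\ref{thm:jugtopart}, with $\delta=\e_1+\e_2-\e_{r+1}-\e_{r+2}$ and $\Lambda=\Phi^+_{A_{r+1}}\setminus\{\alpha_{r+1}\}$), and its Lemma~\ref{lem:BtoA} is precisely your special-pair bijection, merging the two roots ending at coordinates $r+1$ and $r+2$ into $\e_i+\e_j$ in one strict order and splitting them into two single drops in the weak reverse order. The only differences are cosmetic: the paper writes the argument in simple-root coordinates rather than standard coordinates, and it uses the mirror-image orientation convention in the trichotomy (its downward-throw case corresponds to your $p>q$ case), which does not affect bijectivity.
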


\begin{theorem} \label{thm:solution-problem 2}
For $r\geq 3$, there is a bijection between the set of partitions of the highest root of the Lie algebra of  type $C_r$ into positive roots and the set $\JS(\langle 2\rangle,\langle 2\rangle,r)$.
\end{theorem}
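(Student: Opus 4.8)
The plan is to route the bijection through the type $A$ results already established. First I would record the highest root of $C_r$ in standard coordinates: since $\tilde\alpha_{C_r}=2\alpha_1+\cdots+2\alpha_{r-1}+\alpha_r$ with $\alpha_i=\e_i-\e_{i+1}$ and $\alpha_r=2\e_r$, the telescoping sum gives $\tilde\alpha_{C_r}=2\e_1$. Theorem~\ref{thm:unresAA} applied to the type $A_r$ weight $2\tilde\alpha_{A_r}=2\e_1-2\e_{r+1}$ (equivalently Corollary~\ref{cor:hrootAtoMJS} with $n=2$) already exhibits $\Gamma$ as a bijection from $P_{A_r}(2\e_1-2\e_{r+1})$ onto $\JS(\langle 2\rangle,\langle 2\rangle,r)$. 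Hence it suffices to build a bijection $\Psi\colon P_{C_r}(2\e_1)\to P_{A_r}(2\e_1-2\e_{r+1})$ and compose it with $\Gamma$.

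The engine of $\Psi$ is a single structural observation. Among the positive roots of type $C_r$, each short root $\e_a-\e_b$ has coordinate sum $0$, whereas each root $\e_i+\e_j$ (with $i<j$) and each long root $2\e_i$ has coordinate sum $2$. Because $2\e_1$ has coordinate sum $2$, every partition $p\in P_{C_r}(2\e_1)$ contains \emph{exactly one} root of coordinate sum $2$; I write it as $\e_i+\e_j$ with $1\le i\le j\le r$, where $i=j$ denotes the long root $2\e_i$. I then define $\Psi(p)$ to be the multiset obtained from $p$ by deleting this distinguished root and adjoining the two type $A_r$ roots $\e_i-\e_{r+1}$ and $\e_j-\e_{r+1}$, while keeping every other (automatically short, hence type $A$) root of $p$ unchanged. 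This map is not cardinality-preserving, which is precisely why a naive relabelling of roots cannot work and the reflection into the extra coordinate $\e_{r+1}$ is needed.

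It remains to verify that $\Psi$ is a well-defined bijection, which I expect to be routine. Well-definedness is a one-line weight check: the change to the sum is $-(\e_i+\e_j)+(\e_i-\e_{r+1})+(\e_j-\e_{r+1})=-2\e_{r+1}$, so the image sums to $2\e_1-2\e_{r+1}$, and every root produced is a legitimate type $A_r$ root since $i,j\le r<r+1$. For the inverse I would use the mirror-image fact on the target side: in any $p'\in P_{A_r}(2\e_1-2\e_{r+1})$ the $\e_{r+1}$-coordinate equals $-2$ and no positive root contributes positively to it, so exactly two roots of $p'$ carry $-\e_{r+1}$, say $\e_i-\e_{r+1}$ and $\e_j-\e_{r+1}$ with $i\le j$; deleting these and adjoining $\e_i+\e_j$ returns a type $C_r$ partition of $2\e_1$ and visibly undoes $\Psi$.

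The point requiring genuine care — and the step I would write out most carefully — is the degenerate case $i=j$, where the distinguished root is the long root $2\e_i$ and the two adjoined roots $\e_i-\e_{r+1}$ coincide and must be tracked with multiplicity $2$; making the counts ``exactly one root of coordinate sum $2$'' and ``exactly two roots hitting $-\e_{r+1}$'' precise as statements about multisets is what forces both directions to be simultaneously well defined. Once this is settled, both $P_{C_r}(2\e_1)$ and $P_{A_r}(2\e_1-2\e_{r+1})$ are identified with the set of pairs consisting of indices $1\le i\le j\le r$ together with a type $A$ partition of $2\e_1-\e_i-\e_j$ supported on the nodes $1,\dots,r$, the map $\Psi$ being this common identification; composing with $\Gamma$ then yields the asserted bijection with $\JS(\langle 2\rangle,\langle 2\rangle,r)$.
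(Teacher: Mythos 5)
Your proof is correct and is essentially the paper's own argument: the paper also routes the bijection through $P_{A_r}(2\hroot_{A_r})$, proving in Lemma~\ref{lem:AtoC} a bijection $g\colon P_{C_r}(\hroot_{C_r})\to P_{A_r}(2\hroot_{A_r})$ and then composing with Corollary~\ref{cor:hrootAtoMJS} (the $n=2$ case), and the paper's $g$ is exactly your $\Psi$ written in simple-root coordinates. The only difference is presentational: where you use the coordinate-sum invariant to locate the unique root $\e_i+\e_j$ (allowing $i=j$ for the long root $2\e_i$) and replace it uniformly by the pair $\e_i-\e_{r+1}$, $\e_j-\e_{r+1}$, the paper splits this same replacement into three cases according to the simple-root expansion and verifies surjectivity case by case, so your invariant-based verification is a tidier packaging of the identical map.
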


\begin{theorem} \label{thm:solution-new problem}
For $r\geq 4$, there is a bijection between the set of partitions of the highest root of the Lie algebra of  type $D_r$ into positive roots and the set $\mathbb{Z}_5\times \JS(\langle 1\rangle,\langle 1\rangle,r-1)$.
\end{theorem}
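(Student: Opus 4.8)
The plan is to peel each partition of the highest root $\tilde\alpha_{D_r}=\e_1+\e_2$ into a bounded ``head'' supported on the first few coordinates—which will carry the $\mathbb{Z}_5$ label—and a ``tail'' that is an ordinary type $A$ partition, and then to feed the tail into Theorem~\ref{thm:unresA} to produce the juggling factor. I would start from the coarse shape of a partition $p\in P_{D_r}(\e_1+\e_2)$: each difference root $\e_i-\e_j$ has coordinate-sum $0$ and each sum root $\e_i+\e_j$ has coordinate-sum $2$, while $\e_1+\e_2$ has coordinate-sum $2$, so $p$ contains exactly one sum root $\e_a+\e_b$, and deleting it leaves a partition of $\e_1+\e_2-\e_a-\e_b$ into difference roots, i.e.\ an element of $P_{A_{r-1}}(\e_1+\e_2-\e_a-\e_b)$. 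Summing over the choice of $\e_a+\e_b$ is exactly the Schmidt--Bincer reduction (Proposition~\ref{prop:SBrecurrence}) specialized to $\mu=\e_1+\e_2$, and by Theorem~\ref{thm:unresA} every resulting type $A$ term already counts juggling sequences. The task thus becomes to reorganize $\bigsqcup_{a<b}P_{A_{r-1}}(\e_1+\e_2-\e_a-\e_b)$ into five disjoint copies of the target juggling set.

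The source of the factor five should be local. Reading $p$ through the type $D$ conveyor model of Section~\ref{sec:othertypes-juggling}, the lone sum root is a single downward throw (Definition~\ref{defs:juggling other types}), and together with the way the two units of flow sitting at heights $1$ and $2$ are absorbed at the bottom of the conveyor it admits exactly the five patterns displayed in the type $D_4$ example of that subsection. I would turn this into an explicit five-way case split—the $\mathbb{Z}_5$ index—engineered so that after the head is removed the residual throws are confined to the higher coordinates and reassemble, via the inverse of $\Gamma$, into a single-ball type $A$ juggling sequence of length $r-1$. Well-definedness and invertibility of this map would then be verified coordinate by coordinate exactly as in the proof of Theorem~\ref{thm:prop:cor}, checking that the bounded head datum and the residual throws vary independently.

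The hard part will be making this head/tail split clean and uniform in $r$, and in particular handling the sum roots $\e_a+\e_b$ with $3\le a<b$: there the remainder $\e_1+\e_2-\e_a-\e_b$ is a genuine two-commodity flow from $\{1,2\}$ to $\{a,b\}$ rather than a single source-to-sink path, so the crossing of the two flows interacts with the head and must be redistributed among the five classes with neither collision nor omission. If a direct bijection proves stubborn, I would instead first pin down the count: expand $\wp_{D_r}(\e_1+\e_2)$ through Proposition~\ref{prop:SBrecurrence}, evaluate each type $A$ term with Corollary~\ref{cor:hrootAtoMJS}, and match the total against the juggling generating functions exploited after Table~\ref{tab:BG}—the same mechanism that settled the type $B$ and type $C$ statements in Theorems~\ref{thm:solution-problem 1} and~\ref{thm:solution-problem 2}—and only then upgrade the numerical identity to the explicit bijection with $\mathbb{Z}_5\times\JS(\langle1\rangle,\langle1\rangle,r-1)$.
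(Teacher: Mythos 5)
Your opening observation is correct and is a genuinely different starting point from the paper's: since every root $\e_i+\e_j\in\Phi^+_{D_r}$ has coordinate sum $2$ while every $\e_i-\e_j$ has coordinate sum $0$, a partition of $\hroot_{D_r}=\e_1+\e_2$ contains exactly one sum root, and deleting it leaves an element of $P_{A_{r-1}}(\e_1+\e_2-\e_a-\e_b)$. The paper does nothing of this sort in type $D$: it obtains the factor $5$ purely analytically, by setting $q=1$ in \cite[Theorem 5.2]{HIO} to get $K_{D_r}(\hroot_{D_r})=5\,K_{B_{r-2}}(\hroot_{B_{r-2}})$ (Lemma~\ref{lem:DtoBtoA}), and then chains the type-$B$ bijection of Lemma~\ref{lem:BtoA} with Corollary~\ref{cor:<1,1>toA}. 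Incidentally, your description of how Theorems~\ref{thm:solution-problem 1} and~\ref{thm:solution-problem 2} were settled is backwards: those are proved by explicit bijections, not by matching generating functions; only the type-$D$ factor of $5$ rests on a generating-function identity, and the paper explicitly leaves a combinatorial explanation of that factor as an open question in the remark following the theorem.

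The genuine gap is that your program cannot terminate at the stated target, for a numerical reason you would have hit at your own first consistency check. After deleting the unique sum root $\e_a+\e_b$, the tail is a two-source flow from $\{1,2\}$ to $\{a,b\}$, i.e.\ under $\Gamma$ a \emph{two-ball} juggling sequence, and these do not repackage into five copies of the one-ball set $\JS(\langle 1\rangle,\langle 1\rangle,r-1)$. Concretely, at $r=4$ your decomposition gives
\[
K_{D_4}(\e_1+\e_2)=\sum_{1\le a<b\le 4}K_{A_3}(\e_1+\e_2-\e_a-\e_b)=1+1+2+2+4+5=15,
\]
whereas $\lvert \mathbb{Z}_5\times \JS(\langle 1\rangle,\langle 1\rangle,3)\rvert = 5\cdot 2^{2}=20$. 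What the paper's argument actually establishes is a bijection with $\mathbb{Z}_5\times\JS(\langle 1,1\rangle,\langle 1,1\rangle,r-2)$ (two balls, length $r-2$): indeed $K_{B_{r-2}}(\hroot_{B_{r-2}})=\js(\langle 1,1\rangle,\langle 1,1\rangle,r-2)$ by Theorem~\ref{thm:solution-problem 1}, and these counts ($1,3,11,40,\ldots$, the coefficients of $(x-2x^2+x^3)/(1-5x+5x^2)$ from Table~\ref{tab:BG}) never equal $2^{r-2}=\js(\langle 1\rangle,\langle 1\rangle,r-1)$. So the printed statement is itself erroneous, and your fallback plan---expanding $\wp_{D_r}$ via Proposition~\ref{prop:SBrecurrence} and comparing with Table~\ref{tab:BG}---would have exposed this immediately; note also that Corollary~\ref{cor:hrootAtoMJS} cannot evaluate your type-$A$ terms, since the weights $\e_1+\e_2-\e_a-\e_b$ are not multiples of $\hroot_{A_{r-1}}$ (you need Theorem~\ref{thm:unresA} instead). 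If you redirect the target to $\mathbb{Z}_5\times\JS(\langle 1,1\rangle,\langle 1,1\rangle,r-2)$, your sum-root decomposition is a reasonable route toward the open problem of making the factor $5$ combinatorial; but as written the five-way ``local head'' split is asserted rather than constructed, and the five patterns you cite from the paper's $D_4$ example concern partitions of $2\e_1$, not of $\e_1+\e_2$, so they are not that factor.
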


\begin{remark}
The theorems above show that $K_{B_r}(\tilde{\alpha}_{B_r})$ and $K_{C_r}(\tilde{\alpha}_{C_r})$ are given by the number of certain juggling sequences. In the type $D$ case, a corollary of a result of \cite[Theorem 5.2]{HIO} obtained via generating functions implies that $K_{D_r}(\tilde{\alpha}_{D_r}) = 5\cdot K_{B_{r-2}}(\tilde{\alpha}_{B_{r-2}})$, which proves Theorem \ref{thm:solution-new problem}.
However, it is natural to ask if there is a more direct combinatorial proof of the formula for $K_{D_r}(\tilde{\alpha}_{D_r})$, perhaps involving a new set of juggling sequences.
\end{remark}

\emph{Proof of Theorem~\ref{thm:solution-problem 1}:} Recall that the highest root of the Lie algebras of type $A_{r+1}$ and $B_r$ are $\hroot_{A_{r+1}}=\a_1+\cdots+\a_{r+1}$ and $\hroot_{B_r}=\a_1+2\a_2+2\a_3+\cdots+2\a_r$, respectively.
We begin by establishing a bijection from the set of partitions of $\hroot_{B_{r}}$ using the positive roots in $\Phi_{B_r}^+$ to the set of partitions of  $2\hroot_{A_{r+1}} - \alpha_1 - \alpha_{r+1}$ 
using the positive roots in $\Lambda=\Phi_{A_{r+1}}^+\setminus\{\a_{r+1}\}$.
We then give a bijection between the set of partitions of $2\hroot_{A_{r+1}} - \alpha_1 - \alpha_{r+1}$ and the desired  juggling sequences.

\begin{lemma}\label{lem:BtoA}
Consider the Lie algebras of types $A_{r+1}$ and $B_r$ for $r\geq 2$, with positive roots $\Phi_{A_{r+1}}^+$ and $\Phi_{B_r}^+$, respectively. If $\Lambda=\Phi_{A_{r+1}}^+\setminus\{\a_{r+1}\}$,
then
\[\wp_{B_r}(\hroot_{B_r}) = \wp_{\Lambda}(2\hroot_{A_{r+1}} - \alpha_1 - \alpha_{r+1}).\]
\end{lemma}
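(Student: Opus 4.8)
The plan is to prove the stated identity by computing each side in standard coordinates and reducing both to the \emph{same} symmetric sum of type $A_{r-1}$ Kostant partition function values. First I would record the two weights explicitly. Using $\hroot_{A_{r+1}}=\a_1+\cdots+\a_{r+1}=\e_1-\e_{r+2}$ in $\RR^{r+2}$ together with $\a_1=\e_1-\e_2$ and $\a_{r+1}=\e_{r+1}-\e_{r+2}$ gives
\[
2\hroot_{A_{r+1}}-\a_1-\a_{r+1}=\e_1+\e_2-\e_{r+1}-\e_{r+2},
\]
while $\hroot_{B_r}=\a_1+2\a_2+\cdots+2\a_r=\e_1+\e_2$ telescopes in $\RR^{r}$. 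So I must show $K_{B_r}(\e_1+\e_2)=K_\Lambda(\e_1+\e_2-\e_{r+1}-\e_{r+2})$, where $\Lambda=\Phi_{A_{r+1}}^+\setminus\{\e_{r+1}-\e_{r+2}\}$. In both computations I would isolate a ``boundary'' contribution and an ``interior'' partition using only the type $A_{r-1}$ roots $\{\e_a-\e_b:1\le a<b\le r\}=\Phi^+_{A_{r-1}}$.

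For the $A_{r+1}$ side I would argue by a degree/flow count on the two terminal coordinates. The only $\Lambda$-roots meeting coordinate $r+1$ are the $\e_a-\e_{r+1}$ with $a\le r$ (the root $\e_{r+1}-\e_{r+2}$ that would contribute $+\e_{r+1}$ is precisely the one removed), each contributing $-1$; since the $\e_{r+1}$-coordinate of the target is $-1$, exactly one such root $\e_p-\e_{r+1}$ occurs. Likewise, the only roots meeting coordinate $r+2$ are $\e_a-\e_{r+2}$ with $a\le r$ (again $\e_{r+1}-\e_{r+2}$ is excluded), so exactly one root $\e_q-\e_{r+2}$ occurs, with $1\le p,q\le r$. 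Deleting these two roots leaves a partition in $\Phi^+_{A_{r-1}}$ of $\e_1+\e_2-\e_p-\e_q$, and this correspondence is reversible. Hence
\[
K_\Lambda(\e_1+\e_2-\e_{r+1}-\e_{r+2})=\sum_{1\le p,q\le r}K_{A_{r-1}}(\e_1+\e_2-\e_p-\e_q),
\]
the sum running over \emph{ordered} pairs because the two terminal coordinates are distinguishable.

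For the $B_r$ side I would sort roots by coordinate sum: $\e_i-\e_j$ sums to $0$, each $\e_i$ to $1$, and each $\e_i+\e_j$ to $2$. Since $\e_1+\e_2$ has coordinate sum $2$, every partition contains either a single plus-root $\e_s+\e_t$ ($s<t\le r$) or exactly two single roots $\{\e_s,\e_t\}$ ($s\le t\le r$), in each case together with an interior $\Phi^+_{A_{r-1}}$-partition of $\e_1+\e_2-\e_s-\e_t$. This yields
\[
K_{B_r}(\e_1+\e_2)=\sum_{1\le s<t\le r}K_{A_{r-1}}(\e_1+\e_2-\e_s-\e_t)+\sum_{1\le s\le t\le r}K_{A_{r-1}}(\e_1+\e_2-\e_s-\e_t).
\]
To finish, I would observe that the weight $\e_1+\e_2-\e_s-\e_t$ is symmetric in its two subtracted indices, so both expressions collapse to the single symmetric total $2\sum_{s<t}K_{A_{r-1}}(\e_1+\e_2-\e_s-\e_t)+\sum_{s}K_{A_{r-1}}(\e_1+\e_2-2\e_s)$, proving the equality. (This can be upgraded to an explicit bijection: send an ordered pair $(p,q)$ with $p<q$ to the plus-root $\e_p+\e_q$, with $p>q$ to the two drops $\{\e_q,\e_p\}$, and with $p=q$ to the repeated drop $\{\e_p,\e_p\}$, carrying along the common interior partition.)

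The routine parts are the telescoping weight computations and the coordinate-sum trichotomy. The main obstacle is making the two ``boundary/interior'' splittings fully rigorous—in particular justifying on the $A$ side that each terminal coordinate is hit by exactly one root (which is where the deletion of $\a_{r+1}$ is essential), and correctly matching the diagonal $s=t$: the ordered pairs $(p,p)$ on the $A$ side must line up with the repeated single drops (and \emph{not} with any plus-root, since $\e_s+\e_s$ is not a root) on the $B$ side, which is exactly the bookkeeping that accounts for the factor of $2$ sitting in front of $\hroot_{A_{r+1}}$.
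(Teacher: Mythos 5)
Your argument is correct, and it rests on the same structural observation as the paper's proof of Lemma \ref{lem:BtoA}: every partition on the $A_{r+1}$ side contains exactly one root hitting coordinate $r+1$ and exactly one hitting coordinate $r+2$ (in the paper's simple-root language, exactly one part $\a_i+\cdots+\a_{r+1}$ and exactly one part $\a_j+\cdots+\a_r$, the removal of $\a_{r+1}$ from $\Lambda$ forcing both), with the remainder a partition supported on the first $r$ coordinates. Where you genuinely diverge is the mechanism for concluding equality. The paper never introduces type $A_{r-1}$ at all: it builds an explicit map $g\colon P_\Lambda(\mu)\to P_{B_r}(\hroot_{B_r})$ that merges the two distinguished parts into the type-$B$ root $\e_i+\e_j$ when $i<j$ and otherwise converts them into the two drops $\e_i$ and $\e_j$, then checks injectivity and surjectivity by a case analysis on partitions of $\hroot_{B_r}$. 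You instead factor both sides through a common third quantity, reducing each to sums of $K_{A_{r-1}}$ values indexed by boundary data and matching
\[
\sum_{1\le p,q\le r}K_{A_{r-1}}(\e_1+\e_2-\e_p-\e_q)
=\sum_{1\le s<t\le r}K_{A_{r-1}}(\e_1+\e_2-\e_s-\e_t)+\sum_{1\le s\le t\le r}K_{A_{r-1}}(\e_1+\e_2-\e_s-\e_t)
\]
by symmetry in the subtracted indices. Your route makes the count-level identity and the diagonal bookkeeping (why $(p,p)$ must pair with a repeated drop and never a plus-root, since $\e_s+\e_s$ is not a $B_r$ root) completely transparent, and it explains cleanly where the coefficient $2$ in $2\hroot_{A_{r+1}}$ and the deletion of $\a_{r+1}$ enter; the paper's route produces the explicit bijection that the downstream statement, Theorem \ref{thm:solution-problem 1}, actually asserts. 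Your closing parenthetical recovers such a bijection, and up to exchanging the roles of the two terminal coordinates (equivalently, which half of the off-diagonal ordered pairs is sent to plus-roots) it is exactly the paper's map $g$; so nothing is missing, but if your lemma is meant to feed the bijective statement of Theorem \ref{thm:solution-problem 1}, that parenthetical should be promoted from a remark to part of the argument proper.
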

\begin{proof}
Let \[\mu = 2\hroot_{A_{r+1}} - \alpha_1 - \alpha_{r+1}=\alpha_1 + 2\alpha_2 + \cdots + 2\alpha_r + \alpha_{r+1}\] and let 
$p\in P_{\Lambda}(\mu)$. Then $p$ must contain both
\begin{enumerate}
    \item a positive root $\alpha_i + \cdots + \alpha_{r+1}$, with $i<r+1$ and 
    \label{step:1}
    \item a positive root $\alpha_j + \cdots + \alpha_r$, with $j\leq r$.\label{step:2}
\end{enumerate}
 Moreover, note that $\a_i+\cdots+\a_{j-1}+2\a_j+\cdots+2\a_r$ cannot appear in $P_{\Lambda}(\mu)$. 
Define a map $g:P_{\Lambda}(\mu)\to P_{B_r}(\hroot_{B_r})$ as follows. If the parts of $p\in P_{\Lambda}(\mu)$ described in \eqref{step:1} and \eqref{step:2} satisfy $i < j$, then let $g(p)$ be the partition of $\hroot_{B_r}$ that replaces the parts $\alpha_i + \cdots + \alpha_{r+1}$ and $\alpha_j + \cdots + \alpha_r$ in $p$ by the part  $\alpha_i + \cdots + \alpha_{j-1} + 2\alpha_j + \cdots + 2\alpha_r$.  On the other hand, if $i \geq j$, then let $g(p)$ be the partition of $\hroot_{B_r}$ that replaces the part $\alpha_i + \cdots + \alpha_{r+1}$ by $\alpha_i+\cdots+\alpha_r$.  In both cases, all other positive roots in $\Phi_{A_r}^+$ are taken to their analogous positive roots in $\Phi_{B_r}^+$ by taking every simple roots $\a_i\in\Phi_{A_r}^+$ to the simple root $\a_i\in\Phi_{B_r}^+$ for all $1\leq i\leq r$, and extending this linearly to all positive roots in $\Phi_{A_r}^+$ (see Remark~\ref{rem:roots-different-types}).

For any $p\in P_{\Lambda}(\mu)$, $g(p)$ is a partition of $\hroot_{B_r}$ and the function is injective by construction. We now establish that $g$ is surjective. To see this, take a partition $p=\{\beta_1,\beta_2,\ldots,\beta_l\}$ of $\hroot_{B_r}$, where $\beta_i\in\Phi_{B_r}^+$ for all $1\leq i\leq l$. Note that either 
\begin{enumerate}
    \item[Case 1:] there exists $\beta\in p$ such that $\beta=\alpha_i + \cdots + \alpha_{j-1} + 2\alpha_j + \cdots + 2\alpha_r$ with $i<j$, or\label{case:1} 
    \item[Case 2:] there exist $\beta,\beta'\in p$ such that $\beta=\alpha_i + \cdots +\alpha_r$ and $\beta'=\alpha_j + \cdots + \alpha_r$, where we assume without loss of generality that $i\leq j$, and when $i=j$ then both of these positive roots are the simple root $\a_{r}$.
\end{enumerate}

In Case 1, consider the partition \[p'=\left(p\setminus\{\alpha_i + \cdots + \alpha_{j-1} + 2\alpha_j + \cdots + 2\alpha_r\}\right)\cup\{\a_i+\cdots+\a_{r+1},\a_j+\cdots+\a_{r}\}.\] Note $p'$ is a partition of $\mu$, hence $p'\in P_{\Lambda}(\mu)$, and $g(p')=p$ as desired. 
In Case 2, consider the partition \[p'=\left(p\setminus\{\alpha_i + \cdots + \alpha_r\}\right)\cup\{\a_i+\cdots+\a_{r+1}\}.\] Then $p'$ is a partition of $\mu$ and $g(p')=p$ as desired.

This establishes that $g$ is a bijection between $P_\Lambda(\mu)$ and $P_{B_r}(\hroot_{B_r})$. Thus $K_\Lambda(\mu)=K_{B_r}(\hroot_{B_r})$.
\end{proof}

\begin{corollary}\label{cor:<1,1>toA}
Consider the Lie algebra of type $A_{r+1}$ with $r\geq 2$, with positive roots $\Phi_{A_{r+1}}^+$. 
If $\Lambda=\Phi_{A_{r+1}}^+\setminus\{\a_{r+1}\}$, then
\[\wp_{\Lambda}(2\hroot_{A_{r+1}} - \alpha_1 - \alpha_{r+1}) = \js(\langle 1, 1 \rangle, \langle 1, 1 \rangle, r).\]
\end{corollary}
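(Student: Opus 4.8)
The plan is to recognize this corollary as a direct instance of Theorem~\ref{thm:jugtopart}, read in reverse: rather than starting from a partition and producing a juggling sequence, I would start from the count $\js(\langle 1,1\rangle,\langle 1,1\rangle,r)$ and let Theorem~\ref{thm:jugtopart} tell me exactly which restricted partition function it equals. The first step is bookkeeping: compute the standard basis representation of the target weight. Since $\hroot_{A_{r+1}}=\e_1-\e_{r+2}$, $\a_1=\e_1-\e_2$, and $\a_{r+1}=\e_{r+1}-\e_{r+2}$, I obtain
\[
2\hroot_{A_{r+1}}-\a_1-\a_{r+1}=\e_1+\e_2-\e_{r+1}-\e_{r+2},
\]
a weight of type $A_{r+1}$ living in $\mathbb{R}^{r+2}$.

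Next I would apply Theorem~\ref{thm:jugtopart} to the family $\JS(\langle 1,1\rangle,\langle 1,1\rangle,r)$, setting $\bfa=\bfb=\langle 1,1\rangle$ (so $a_1=a_2=b_1=b_2=1$ and $s=t=2$) and $n=r$, with no hand capacity constraint. The theorem produces the net change vector $\delta=a_1\e_1+a_2\e_2-(b_1\e_{n+1}+b_2\e_{n+2})=\e_1+\e_2-\e_{r+1}-\e_{r+2}$, which is precisely the weight computed above; its dimension is $r+2$, so the relevant Lie type is $A_{r+1}$, matching the statement. Because only two balls are in play (and all initial entries are nonnegative), the hand capacity is irrelevant, so the set $Q_\Lambda(\delta)$ in Theorem~\ref{thm:jugtopart} collapses to $P_\Lambda(\delta)$ and the theorem yields $\js(\langle 1,1\rangle,\langle 1,1\rangle,r)=K_\Lambda(\delta)$.

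It then remains to check that the restriction set $\Lambda$ supplied by Theorem~\ref{thm:jugtopart} agrees with the $\Lambda=\Phi_{A_{r+1}}^+\setminus\{\a_{r+1}\}$ in the statement. Theorem~\ref{thm:jugtopart} takes $\Lambda=\{\e_i-\e_j\in\Phi_{A_{r+1}}^+:1\le i<j\le r+2,\ i\le n\}$ with $n=r$; the only positive root of type $A_{r+1}$ whose source index is $i=r+1$ is $\e_{r+1}-\e_{r+2}=\a_{r+1}$, so the condition $i\le r$ deletes exactly $\a_{r+1}$ and nothing else. Thus the two descriptions of $\Lambda$ coincide, and combining this with the previous paragraph gives $\wp_\Lambda(2\hroot_{A_{r+1}}-\a_1-\a_{r+1})=\js(\langle 1,1\rangle,\langle 1,1\rangle,r)$, as claimed.

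I do not expect a genuine obstacle here: there is no new construction, only an invocation of Theorem~\ref{thm:jugtopart} together with two verifications (that the net change vector equals the stated weight, and that the index condition $i\le r$ is the same as deleting $\a_{r+1}$). The one place to be careful is the shift in indices, since the ambient Lie type is $A_{r+1}$ (dimension $r+2$) while the juggling length is $n=r$; keeping the roles of $r$, $r+1$, and $r+2$ straight is the only real subtlety, and it is also where the hypothesis $r\ge 2$ enters, ensuring the two sources $\e_1,\e_2$ and two sinks $\e_{r+1},\e_{r+2}$ occupy four distinct indices.
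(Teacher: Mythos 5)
Your proposal is correct and takes essentially the same route as the paper's proof, which likewise computes the net change vector $\delta(\langle 1,1\rangle,\langle 1,1\rangle,r)=\e_1+\e_2-\e_{r+1}-\e_{r+2}=2\hroot_{A_{r+1}}-\a_1-\a_{r+1}$ (via Theorem~\ref{thm:netheight}) and then invokes Theorem~\ref{thm:jugtopart}. The only difference is one of detail: your explicit verification that the restriction set $\Lambda$ supplied by Theorem~\ref{thm:jugtopart} (source index $i\le r$) deletes exactly the root $\a_{r+1}=\e_{r+1}-\e_{r+2}$ is left implicit in the paper, but it is the same argument.
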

\begin{proof}
By Theorem \ref{thm:netheight} we know that  $\delta(\langle 1, 1 \rangle, \langle 1, 1 \rangle, r)= \varepsilon_1+\varepsilon_2-\varepsilon_{r+1}-\varepsilon_{r+2}=2\hroot_{A_{r+1}}-\alpha_1 - \alpha_{r+1}$ and the result follows from Theorem \ref{thm:jugtopart}.
\end{proof}

Finally, Theorem~\ref{thm:solution-problem 1} follows from Lemma \ref{lem:BtoA} and Corollary \ref{cor:<1,1>toA}.

\emph{Proof of Theorem~\ref{thm:solution-problem 2}:} Recall that the highest root of the Lie algebra of type $C_r$ is  $\hroot_{C_r}=2\a_1+2\a_2+\cdots+2\a_{r-1}+\a_r$.
As in the previous section, we establish a bijection from the set of partitions of $\hroot_{C_{r}}$ using the positive roots in $\Phi_{C_r}^+$ to the set of partitions of  $2\hroot_{A_{r}}$ 
using the positive roots in $\Phi_{A_{r}}^+$.
We then give a bijection between the set of partitions of $2\hroot_{A_{r}}$ and the desired  juggling sequences.

\begin{lemma}\label{lem:AtoC}
If $r\geq 3$, then 
$\wp_{A_r}(2\hroot_{A_r})=\wp_{C_r}(\hroot_{C_r}).$
\end{lemma}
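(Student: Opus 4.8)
The plan is to construct an explicit bijection between $P_{A_r}(2\tilde\alpha_{A_r})$ and $P_{C_r}(\tilde\alpha_{C_r})$, in the same spirit as Lemma~\ref{lem:BtoA}, rather than invoking the Schmidt--Bincer recurrence of Proposition~\ref{prop:SBrecurrence}. First I would record the two weights in standard coordinates: since $\tilde\alpha_{A_r}=\e_1-\e_{r+1}$ we have $2\tilde\alpha_{A_r}=2\e_1-2\e_{r+1}$, while $\tilde\alpha_{C_r}=2\e_1$. The positive roots of $C_r$ split into the ``type $A$'' roots $\e_i-\e_j$ (with $1\leq i<j\leq r$) and the ``long'' roots $\e_i+\e_j$ and $2\e_i$.

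The key structural observation, which I expect to be the crux, is a coordinate-sum count. Every $C_r$ root $\e_i-\e_j$ has coordinate sum $0$, whereas each long root $\e_i+\e_j$ or $2\e_i$ has coordinate sum $2$; since the target $2\e_1$ has coordinate sum $2$, every partition $p\in P_{C_r}(\tilde\alpha_{C_r})$ contains \emph{exactly one} long root, with all remaining parts of the form $\e_i-\e_j$, $j\leq r$. Dually, in $A_r$ the coefficient of $\e_{r+1}$ in $2\e_1-2\e_{r+1}$ is $-2$ and $\e_{r+1}$ is the maximal index, so every partition $q\in P_{A_r}(2\tilde\alpha_{A_r})$ contains \emph{exactly two} parts of the form $\e_a-\e_{r+1}$ (counted with multiplicity), with all other parts satisfying $j\leq r$.

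Given these two normal forms, I would define the map $P_{C_r}(\tilde\alpha_{C_r})\to P_{A_r}(2\tilde\alpha_{A_r})$ by ``unfolding'' the unique long root and fixing every other part: send $2\e_i\mapsto\{\e_i-\e_{r+1},\,\e_i-\e_{r+1}\}$ and send $\e_i+\e_j\mapsto\{\e_i-\e_{r+1},\,\e_j-\e_{r+1}\}$. A short check confirms the image is a partition of $2\e_1-2\e_{r+1}$ using $\Phi_{A_r}^+$: the two new parts contribute $2\e_i-2\e_{r+1}$ (resp.\ $\e_i+\e_j-2\e_{r+1}$), while the untouched parts sum to $2\e_1-2\e_i$ (resp.\ $2\e_1-\e_i-\e_j$). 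The inverse reverses this: a partition of $2\e_1-2\e_{r+1}$ has exactly two parts $\e_a-\e_{r+1}$ and $\e_b-\e_{r+1}$ with $a\leq b$; replace them by $2\e_a$ if $a=b$ and by $\e_a+\e_b$ if $a<b$, keeping all other parts. Because $a,b\leq r$, these are genuine $C_r$ roots, and the two operations are mutually inverse, so the correspondence is a bijection and $\wp_{A_r}(2\tilde\alpha_{A_r})=\wp_{C_r}(\tilde\alpha_{C_r})$.

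I expect the only real subtlety to lie in the bookkeeping of the coordinate-sum argument---verifying that no partition can contain zero or two long roots, and that the maximality of the index $r+1$ forces exactly two parts ending in $\e_{r+1}$---after which the unfolding map and its inverse are routine to verify. It would also be worth noting the boundary case $p=\{2\e_1\}$, which maps to the two-element multiset $\{\e_1-\e_{r+1},\,\e_1-\e_{r+1}\}$, to confirm the map behaves correctly when there are no type $A$ parts at all.
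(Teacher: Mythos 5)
Your proof is correct and is essentially the same as the paper's: both arguments unfold the unique coordinate-sum-$2$ root of the $C_r$ partition via $2\e_i \mapsto \{\e_i-\e_{r+1},\,\e_i-\e_{r+1}\}$ and $\e_i+\e_j \mapsto \{\e_i-\e_{r+1},\,\e_j-\e_{r+1}\}$, keeping all other parts, and invert by recombining the exactly two parts of the form $\e_a-\e_{r+1}$. The only difference is presentational: the paper writes this same bijection in simple-root coordinates, which forces its three-case analysis (there the roots $\e_i+\e_r$ masquerade as type-$A$ strings $\alpha_i+\cdots+\alpha_r$, handled by adjoining $\alpha_r$), whereas your standard-coordinate formulation collapses all cases into one uniform rule justified by the coordinate-sum count.
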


\newcommand{\CtoA}{g}
\begin{proof}
We provide a bijection from the sets of partitions $P_{C_r}(\hroot_{C_r})$ and $P_{A_r}(2\hroot_{A_r})$. Any partition $p\in P_{C_r}(\hroot_{C_{r}})$ either contains
\begin{enumerate}
    \item exactly one positive root of the form  $\a_i+\cdots+\a_{j-1}+2\a_j +2\a_{j+1}+\cdots+2\a_{r-1}+\a_{r}$, with $1\leq i<j\leq r-1$ \label{step:c1}, or 
    \item exactly one positive root of the 
     form  $2\a_i+\cdots+2\a_{r-1}+\a_{r}$, with $1\leq i\leq r-1$ \label{step:c2}, or 
    \item only positive roots of the form $\a_i$ for $1\leq i\leq r$ and $\alpha_i + \cdots + \alpha_j$ with $1\leq i\leq j\leq r$, with exactly one 
    such root containing $\a_r$.\label{step:c3}
\end{enumerate}

We now define a map $g:P_{C_r}(\hroot_{C_r})\to P_{A_r}(\hroot_{A_r})$ as follows. If $p\in P_{C_r}(\hroot_{C_r})$ contains the part as in  \eqref{step:c1}, then let 
\[g(p)=\left(p\setminus\{\a_i+\cdots+\a_{j-1}+2\a_j +2\a_{j+1}+\cdots+2\a_{r-1}+\a_{r}\}\right)\cup\{\a_i+\cdots+\a_{r},\a_j+\cdots+\a_{r}\}.\]
If $p\in P_{C_r}(\hroot_{C_r})$ contains a part as in  \eqref{step:c2}, then let 
\[g(p)=\left(p\setminus\{2\a_i+\cdots+2\a_{r-1}+\a_{r}\}\right)\cup\{\a_i+\cdots+\a_{r},\a_i+\cdots+\a_{r}\}.\]
If $p\in P_{C_r}(\hroot_{C_r})$ contains only parts as in  \eqref{step:c3}, then let 
\[g(p)=p\cup\{\a_r\}.\]

By construction, for any $p\in P_{C_r}(\hroot_{C_r})$, $g(p)$ is a partition of $2\hroot_{A_r}$, so the function is injective. We now establish that $g$ is surjective. To see this, take a partition $p=\{\beta_1,\beta_2,\ldots,\beta_l\}\in P_{A_r}(2\hroot_{A_r})$, where $\beta_i\in\Phi_{A_r}^+$ for all $1\leq i\leq l$. Note that either 
\begin{enumerate}
    \item[Case 1:] $\a_r$ appears twice as a part in $p$, or  \label{case:c1} 
    \item[Case 2:] $\a_r$ and $\a_i+\cdots+\a_r$ appear as parts in $p$, with $1\leq i\leq r-1$, or  \label{case:c2}
    \item[Case 3:] $\a_i+\cdots+\a_r$ and $\a_j+\cdots+\a_r$ appear as parts in $p$, with $1\leq i,j\leq r-1$. \label{case:c3} 
\end{enumerate} 

In Case 1 and Case 2, consider the partition $p'=p\setminus\{\a_r\}$ as positive roots in $\Phi_{C_r}^+$. Then $p'$ is a partition of $\hroot_{C_r}$ and $g(p')=p$ as desired.

In Case 3, if $i=j$, then consider the partition \[p'=\left(p\setminus\{\a_i+\cdots+\a_r,\a_i+\cdots+\a_r\}\right)\cup\{2\a_i+\cdots+2\a_{r-1}+\a_r\}\] as positive roots in $\Phi_{C_r}^+$. Then $p'$ is a partition of $\hroot_{C_r}$ and $g(p')=p$ as desired. If $i\ne j$ then without loss of generality assume $i<j$.  Consider the partition \[p'=\left(p\setminus\{\a_i+\cdots+\a_r,\a_j+\cdots+\a_r\}\right)\cup\{\a_i+\cdots+\a_{j-1}+2\a_j+\cdots+2\a_{r-1}+\a_r\},\] positive roots in $\Phi_{C_r}^+$. Then $p'$ is a partition of $\hroot_{C_r}$ and $g(p')=p$ as desired.

This establishes that $g$ is a bijection between $P_{C_r}(\hroot_{C_r})$ and $P_{A_r}(2\hroot_{B_r})$. Thus $K_{C_r}(\hroot_{C_r})=K_{A_r}(2\hroot_{A_r})$, as desired.
\end{proof}

Finally, Theorem~\ref{thm:solution-problem 2} follows from Corollary \ref{cor:hrootAtoMJS} and Lemma \ref{lem:AtoC}.

\emph{Proof of Theorem \ref{thm:solution-new problem}:} Recall that the highest root of the Lie algebra of type $D_r$ is  $\hroot_{D_r}=\a_1+2\a_2+\cdots+2\a_{r-2}+\a_{r-1}+\a_r$.

\begin{lemma}\label{lem:DtoBtoA}
If $r\geq 4$ and $\Lambda=\Phi_{A_{r-1}}^+\setminus\{\a_{r-1}\}$, then \[K_{D_r}(\hroot_{D_r})=5\cdot  K_{B_{r-2}}(\hroot_{B_{r-2}})=5\cdot K_\Lambda(\hroot_{A_{r-1}}-\a_{1}-\a_{r-1}).\]
\end{lemma}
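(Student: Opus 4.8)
The plan is to prove the two equalities in the displayed formula separately, following the chain $D_r \to B_{r-2} \to A_{r-1}$ recorded in the name of the lemma. The second equality is immediate: it is exactly Lemma~\ref{lem:BtoA} applied with $r$ replaced by $r-2$. Since $r\ge 4$ forces $r-2\ge 2$, that lemma applies to the pair $(A_{r-1},B_{r-2})$ and yields $K_{B_{r-2}}(\hroot_{B_{r-2}}) = K_\Lambda(2\hroot_{A_{r-1}}-\alpha_1-\alpha_{r-1})$ with $\Lambda = \Phi_{A_{r-1}}^+\setminus\{\alpha_{r-1}\}$, which is precisely the restricted root set named in the statement. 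So the substance of the argument lies in the first equality.

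For the first equality, $K_{D_r}(\hroot_{D_r}) = 5\,K_{B_{r-2}}(\hroot_{B_{r-2}})$, I would follow the generating-function comparison behind the remark following Theorem~\ref{thm:solution-new problem}. By Theorem~\ref{thm:solution-problem 1} and the row for state $\langle 1,1\rangle$ with $m=2$ in Table~\ref{tab:BG}, one has $K_{B_s}(\hroot_{B_s}) = [x^s]\,\dfrac{x-2x^2+x^3}{1-5x+5x^2}$ for every $s\ge 2$. Harris--Insko--Omar \cite[Theorem~5.2]{HIO} provide the type-$D$ companion $F_D(x):=\sum_{r\ge 4}K_{D_r}(\hroot_{D_r})\,x^r$, and the desired equality is equivalent to the rational-function identity $F_D(x) = 5x^2\cdot\dfrac{x-2x^2+x^3}{1-5x+5x^2}$ in all degrees $r\ge 4$; I would verify it by clearing denominators and matching numerators, or by extracting the coefficient of $x^r$ on each side. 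The smallest case is reassuring: $K_{D_4}(\hroot_{D_4}) = 15 = 5\cdot 3 = 5\,K_{B_2}(\hroot_{B_2})$.

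The hard part is this first equality, and the difficulty is conceptual rather than computational: the argument above imports the type-$D$ generating function of \cite{HIO} instead of exhibiting a $5$-to-$1$ correspondence, which is exactly the gap the authors flag. For a self-contained combinatorial proof I would start from the Schmidt--Bincer reduction (Proposition~\ref{prop:SBrecurrence}). A coordinate-sum count shows that every partition of $\hroot_{D_r}=\e_1+\e_2$ uses exactly one root of the form $\e_i+\e_j$ (each such root adds $2$ to the coordinate sum while every $\e_a-\e_b$ adds $0$, and the target has coordinate sum $2$), so $K_{D_r}(\hroot_{D_r}) = \sum_{1\le i<j\le r} K_{A_{r-1}}(\e_1+\e_2-\e_i-\e_j)$, and one can expand $K_{B_{r-2}}(\hroot_{B_{r-2}})$ analogously. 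Converting this expansion into the clean factor of $5$ and the rank shift by $2$ — ideally by realizing the group $\mathbb{Z}_5$ of Theorem~\ref{thm:solution-new problem} as an explicit five-to-one map on partitions — is where a bijective proof would live, and I expect this to be the genuine obstacle.
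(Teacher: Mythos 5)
Your proposal is correct and is essentially the paper's own proof: the paper likewise obtains the second equality by applying Lemma~\ref{lem:BtoA} with $r$ replaced by $r-2$, and obtains the first equality by setting $q=1$ in \cite[Theorem 5.2]{HIO}, which is exactly the generating-function input that your rational-function check repackages (and, like you, the paper gives no bijective explanation of the factor $5$, flagging that as an open direction in the remark following Theorem~\ref{thm:solution-new problem}). One small point in your favor: the weight produced by Lemma~\ref{lem:BtoA} is $2\hroot_{A_{r-1}}-\a_1-\a_{r-1}$, as you wrote, so the missing factor of $2$ in the lemma's displayed statement is a typo in the paper rather than a defect in your argument.
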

\begin{proof}
The first equality follows by setting $q=1$ in \cite[Theorem 5.2]{HIO} and 
the second equality follows from Lemma \ref{lem:BtoA}.
\end{proof}

Finally, Theorem~\ref{thm:solution-new problem} follows by replacing $r+1$ with $r-1$ in Corollary \ref{cor:<1,1>toA} and from Lemma~\ref{lem:DtoBtoA}.

\section{Applications, connections, and future work}
\label{sec:connections}
Having found a link between juggling sequences and Kostant's partition function we provide some applications and connections between both areas. We provide open questions throughout.

\subsection{New juggling concepts}
\subsubsection{The juggling polytope}
Kostant's partition function can be viewed as counting lattice points of certain polytopes called {\em flow polytopes} \cites{BBCV,TP,MM1}. Because of our main correspondence we can similarly define a polytope that represents juggling sequences.  We refer the reader to \cite{CCD} for background on polytopes.

\begin{definition}[{{\cite[Section 2]{BB}}}] \label{def:flowpoly}
If $\mu$ is a weight of a Lie algebra of type $A_r$, then a {\em type $A_r$ flow} with {\em netflow} $\mu$ is a function $f:\Phi_{A_r}^+ \to \mathbb{R}_{\geq 0}$ such that $\sum_{\beta \in \Phi^+} f(\beta) \beta = \mu$. 
We let $\mathcal{F}_{A_r}(\mu)$ be the set of type $A_r$ flows with netflow $\mu$. The set $\mathcal{F}_{A_r}(\mu)$ forms a polytope called a {\em flow polytope}. 
\end{definition}

The {\em lattice points} of $\mathcal{F}_{A_r}(\mu)$ (the integral type $A_r$ flows with netflow $\mu$) correspond to partitions of the weight $\mu$. Thus, the number of lattice points of $\mathcal{F}_{A_r}(\mu)$ is given by Kostant's partition function $K_{A_r}(\mu)$.

\begin{definition}
A {\em real-valued juggling state} ($\mathbb{R}$-juggling state) is a vector $\bfs=\langle s_1,\ldots,s_h \rangle$ where each $s_i$ is a nonnegative real number representing $s_i$ fragments of balls at height $i$.
\end{definition}

\begin{definition}
A {\em real-valued juggling sequence} ($\mathbb{R}$-juggling sequence) is a sequence of $\mathbb{R}$-juggling states $S=( \bfs_0,\bfs_1,\ldots,\bfs_n)$ where successive states $\bfs_{i-1} = \langle s_1,\ldots,s_h\rangle$ and $\bfs_i$ satisfy 
\[
\bfs_{i} =  \langle s_2+b_1,\ldots,s_h+b_{h-1}, b_h,\ldots,b_{h'}\rangle,
\]
where $(b_1,\ldots,b_{h'})$ is a tuple of nonnegative real numbers satisfying $\sum_{i=1}^{h'} b_i = s_1$.
\end{definition}

We interpret $\mathbb{R}$-juggling sequences  as redistributing fragments of balls that were at height one into fragments of balls at different heights. Figure \ref{fig:real juggling example} illustrates the real-valued juggling sequence 
$S=( 
\langle 1 \rangle,
\langle 1/2,1/4,1/4\rangle,
\langle 1/2,1/2\rangle,
\langle 1\rangle)$.
\begin{figure}[htbp]
    \centering
    \includegraphics[scale= 2.5]{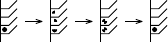}
    \caption{An $\mathbb{R}$-juggling sequence.}
    \label{fig:real juggling example}
\end{figure}

Let $\RJS(\bfa, \bfb,m,n)$ be the set of  $\mathbb{R}$-juggling sequences with initial state $\bfa$, terminal state $\bfb$, hand capacity $m$, and length $n$. Note that $\JS(\bfa, \bfb,n,m)$ is the set of integral juggling sequences in $\RJS(\bfa, \bfb, n,m)$. As before, if there is no hand capacity constraint we omit $m$. The next result shows that $\RJS(\langle \mu_1,\ldots,\mu_r\rangle, \langle \mu_1+\cdots + \mu_r\rangle,r)$ is integrally equivalent (see \cite[\S 2]{MMS}) to a flow polytope.

\begin{proposition} \label{prop:rjs is a polytope}
Let $\mu=\langle \mu_1,\ldots,\mu_{r+1}\rangle$ be a weight of a Lie algebra of type $A_r$.  The set $\RJS(\langle \mu_1,\ldots,\mu_r\rangle, \langle \mu_1+\cdots + \mu_r \rangle,r)$ is a convex polytope integrally equivalent to the flow polytope $\mathcal{F}_{A_r}(\mu)$.
\end{proposition}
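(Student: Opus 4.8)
The plan is to realize the integral equivalence through the root-to-throw correspondence $\Gamma$, showing that the natural coordinates on the two polytopes agree up to a relabeling of coordinates. First I would fix the ambient lattices: embed $\mathcal{F}_{A_r}(\mu)$ in $\mathbb{R}^{\Phi_{A_r}^+}$ with its coordinate lattice $\mathbb{Z}^{\Phi_{A_r}^+}$ (whose lattice points inside the polytope are exactly the partitions in $P_{A_r}(\mu)$), and embed each $\mathbb{R}$-juggling sequence $S=(\bfs_0,\ldots,\bfs_r)$ as the concatenation of its zero-padded state vectors in $(\mathbb{R}^{r+1})^{r+1}$, whose integer points recover the integral sequences in $\JS(\langle\mu_1,\ldots,\mu_r\rangle,\langle\mu_1+\cdots+\mu_r\rangle,r)$.

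Next I would introduce throw coordinates. To $S$ associate the vector $x(S)=(x_{i,j})$, where $x_{i,j}=(\bfs_i)_j-(\bfs_{i-1})_{j+1}$ is the amount thrown at time $i$ to height $j$. The assignment $S\mapsto x(S)$ is integer-affine, and Equation~\eqref{map:partition2jugglingsequence}, read with real throw amounts in place of the multiplicities $m(\cdot,\cdot)$, reconstructs $S$ from $x(S)$ by an integer-affine formula (the fixed initial state $\bfa=\langle\mu_1,\ldots,\mu_r\rangle$ supplies the constant term). Hence $S\mapsto x(S)$ is an affine bijection from the state embedding of $\RJS(\langle\mu_1,\ldots,\mu_r\rangle,\langle\mu_1+\cdots+\mu_r\rangle,r)$ onto the set of admissible throw vectors that carries integer points to integer points in both directions; the reconstructed states are automatically nonnegative because the initial entries $\mu_i$ and all throw amounts are nonnegative.

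The heart of the argument is to match the defining constraints. Under the coordinate identification $x_{i,j}\leftrightarrow f(\e_i-\e_{i+j})$ induced by $\Gamma$ (a permutation of coordinates, hence unimodular), I would verify three things: the height bound $i+j\leq r+1$ on admissible throws is exactly the condition $\e_i-\e_{i+j}\in\Phi_{A_r}^+$; nonnegativity of the throw matches nonnegativity of the flow; and, crucially, the juggling-validity condition at each time (total amount thrown equals the amount present at height one) is precisely the netflow equation at the corresponding node. Concretely, the height-one entry of $\bfs_{k-1}$ equals $\mu_k+\sum_{i<k}x_{i,k-i}$ while the total thrown at time $k$ is $\sum_j x_{k,j}$, so validity reads $\sum_j x_{k,j}-\sum_{i<k}x_{i,k-i}=\mu_k$, which is the conservation law $\sum_{j>k}f(\e_k-\e_j)-\sum_{i<k}f(\e_i-\e_k)=\mu_k$. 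The terminal constraint $\bfs_r=\langle\mu_1+\cdots+\mu_r\rangle$ is the node-$(r+1)$ equation, which is redundant since the node equations sum to $\sum_k\mu_k=0$.

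Composing the affine bijection $S\mapsto x(S)$ with the unimodular relabeling then yields an integral equivalence between the state embedding of $\RJS(\langle\mu_1,\ldots,\mu_r\rangle,\langle\mu_1+\cdots+\mu_r\rangle,r)$ and $\mathcal{F}_{A_r}(\mu)$; since the latter is a convex polytope, so is the former, and the induced bijection on lattice points is exactly the map $\Gamma$ of Theorem~\ref{thm:unresAA}. I expect the main obstacle to be the bookkeeping in this constraint-matching step, namely verifying that the juggling recurrence translates term by term into the flow conservation laws and that both coordinate changes are genuinely unimodular, rather than any conceptual difficulty.
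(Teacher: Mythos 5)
Your proposal is correct and follows essentially the same route as the paper: the paper also extends $\Gamma$ by reading Equation~\eqref{map:partition2jugglingsequence} with flow values $f(\e_j-\e_{i+k})$ in place of the multiplicities $m(j,i+k)$, and its inverse $f(\e_j-\e_k)=({\bfs_j})_{k-j}-({\bfs}_{j-1})_{k-j+1}$ is exactly your throw-coordinate map $x_{i,j}$, concluding as you do that this affine bijection preserves lattices. Your explicit matching of the juggling-validity conditions with the netflow equations (and the redundancy of the terminal node) is detail the paper delegates to the proof of Theorem~\ref{thm:prop:cor}, not a different argument.
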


\begin{proof}
We extend the bijection $\Gamma$ from Theorem~\ref{thm:prop:cor} to a bijection
\[
\Gamma: \mathcal{F}_{A_r}(\mu) \to \RJS(\langle \mu_1,\ldots,\mu_r\rangle, \langle \mu_1+\cdots + \mu_r\rangle,r),
\]
as follows. Given a flow $f$ in $\mathcal{F}_{A_r}(\mu)$, we let $R:=\Gamma(f)$ be the $\mathbb{R}$-juggling sequence $R=(\bfs_0,\hdots,\bfs_r)$ defined similarly to Equation~\eqref{map:partition2jugglingsequence} as
\begin{center}    
\resizebox{\textwidth}{!}{
$
    \bfs_{i}=\begin{cases}\langle \mu_1,\mu_2,\mu_3,\ldots, \mu_{r}\rangle&\mbox{if $i=0$}\\\langle \mu_{i+1},\mu_{i+2},\ldots, \mu_{r}\rangle+\left\langle\displaystyle\sum_{j=1}^i f(\e_j-\e_{i+1}),\displaystyle\sum_{j=1}^i f(\e_j-\e_{i+2}),\ldots,\displaystyle\sum_{j=1}^i f(\e_j-\e_{r+1})\right\rangle&\mbox{if $1\leq i\leq r$.}
    \end{cases}
$
}
\end{center}
As in the proof of Theorem~\ref{thm:prop:cor}, we have that $\Gamma(f) \in \RJS(\langle \mu_1,\ldots,\mu_r\rangle, \langle \mu_1+\cdots+\mu_r\rangle,r)$ and that the inverse function is well defined. That is, $\Gamma^{-1}$ is the map $R\mapsto f$ where $f(\e_j-\e_k) = ({\bfs_j})_{k-j}-({\bfs}_{j-1})_{k-j+1}$, where $({\bfs_j})_{i}$ denotes the $i$th entry of the juggling sequence ${\bfs_j}$.
The map $\Gamma$ is an affine transformation that is a bijection between $\mathcal{F}_{A_r}(\mu)$ and $\RJS(\langle \mu_1,\ldots,\mu_r\rangle, \langle \mu_1+\cdots + \mu_r\rangle, r)$ and preserves their lattices. 
\end{proof}

Because of this result, the set  $\RJS(\langle \mu_1,\ldots,\mu_r \rangle, \langle \mu_1+\cdots + \mu_r\rangle,r)$ will be called a {\em juggling polytope}. By standard properties of polytopes we obtain the following polynomiality result for juggling sequences.

\begin{corollary}\label{cor:polynomiality}
Let $\mu_i \in \mathbb{N}$ for $i=1,\ldots,r$. The function $\js(\langle t\mu_1,\ldots,t\mu_r\rangle, \langle t(\mu_1+\cdots + \mu_r)\rangle,r)$, defined for positive integers $t$,  is a polynomial in $t$ with degree and leading term equal to the dimension and volume of the juggling polytope $\RJS(\langle \mu_1,\ldots,\mu_r\rangle, \langle \mu_1+\cdots + \mu_r\rangle, r)$.
\end{corollary}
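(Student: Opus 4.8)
The plan is to recognize $\js(\langle t\mu_1,\ldots,t\mu_r\rangle, \langle t(\mu_1+\cdots+\mu_r)\rangle, r)$ as an Ehrhart counting function and then invoke Ehrhart's theorem. First I would use Theorem~\ref{thm:unresA} to rewrite this quantity as $K_{A_r}(t\mu)$, where $t\mu = \langle t\mu_1,\ldots,t\mu_{r+1}\rangle$ is the $t$-fold dilation of the weight $\mu$. By the discussion following Definition~\ref{def:flowpoly}, $K_{A_r}(t\mu)$ equals the number of lattice points of the flow polytope $\mathcal{F}_{A_r}(t\mu)$.

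The next step is to establish the dilation identity $\mathcal{F}_{A_r}(t\mu) = t\,\mathcal{F}_{A_r}(\mu)$. This follows directly from the defining conditions: a function $f$ satisfies $\sum_{\beta} f(\beta)\beta = t\mu$ with $f \geq 0$ if and only if $f/t$ satisfies $\sum_{\beta} (f/t)(\beta)\beta = \mu$ with $f/t \geq 0$, so $f \mapsto f/t$ is the homothety identifying the two polytopes. Hence $K_{A_r}(t\mu)$ counts the lattice points in the $t$-th dilate of $\mathcal{F}_{A_r}(\mu)$.

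To conclude I would appeal to Ehrhart's theorem, which requires that $\mathcal{F}_{A_r}(\mu)$ be a lattice polytope. This holds because the flow polytope is cut out by the incidence relations of a network whose constraint matrix is totally unimodular; consequently every vertex is integral. Ehrhart's theorem then gives that $K_{A_r}(t\mu)$ agrees, for positive integers $t$, with a polynomial in $t$ of degree $\dim \mathcal{F}_{A_r}(\mu)$ whose leading coefficient is the relative volume of $\mathcal{F}_{A_r}(\mu)$. Finally, Proposition~\ref{prop:rjs is a polytope} shows that the juggling polytope $\RJS(\langle \mu_1,\ldots,\mu_r\rangle, \langle\mu_1+\cdots+\mu_r\rangle,r)$ is integrally equivalent to $\mathcal{F}_{A_r}(\mu)$; since integral equivalence preserves both dimension and relative (lattice-normalized) volume, the degree and leading term transfer to the juggling polytope, giving the claim.

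The main obstacle I expect is the justification that $\mathcal{F}_{A_r}(\mu)$ is a genuine lattice polytope---that is, that its vertices lie on the integer lattice---so that the counting function is an honest polynomial rather than merely a quasi-polynomial and its leading coefficient is an honest (relative) volume. This is exactly where the total unimodularity of the flow constraints is essential. A secondary subtlety is that the polytope is generally not full-dimensional, so \emph{volume} must be read as the relative volume with respect to the lattice in its affine span, and one must check that the integral equivalence of Proposition~\ref{prop:rjs is a polytope} carries this normalized volume across unchanged.
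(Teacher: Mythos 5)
Your proposal is correct and follows essentially the same route as the paper: identify the counting function with $K_{A_r}(t\mu)$, recognize it as the Ehrhart polynomial of the lattice flow polytope $\mathcal{F}_{A_r}(\mu)$, and transfer degree and leading term to the juggling polytope via the integral equivalence of Proposition~\ref{prop:rjs is a polytope}. The only differences are cosmetic: you justify integrality of the vertices by total unimodularity of the network constraint matrix where the paper cites \cite[Lemma 2.1]{Hille} and \cite[Prop.~2.5]{MM2}, and you spell out the dilation identity $\mathcal{F}_{A_r}(t\mu)=t\,\mathcal{F}_{A_r}(\mu)$ and the relative-volume normalization, which the paper leaves implicit.
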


\begin{proof}
If $\mu_i \in \mathbb{N}$ for $i=1,\ldots,r$, then $\mu=\sum_{i=1}^{r}\mu_i\e_i-(\sum_{i=1}^{r}\mu_i)\e_{r+1}$ is a weight of the Lie algebra of type $A_r$. Moreover, the polytope \[\mathcal{F}_{A_r}(\mu)=\RJS(\langle \mu_1,\ldots,\mu_r\rangle, \langle \mu_1+\cdots + \mu_r\rangle,r)\] has integral vertices \cite[Lemma 2.1]{Hille}, \cite[Prop. 2.5]{MM2} and the function \[K_{A_r}(t \mu)=\js(\langle t\mu_1,\ldots,t\mu_r\rangle, \langle t(\mu_1+\cdots + \mu_r)\rangle,r)\] is the Ehrhart polynomial of this polytope. The degree and leading term of this polynomial are given by the dimension and volume of the polytope, respectively \cite[Chapter 3]{CCD}. 
\end{proof}

Moreover, from the theory of flow polytopes there are formulas for $K_{A_r}(\mu)$ for $\mu_i \in \mathbb{Z}_{\geq 0}$ as a weighted sum of values of Kostant's partition function at weights independent of $\mu$ (see Section~\ref{sec:sb}). These formulas are due to Lidskii \cite{lidskii} and generalized to $K_{\Lambda}(\mu)$ by Baldoni--Vergne \cite{BB} and proved via polytope subdivisions in \cites{MM2,KMS}.  

We let $\multiset{n}{k} := \binom{n+k-1}{k}$.  For weak compositions ${\bf j}=(j_1,\ldots,j_n)$ and ${\bf i}=(i_1,\ldots,i_n)$ we say that ${\bf j}$ {\em dominates} ${\bf i}$ if $\sum_{i=1}^k j_k \geq \sum_{i=1}^k i_k$ for $k=1,\ldots,n$. Also, given a weight $\mu=\sum_{i=1}^{r+1}\mu_i\e_i$ of a Lie algebra of type $A_r$, in what follows we let $K(\mu_1,\mu_2,\ldots,\mu_{r+1})=K(\mu)$.

\begin{theorem}[{\cite[Propositions~39 and 34]{BB}}]
For a weight $\mu=\sum_{i=1}^{r+1}\mu_i\e_i$ with $\mu_i \in \mathbb{Z}_{\geq 0}$ for all $1\leq i\leq r$, we have that 
\begin{align*}
&K_{A_r}(\mu)\\
&= 
\sum_{{\bfj}} \binom{\mu_1+r-1}{j_1}\binom{\mu_2 +r-2}{j_2} \cdots \binom{\mu_{r-1}}{j_{r-1}} K_{A_{r-2}}(j_1-r+1,j_2-r+2,\ldots,j_{r-2}-2,  j_{r-1}-1), \\ 
&=\sum_{{\bfj}} \multiset{\mu_1+1}{j_1}\multiset{\mu_2}{j_2} \cdots \multiset{\mu_{r-1}+3-r}{j_{r-1}}  K_{A_{r-2}}( j_1-r+1,j_2-r+2,\ldots,j_{r-2}-2,  j_{r-1}-1),
\end{align*}
where both sums are over weak compositions  $\bfj=(j_1,\ldots,j_{r-1})$ of $\binom{r}{2}$ that dominate the composition $(r-1,r-2,\ldots,1)$..
\end{theorem}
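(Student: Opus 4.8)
The plan is to recognize this statement as a pair of Lidskii-type formulas for the flow polytope $\mathcal{F}_{A_r}(\mu)$ and to prove them by extracting the coefficient of a monomial in the generating function for $K_{A_r}$ via iterated residues. By the discussion following Definition~\ref{def:flowpoly}, the lattice points of $\mathcal{F}_{A_r}(\mu)$ are counted by $K_{A_r}(\mu)$, so it suffices to establish the two identities for the Kostant partition function directly. I would begin from the standard generating function $\sum_{\mu} K_{A_r}(\mu)\,x^{\mu} = \prod_{1\le i<j\le r+1}(1-x_i x_j^{-1})^{-1}$, where $x^{\mu}=x_1^{\mu_1}\cdots x_{r+1}^{\mu_{r+1}}$ and each factor records the positive root $\e_i-\e_j$, and read off $K_{A_r}(\mu)$ as the iterated residue that extracts $[x^{\mu}]$ in the variables $x_1,x_2,\ldots$ taken in this order.

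First I would peel off the variable $x_1$. Expanding each factor $(1-x_1 x_j^{-1})^{-1}$ as a geometric series distributes the flow leaving vertex $1$ among the $r$ downstream vertices; summing these series over the admissible exponents collapses to a single binomial factor, namely $\binom{\mu_1+r-1}{j_1}$ with $j_1$ the total flow out of vertex $1$, and rewrites the remaining product as the generating function of a rank-$(r-1)$ problem with a netflow shifted by the flow that vertex $1$ sent downstream. Iterating this peeling over $x_2,\ldots,x_{r-1}$ produces the successive binomial factors appearing in the statement and, after $r-1$ steps, leaves exactly the generating function for $K_{A_{r-2}}$ evaluated at the shifted weight $(j_1-r+1,\ldots,j_{r-1}-1)$. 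The requirement that each intermediate netflow remain in the nonnegative cone, so that the geometric expansions actually contribute, is precisely the condition that $\bfj$ dominate $(r-1,r-2,\ldots,1)$; and matching total degrees with the dimension $\binom{r}{2}$ of the polytope forces $\bfj$ to be a weak composition of $\binom{r}{2}$.

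The second identity, involving the multiset coefficients $\multiset{\mu_1+1}{j_1}\cdots$, comes from the opposite choice of expansion: expanding each factor in $x_j x_i^{-1}$ rather than in $x_i x_j^{-1}$. This is the usual ``positive versus negative'' dichotomy for Lidskii's formulas, and the elementary identity $\multiset{n}{k}=(-1)^k\binom{-n}{k}$ converts one family of residue contributions into the other, after which the same iterated bookkeeping yields the multiset form. A more geometric alternative, which I would keep in reserve, is to construct a Postnikov--Stanley / Danilov--Karzanov--Koshevoy subdivision of $\mathcal{F}_{A_r}(\mu)$ whose cells are products of a simplex (accounting for the binomial, respectively multiset, factors) with a flow polytope of type $A_{r-2}$ (accounting for the $K_{A_{r-2}}$ factor), and then sum the lattice-point counts of the cells indexed by $\bfj$.

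I expect the main obstacle to be the precise bookkeeping in the iterated residue: verifying that after peeling off $x_1,\ldots,x_{r-1}$ the residual generating function is \emph{exactly} that of $K_{A_{r-2}}$ at the stated shifted weight, and that the binomial (or multiset) factors emerge with exactly the arguments claimed, requires tracking how the shift accumulates at each step and checking that the feasibility region for the intermediate exponents coincides with the dominance condition together with $|\bfj|=\binom{r}{2}$. Reconciling the two expansions so that both are indexed by the same weak compositions dominating $(r-1,\ldots,1)$ is the subtlest point, and I would first confirm it for $r=2,3$ before committing to the general inductive argument.
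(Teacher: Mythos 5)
First, a point of comparison: the paper itself does not prove this statement. It is quoted from Baldoni--Vergne \cite{BB}*{Propositions 39 and 34}, specialized to the complete graph $K_{r+1}$, and the surrounding text points to \cite{lidskii} for the original type $A$ result and to \cites{MM2,KMS} for proofs via polytope subdivisions. So your proposal must be measured against those arguments, and your ``reserve'' plan (a Postnikov--Stanley/Danilov--Karzanov--Koshevoy subdivision) is in fact the route taken in the references the paper cites --- but you only name it, you do not execute it.

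Your primary argument has a genuine gap at its very first step. In the expansion of $\prod_{1\le i<j\le r+1}(1-x_ix_j^{-1})^{-1}$, only the factors with $i=1$ involve $x_1$, and each contributes a \emph{nonnegative} power of $x_1$; hence extracting $[x_1^{\mu_1}]$ forces the total exponent $\sum_{j>1}f_{1j}$ to equal $\mu_1$ exactly. In flow language, vertex $1$ is a source of $K_{r+1}$, so its outflow is pinned to $\mu_1$ by conservation: there is no free summation parameter $j_1$, and the number of ways to distribute the outflow is the single coefficient $\multiset{r}{\mu_1}=\binom{\mu_1+r-1}{r-1}$, not a family of binomials $\binom{\mu_1+r-1}{j_1}$ indexed by $j_1$. (In the true formula $j_1$ ranges up to $\binom{r}{2}$ independently of $\mu_1$, so it cannot be a flow quantity at vertex $1$.) What your peeling actually yields is the recursion
\[
K_{A_r}(\mu_1,\ldots,\mu_{r+1})=\sum_{a_2+\cdots+a_{r+1}=\mu_1}K_{A_{r-1}}(\mu_2+a_2,\ldots,\mu_{r+1}+a_{r+1}),
\]
whose inner arguments depend on $\mu$ and on the distribution $(a_j)$; iterating it never produces the defining feature of Lidskii's formula, namely that \emph{all} $\mu$-dependence sits in the binomial prefactors while $K_{A_{r-2}}$ is evaluated at the $\mu$-independent points $(j_1-r+1,\ldots,j_{r-1}-1)$. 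That separation of variables is the entire content of the theorem (it is what makes the polynomiality of Corollary~\ref{cor:polynomiality} manifest), and it does not fall out of geometric-series bookkeeping: it requires either Baldoni--Vergne's Jeffrey--Kirwan/iterated-residue argument with its careful choice of expansion chambers, or the subdivision induction of \cites{MM2,KMS}, whose cells are not simply products of a simplex with a type $A_{r-2}$ flow polytope. Two smaller remarks: the dominance condition needs no derivation from feasibility, since $K_{A_{r-2}}(j_1-r+1,\ldots,j_{r-1}-1)$ vanishes unless all partial sums are nonnegative, i.e.\ unless $\bfj$ dominates $(r-1,\ldots,1)$, so non-dominating terms simply drop out; and your sanity check at $r=2$ is worth doing, as it reveals that the last binomial displayed in the paper, $\binom{\mu_{r-1}}{j_{r-1}}$, should read $\binom{\mu_{r-1}+1}{j_{r-1}}$ --- for $r=2$ the formula must give $K_{A_2}(\mu)=\binom{\mu_1+1}{1}=\mu_1+1$, in agreement with the multiset form $\multiset{\mu_1+1}{1}$.
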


We translate this result to juggling sequences using Theorem~\ref{thm:unresA}. 

\begin{corollary} \label{cor:lidskii-juggling}
For nonnegative integers $\mu_1,\ldots,\mu_r$ we have  
\begin{multline*}
    \js(\langle \mu_1,\ldots,\mu_r\rangle, \langle \mu_1+\cdots+\mu_r\rangle,r) \\
    =\sum_{{\bf j}} \binom{\mu_1+r-1}{j_1}\binom{\mu_2 +r-2}{j_2} \cdots \binom{\mu_{r-1}}{j_{r-1}} \js( \langle j_1-r+1,j_2-r+2,\ldots,j_{r-2}-2 \rangle, \langle 1-j_{r-1} \rangle, r-2),\\
= \sum_{{\bf j}} \multiset{\mu_1+1}{j_1}\multiset{\mu_2}{j_2} \cdots \multiset{\mu_{r-1}+3-r}{j_{r-1}}  \js( \langle j_1-r+1,j_2-r+2,\ldots,j_{r-2}-2 \rangle, \langle 1-j_{r-1} \rangle, r-2),
\end{multline*}
where both sums are over weak compositions  $\bfj=(j_1,\ldots,j_{r-1})$ of $\binom{r}{2}$ that dominate the composition $(r-1,r-2,\ldots,1)$.
\end{corollary}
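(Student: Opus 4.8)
The plan is to apply Theorem~\ref{thm:unresA} term-by-term to each of the two Baldoni--Vergne expansions for $K_{A_r}(\mu)$, leaving the coefficients untouched. The left-hand side is immediate: since $\mu=\langle \mu_1,\ldots,\mu_{r+1}\rangle$ with $\mu_i\in\mathbb{Z}_{\geq 0}$ for $1\leq i\leq r$, Theorem~\ref{thm:unresA} gives $K_{A_r}(\mu)=\js(\langle \mu_1,\ldots,\mu_r\rangle,\langle \mu_1+\cdots+\mu_r\rangle,r)$, which is exactly the left-hand side of the corollary. It therefore remains to rewrite each inner factor $K_{A_{r-2}}(\cdot)$ appearing on the right.

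For the right-hand sides, I would set $\nu=\langle j_1-r+1,\, j_2-r+2,\,\ldots,\, j_{r-2}-2,\, j_{r-1}-1\rangle$, a weight of type $A_{r-2}$ with standard-basis coordinates $\nu_i=j_i-(r-i)$ for $1\leq i\leq r-1$ (note it has $r-1=(r-2)+1$ coordinates, as required in rank $r-2$). Applying Theorem~\ref{thm:unresA} in rank $r-2$ yields
\[
K_{A_{r-2}}(\nu)=\js\big(\langle \nu_1,\ldots,\nu_{r-2}\rangle,\ \langle \nu_1+\cdots+\nu_{r-2}\rangle,\ r-2\big).
\]
The first $r-2$ entries of the initial state are precisely $\langle j_1-r+1,\ldots,j_{r-2}-2\rangle$, matching the corollary, so the only thing to check is that the terminal state $\langle \nu_1+\cdots+\nu_{r-2}\rangle$ equals $\langle 1-j_{r-1}\rangle$.

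This terminal-state identification is the one step with genuine content, and it is the step I expect to be the (minor) obstacle. The key observation is that every positive root of type $A$ has standard-basis coordinates summing to zero, so any weight with nonzero Kostant value must itself have coordinate sum zero; thus whenever $K_{A_{r-2}}(\nu)\neq 0$ we have $\sum_{i=1}^{r-1}\nu_i=0$, whence $\nu_1+\cdots+\nu_{r-2}=-\nu_{r-1}=-(j_{r-1}-1)=1-j_{r-1}$. When instead $\sum_{i=1}^{r-1}\nu_i\neq 0$, both $K_{A_{r-2}}(\nu)$ and the corresponding $\js(\cdots,\langle 1-j_{r-1}\rangle,r-2)$ vanish---the latter because the initial and terminal configurations then carry different net numbers of balls, so no juggling sequence exists---and the two sides still agree. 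With this in hand, the binomial and multiset coefficients, which depend only on $\mu_1,\ldots,\mu_{r-1}$ and $\bfj$, are carried along unchanged, and the sum remains over the same weak compositions $\bfj$ of $\binom{r}{2}$ dominating $(r-1,\ldots,1)$, completing the translation of both equalities.
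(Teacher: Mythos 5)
Your proposal is correct and takes essentially the same approach as the paper: the paper's entire proof is the remark ``We translate this result to juggling sequences using Theorem~\ref{thm:unresA},'' i.e., exactly the term-by-term substitution you carry out, with your terminal-state identification being the detail the paper leaves implicit. One small simplification: your case $\sum_{i=1}^{r-1}\nu_i\neq 0$ is vacuous, since $\bfj$ is a weak composition of $\binom{r}{2}=\sum_{i=1}^{r-1}(r-i)$, so $\sum_{i=1}^{r-1}\nu_i=\binom{r}{2}-\binom{r}{2}=0$ automatically for every term in the sum.
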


In particular, this shows that $\js(\langle \mu_1,\ldots,\mu_r\rangle, \langle \mu_1+\cdots + \mu_r\rangle,r)$ is a polynomial in $\mu_1,\ldots,\mu_r$.

\begin{remark}
Note that if a composition $(j_1,\ldots,j_n)$ dominates a composition $(i_1,\ldots,i_n)$ then $(j_n,\ldots,j_1)$ is dominated by $(i_n,\ldots,i_1)$. Thus in the equation above, if $(j_1,\ldots,j_{n-1})$ dominates $(r-1,r-2,\ldots,1)$ then $j_{n-1} \leq 1$.
\end{remark}

We end this section by discussing two examples of juggling polytopes equivalent to well-known flow polytopes with interesting face structures and volumes.  For more results on volumes and lattice points of flow polytopes, see \cite{MP,MM2,caracol,LMStD,MY,JK}.

 \begin{figure}[h]
    \centering
    \hspace{-0.2in}
    \resizebox{.3\textwidth}{!}{
    \subcaptionbox{$\RJS(\langle 1 \rangle, \langle 1 \rangle,3) = \mathcal{F}_{A_3}(\hroot_{A_3})$}%
  [.35\textwidth]{\includegraphics[scale=0.8]{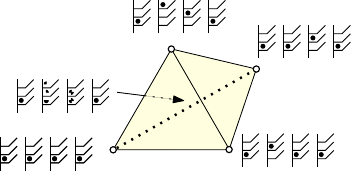}
  } }
  \resizebox{.4\textwidth}{!}{
  \subcaptionbox{$\RJS(\langle 1,1,1 \rangle, \langle 3 \rangle,3) = \mathcal{F}_{A_3}(1,1,1,-3)$}%
  [.4\textwidth]{\includegraphics[scale=0.7]{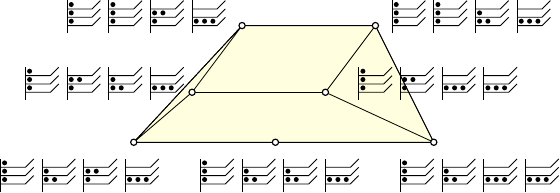}
  }  
  }
  \resizebox{.3\textwidth}{!}{
    \subcaptionbox{$\RJS(\langle 1,1 \rangle, \langle 1,1 \rangle,3)$}%
  [.35\textwidth]{\includegraphics[scale=0.9]{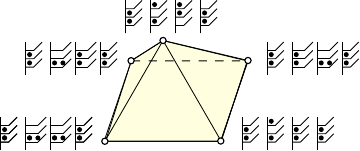}
  } }
   \caption{Examples of juggling polytopes.}
    \label{fig:juggling polytope}
\end{figure}
 
\begin{example} \label{ex:CRY}
The polytope $\RJS(\langle 1 \rangle, \langle 1 \rangle, r) = \mathcal{F}_{A_r}(\tilde{\alpha}_{A_r})$ is known as the Chan--Robbins--Yuen (CRY) polytope \cite{CRY}; see Figure~\ref{fig:juggling polytope}(a). This polytope has $2^{r-1}$ vertices and dimension $\binom{r}{2}$. In unpublished work of Postnikov and Stanley, they applied a result of Baldoni--Vergne \cite{BB} to show that the normalized volume of the polytope equals a value of Kostant's partition function. 
\begin{equation} \label{eq:fundamental-relation}
    {\textstyle \binom{r}{2}!}\cdot \vol\, \mathcal{F}_{A_r}(\tilde{\alpha}_{A_r}) = K_{A_r}({\textstyle \sum_{\beta \in \Phi^+_{A_r}}} \beta).
\end{equation}

We use Proposition~\ref{prop:rjs is a polytope} and Theorem~\ref{thm:prop:cor} to write Equation~\eqref{eq:fundamental-relation} in terms of juggling sequences as
\begin{equation*}
{\textstyle \binom{r}{2}!}\cdot \vol\left(\RJS(\langle 1 \rangle, \langle 1 \rangle, r)\right) = \js(\langle 1,2,\ldots,r-2\rangle, \langle {\textstyle \binom{r-1}{2}}\rangle,r-2).
\end{equation*}
Moreover, the value of Kostant's partition function in the right hand side of Equation~\eqref{eq:fundamental-relation} was computed by Zeilberger \cite{Z} via a variant of a constant term identity of Morris to give a product of Catalan numbers $\Cat_n := \frac{1}{n+1}\binom{2n}{n}$,
\begin{equation} \label{eq:ZprodCat}
    \js(\langle 1,2,\ldots,r-2\rangle, \langle {\textstyle \binom{r-1}{2}} 
\rangle,r-2) =\Cat_1\cdots \Cat_{r-2}.
\end{equation}
\end{example}

\begin{example} \label{ex:Tesler}
The polytope $\RJS(\langle 1,\ldots,1 \rangle, \langle r \rangle, r) = \mathcal{F}_{A_r}(1,\ldots,1,-r)$  is known as the {\em Tesler polytope} \cite{TP}; see Figure~\ref{fig:juggling polytope}(b). This polytope has $r!$ vertices and dimension $\binom{r}{2}$. Using a result of Baldoni--Vergne and a constant term identity, the authors in \cite{TP} showed that the  normalized volume of this polytope equals
\begin{equation}
{\textstyle \binom{r}{2}!}\cdot \vol \left(\RJS(\langle 1,\ldots,1 \rangle, \langle r \rangle, r)\right) = \big\lvert\SYT(r-1,r-2,\ldots,1)\big\rvert \cdot\Cat_1\cdots \Cat_{r-1},
\end{equation}
where $|\SYT(\lambda)|$ is the number of standard Young tableaux of shape $\lambda$. 
\end{example}

\begin{remark}\label{rem:formulas}
The number of lattice points of the CRY polytope is given by $K_{A_r}(\tilde{\alpha}_{A_r})$. For other Lie types, variants of the CRY polytope whose lattice points are given by partitions of the highest root  (see Section~\ref{sec:HIOsubsec}) were defined in \cite{MM1}. These variants also have remarkable volumes proved by Corteel, Kim, and M\'esz\'aros \cite{CKM1,CKM2}: 
\begin{itemize}
    \item $CRY_{C_{r+1}}:=\mathcal{F}_{C_{r+1}}(\tilde{\alpha}_{C_{r+1}})$ has $3^r$ vertices, and volume $2^{r(r-1)}\Cat_1\cdots \Cat_{r-1}$.
    \item $CRY_{D_{r+1}}:=\mathcal{F}_{D_{r+1}}(\tilde{\alpha}_{D_{r+1}})$  has  $3^r-2^r$ vertices, and volume $2^{r(r-2)} \Cat_1\cdots \Cat_{r-1}$.
\end{itemize}
These volume formulas are proved case by case using in part constant term identities. These results suggest that there is a uniform formula or a reduction to the type $A$ case. 
\end{remark}

\subsubsection{The juggling poset}

There is an interesting graded poset defined by Armstrong and further studied by O'Neill \cite{JO} on the configurations counted by Kostant's partition function. In this context, these configurations are referred to as Tesler matrices \cite{AGHRS,H,TP} because of a connection to diagonal harmonics and the poset in \cite{JO} is called the Tesler poset. Since by Theorem~\ref{thm:prop:cor} these configurations are in bijection with juggling sequences, we can define the analogous graded poset for juggling sequences. For background on posets see \cite[Chapter~3]{EC1}.

\begin{definition}
Consider juggling states $\bfa=\langle a_1,\ldots,a_s\rangle$ and $\bfb=\langle b_1,\ldots,b_t\rangle$, satisfying $a_1+\cdots+a_s = b_1+\cdots+b_t$, and positive integers $n$ and $m$. Let $\PJS(\bfa,\bfb,n,m)$ be the poset with elements $\JS(\bfa, \bfb, n,m)$  and cover relations $S \lessdot S'$ if $S'$ is obtained from $S$ by replacing both a throw at time $i$ to height $j$ and a throw at time $i+j$ to height $k$ in $S$ by one throw at time $i$ to height $j+k$. As before, if there is no hand capacity constraint we omit $m$. See Figure~\ref{fig:juggling poset} for examples. 
\end{definition}

\begin{remark}
After applying the bijection $\Gamma$ from Theorem~\ref{thm:jugtopart}, the poset $\PJS(\bfa,\bfb,n,m)$ is equivalent to a poset with elements $Q_{\Lambda}(\delta)$ and cover relations $p \lessdot p'$ if $p'$ is obtained from $p$ by replacing one instance of $\e_i-\e_{i+j}$ and $ \e_{i+j}-\e_{i+j+k}$ in $p$ by $\e_i-\e_{i+j+k} \in \Lambda$.
\end{remark}

In the special case of the correspondence in  Theorem~\ref{thm:prop:cor}, the juggling poset $\PJS(\langle \mu_1,\ldots, \mu_n\rangle, \langle \mu_1 + \cdots + \mu_n \rangle, n)$ is precisely the dual to the Tesler poset in \cite{JO} with elements $P_{A_r}(\mu)$. This poset is graded and has unique minimal and maximal elements \cite[Section 3.2]{JO}. It is not difficult to see that the juggling poset $\PJS(\langle 1 \rangle, \langle 1 \rangle, n,1)$ is isomorphic to the Boolean lattice of rank $n-1$ (see Figure~\ref{fig:juggling poset}(a)). Interestingly, the  juggling posets with binary initial state $\bfa$ and terminal state $\langle a_1+\cdots + a_n\rangle$ have surprisingly simple characteristic polynomials $\chi(\cdot)$. 

\begin{theorem}[\cite{JO}, Theorem 3]\label{th:characteristic poly}
Let $\bfa \in [0,1]^{n}$ then 
\[
\chi(\PJS(\bfa, \langle a_1 + \cdots + a_n \rangle, n),q) = (q-1)^{\sum_{i=1}^n (n-i)a_i}. 
\]
\end{theorem}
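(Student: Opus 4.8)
The plan is to transport the statement through the bijection $\Gamma$ and reduce it to O'Neill's computation on the Tesler poset. First I would check that the map $\Gamma$ of Theorem~\ref{thm:prop:cor} is not merely a bijection of sets but an anti-isomorphism of posets: writing $\mu=\sum_i a_i\e_i-(\sum_i a_i)\e_{n+1}$, the set $\JS(\bfa,\langle a_1+\cdots+a_n\rangle,n)$ is identified with $P_{A_n}(\mu)$, and the juggling cover relation---replacing throws $T_{i,j}$ and $T_{i+j,k}$ by the single throw $T_{i,j+k}$---corresponds under $\Gamma^{-1}$ to replacing the two roots $\e_i-\e_{i+j}$ and $\e_{i+j}-\e_{i+j+k}$ by their sum $\e_i-\e_{i+j+k}$. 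This is exactly the reverse of the Tesler-poset cover relation, so $\PJS(\bfa,\langle a_1+\cdots+a_n\rangle,n)$ is anti-isomorphic to the Tesler poset of weight $\mu$, as already noted in the text preceding the theorem.

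Next I would pin down the grading and the two bounds so that the exponent is accounted for. The poset is bounded: its minimum $\hat 0$ is the unique finest partition, using only the consecutive roots $\e_a-\e_{a+1}$, and its maximum $\hat 1$ is the coarsest partition $\{\e_i-\e_{n+1}:a_i=1\}$; each cover merges two parts into one, so the rank function is the number of parts (up to a global shift) and the total rank equals $|\hat 0|-|\hat 1|$. A direct count gives $|\hat 0|=\sum_i (n-i+1)a_i$ and $|\hat 1|=\sum_i a_i$, whence the rank of the whole poset is $\sum_i (n-i)a_i=:N$, matching the exponent in the claimed formula. With the poset identified and its rank computed, the characteristic polynomial is then read off from O'Neill's Theorem~3 in \cite{JO}; the one convention I would be careful to verify is that O'Neill's rank and our dualized rank agree, since the characteristic polynomial is not invariant under passing to the dual poset in general.

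The main obstacle is the genuine content of the statement---the factorization of the characteristic polynomial into $(q-1)^N$---which is precisely O'Neill's M\"obius-function computation and is not formal. I would emphasize that a self-contained proof cannot take the tempting shortcut of writing the poset as a product of Boolean lattices, one per ball: although the number of \emph{labeled} juggling sequences factors as a product by Proposition~\ref{prop:labeledmagic}, the \emph{unlabeled} poset does not, and its rank sizes already disagree with those of the Boolean lattice $B_N$ (for instance, for $\bfa=\langle 1,1,0\rangle$ and $n=3$ one finds $7\neq 2^3$ elements, so the interval structure is not Boolean). Consequently the identity $\sum_{\rho(x)=k}\mu(\hat 0,x)=(-1)^k\binom{N}{k}$ must be proved by genuinely controlling the M\"obius function---for example through an EL-labeling of the juggling poset or a deletion/merge recursion on the multiset of roots---and establishing that these alternating sums collapse to the binomial coefficients is where the real work lies.
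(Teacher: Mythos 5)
Your proposal takes the same route as the paper: the paper offers no independent proof of Theorem~\ref{th:characteristic poly}, but identifies $\PJS(\bfa,\langle a_1+\cdots+a_n\rangle,n)$ with the dual of the Tesler poset via $\Gamma$ (Theorem~\ref{thm:prop:cor}) and imports \cite[Theorem~3]{JO}. Your anti-isomorphism check, the rank computation $\sum_{i=1}^n(n-i)a_i$, and your warning that the product-of-Boolean-lattices shortcut fails are all correct, and they make precise the translation the paper leaves implicit.

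However, the convention check you flag but leave open is not a formality---it is the crux, and carrying it out changes the conclusion. With the paper's cover relation ($S\lessdot S'$ when $S'$ \emph{merges} two throws of $S$), the finest partition is $\hat 0$ and the coarsest is $\hat 1$. In any bounded graded poset of rank $N$ the coefficient of $q^{N-1}$ in the characteristic polynomial is minus the number of atoms, and $\PJS$ so oriented has too few atoms: for $\bfa=\langle 1,\ldots,1\rangle$ the finest partition admits only $n-1$ merges (adjacent pairs of simple roots), while $(q-1)^N$ with $N=\binom{n}{2}$ would require $N$ atoms. Concretely, for $\bfa=\langle 1,1,1\rangle$ and $n=3$ the poset has rank sizes $1,2,3,1$ from bottom to top, and a direct M\"obius computation gives
\[
\chi\bigl(\PJS(\langle 1,1,1\rangle,\langle 3\rangle,3),q\bigr)=q^3-2q^2+2q-1\neq (q-1)^3,
\]
whereas the dual poset (coarsest partition at the bottom, covers given by splitting a root $\e_i-\e_k$ into $\e_i-\e_j,\ \e_j-\e_k$) does have characteristic polynomial $(q-1)^3$; its atoms are exactly the $\sum_i(n-i)a_i$ splittings of the coarsest partition. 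Since only one of the two dual orientations can satisfy O'Neill's formula under the standard definition of $\chi$, the formula as printed attaches to the orientation opposite to the paper's stated cover relation. So your insistence on verifying that ``O'Neill's rank and our dualized rank agree'' is vindicated: completing that step is necessary to legitimize the citation of \cite[Theorem~3]{JO}, and it reveals that the statement should be read with the cover relation reversed (equivalently, for the dual of $\PJS$ as defined). On this point your proposal is more careful than the paper itself.
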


In particular, $\chi(\PJS(\langle 1 \rangle, \langle 1 \rangle, n),q) = (q-1)^{n-1}$ for the Boolean lattice and $$\chi(\PJS(\langle 1^n \rangle, \langle n \rangle, n),q) = (q-1)^{\binom{n}{2}}$$ where $\langle 1^n \rangle = \langle 1,\ldots,1\rangle$ (see Figure~\ref{fig:juggling poset}(b) for an example of $\PJS(\langle 1,1,1\rangle, \langle 3 \rangle ,3)$). The latter result of O'Neill was originally conjectured by Armstrong. It would be of interest to further study Tesler posets and the more general juggling posets $\PJS(\bfa,\bfb,n,m)$ (see Figure~\ref{fig:juggling poset}(c)).
\begin{figure}[htbp]
     \subcaptionbox{$\PJS(\langle 1 \rangle, \langle 1 \rangle,3)$}%
  [.25\textwidth]{ \includegraphics[scale=0.5]{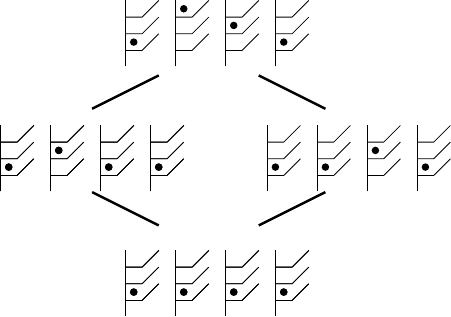}
  }
     \subcaptionbox{$\PJS(\langle 1,1,1 \rangle, \langle 3 \rangle,3)$}%
  [.4\textwidth]{ \includegraphics[scale=0.6]{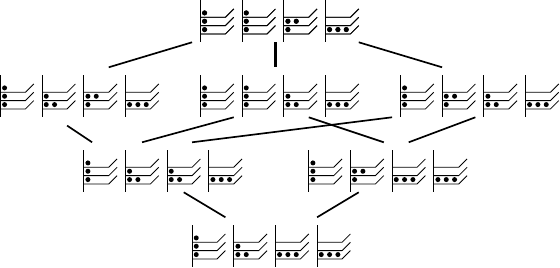}
  }
   \subcaptionbox{$\PJS(\langle 1,1 \rangle, \langle 1,1 \rangle,3)$}%
  [.3\textwidth]{ \includegraphics[scale=0.4]{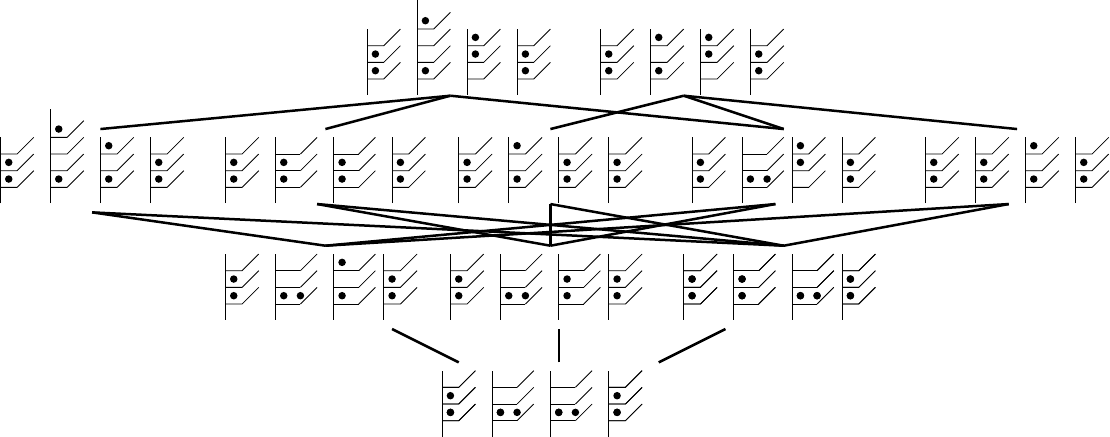}
  }
  \caption{Examples of juggling posets.}
    \label{fig:juggling poset}
\end{figure}

\subsection{Connections}
\subsubsection{Permanent and determinant formulas}
A result of Chung and Graham \cite[Theorem 2]{CG}, extended to multiplex juggling by Butler and Graham \cite{BG}, gives
a method for counting juggling sequences with hand capacity one
by taking permanents. This result can be extended to the case of a restricted set of throws and hand capacity $1$. Moreover, the resulting permanent equals a determinant of a very similar matrix, and thus can be computed efficiently.

\begin{definition}
Given a set of positive roots $\Lambda \subseteq \Phi^+_{A_r}$ with corresponding set of throws $\Gamma(\Lambda)$, let $M(r,\Lambda)=(m_{i,j})$ and $N(r,\Lambda)=(n_{i,j})$  be the $(r+1)\times (r+1)$ matrices with entries
\[
m_{i,j} = \begin{cases}
1 & \text{ if } j \geq i \text{ and } \e_i-\e_{j+1} \in \Lambda\\
1 & \text{ if } j = i-1\\
0 & \text{ otherwise,}
\end{cases}\qquad\mbox{and}\qquad 
n_{i,j} = \begin{cases}
\ph1 & \text{ if } j \geq i \text{ and } \e_i-\e_{j+1} \in \Lambda\\
-1 & \text{ if } j = i-1\\
\ph0 & \text{otherwise.}
\end{cases}.
\]
\end{definition}

\begin{theorem} \label{prop:juggling-perm-det}
Given a set $\Lambda \subseteq \Phi^+_{A_r}$ then
\[
K_{\Lambda}(\tilde{\alpha}_{A_r})=\js_{\Gamma(\Lambda)}(\langle 1 \rangle, \langle 1 \rangle, r)  =  \perm(M(r,\Lambda)) = \det(N(r,\Lambda)).
\]
\end{theorem}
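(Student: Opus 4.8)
The plan is to establish the two equalities in turn, treating the middle equality $K_\Lambda(\tilde\alpha_{A_r}) = \js_{\Gamma(\Lambda)}(\langle 1\rangle, \langle 1\rangle, r)$ as essentially already available, and then reducing each of the outer equalities to a direct verification about the matrices $M(r,\Lambda)$ and $N(r,\Lambda)$.

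\smallskip

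First I would dispose of the middle equality. Since $\tilde\alpha_{A_r} = \alpha_1 + \cdots + \alpha_r = \e_1 - \e_{r+1}$ has standard basis representation $\langle 1, 0, \ldots, 0, -1\rangle$, Theorem~\ref{thm:restrictparts} applies directly with $\mu = \tilde\alpha_{A_r}$ and the restriction $\Lambda$: it gives a bijection between $P_\Lambda(\tilde\alpha_{A_r})$ and $\JS_{\Gamma(\Lambda)}(\langle 1\rangle, \langle 1\rangle, r)$, so $K_\Lambda(\tilde\alpha_{A_r}) = \js_{\Gamma(\Lambda)}(\langle 1\rangle, \langle 1\rangle, r)$. (Here the hand capacity is irrelevant, as only one ball is juggled.) This is immediate.

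\smallskip

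For the permanent equality I would interpret $\perm(M(r,\Lambda)) = \sum_{\sigma} \prod_{i=1}^{r+1} m_{i,\sigma(i)}$ combinatorially. A nonzero term in the permanent corresponds to a permutation $\sigma$ of $\{1, \ldots, r+1\}$ for which every factor $m_{i,\sigma(i)}$ equals $1$; that is, for each $i$ either $\sigma(i) = i-1$ (a subdiagonal entry) or $\sigma(i) \geq i$ with $\e_i - \e_{\sigma(i)+1} \in \Lambda$. The key step is to show that such permutations are in bijection with juggling sequences in $\JS_{\Gamma(\Lambda)}(\langle 1\rangle, \langle 1\rangle, r)$, equivalently with partitions in $P_\Lambda(\tilde\alpha_{A_r})$. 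The idea is that the permutation $\sigma$ encodes, for each height index $i$, where the ball present at height $1$ at time $i$ is thrown: when $\sigma(i) = i-1$ no throw originates at time $i$ (the entry tracks gravity), and when $\sigma(i) = j \geq i$ a throw $T_{i, j-i+1}$ occurs, contributing the allowed root $\e_i - \e_{j+1} \in \Lambda$. One checks that the single-ball constraint forces exactly one "landing" and one "throw" per time step, which is precisely the condition that $\sigma$ be a permutation; the disjoint cycle decomposition then reconstructs the trajectory of the ball, and the collection of throw-roots is exactly a partition of $\e_1 - \e_{r+1}$. I expect this bijection between permutation terms and partitions to be the main content of the argument — the care needed is in verifying that the permutation condition (each row and column used exactly once) matches the juggling balance condition at each time step, and that the two boundary conditions (start and end with one ball at height $1$) are automatically encoded.

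\smallskip

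For the determinant equality, the matrices $M(r,\Lambda)$ and $N(r,\Lambda)$ differ only in the sign of the subdiagonal entries (which are $+1$ in $M$ and $-1$ in $N$; all other nonzero entries agree and are $+1$). I would argue that the sign changes exactly compensate for the signs introduced by $\det$ relative to $\perm$. Concretely, in $\det(N(r,\Lambda)) = \sum_\sigma \operatorname{sgn}(\sigma) \prod_i n_{i,\sigma(i)}$, a contributing $\sigma$ has some set of subdiagonal positions $\sigma(i) = i-1$; each such position carries a factor $-1$ in $N$. The hoped-for statement is that for every contributing permutation, $\operatorname{sgn}(\sigma) \cdot (-1)^{\#\{i : \sigma(i) = i-1\}} = +1$, so that $\det(N(r,\Lambda))$ counts the same terms with uniform sign $+1$ and hence equals $\perm(M(r,\Lambda))$. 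To see this I would analyze the cycle structure: a contributing $\sigma$ decomposes into cycles, and within each cycle the subdiagonal steps $i \mapsto i-1$ together with the "throw" steps $i \mapsto j \geq i$ must close up, so each cycle of length $\ell$ contributes a predictable number of subdiagonal steps whose sign contribution $(-1)^{(\text{subdiagonal count})}$ cancels the cycle's contribution $(-1)^{\ell-1}$ to $\operatorname{sgn}(\sigma)$. The main obstacle is this sign bookkeeping: I must show the parity cancellation holds cycle by cycle, which amounts to checking that in each cycle the number of subdiagonal descents has the correct parity relative to the cycle length. Once the per-cycle cancellation is verified, the product over cycles gives $\operatorname{sgn}(\sigma)\prod_i n_{i,\sigma(i)} = +1$ for every contributing $\sigma$, completing the proof.
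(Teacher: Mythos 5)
Your handling of the first equality coincides with the paper's: both invoke Theorem~\ref{thm:restrictparts} at $\mu=\tilde{\alpha}_{A_r}$. For the remaining two equalities your route is genuinely different. The paper proves them by citation---the permanent identity is attributed to \cite[Theorem 2]{CG} (the one-ball case, extended to a restricted root set $\Lambda$), and the determinant identity to an unpublished observation of Morales and Wang, verifiable ``by induction or using \cite[Example 2.2.4]{EC1}''---whereas you prove both from scratch: you decode a nonzero permanent term as a one-ball trajectory (subdiagonal entries are gravity steps, entries $\sigma(i)=j\geq i$ are throws contributing $\e_i-\e_{j+1}$), and you obtain the determinant identity from a per-cycle sign cancellation. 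Both of your key steps are correct and completable: a contributing permutation decomposes into one cycle per throw, namely $(a_t,\,a_{t+1}-1,\,a_{t+1}-2,\,\ldots,\,a_t+1)$ where $1=a_0<a_1<\cdots<a_k=r+1$ are the successive landing positions of the ball, so a cycle of length $\ell$ carries exactly one throw step and $\ell-1$ subdiagonal steps, giving $\operatorname{sgn}(\sigma)\cdot(-1)^{\#\{i\,:\,\sigma(i)=i-1\}}=\prod_{\mathrm{cycles}}(-1)^{\ell-1}(-1)^{\ell-1}=+1$, exactly as you predict. Your version buys a self-contained argument in which both outer equalities fall out of a single cycle decomposition; the paper's buys brevity by outsourcing the combinatorics.

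One concrete warning, which your approach (unlike the paper's) is forced to confront: with the matrices as literally defined in the paper, your bijection cannot close up, because the last column of the $(r+1)\times(r+1)$ matrix $M(r,\Lambda)$ is identically zero---$m_{i,r+1}=1$ would require either $i=r+2$ or $\e_i-\e_{r+2}\in\Lambda$, both impossible---so there are no contributing permutations at all. This is an off-by-one in the paper's definition rather than a flaw in your idea: your correspondence works verbatim on the $r\times r$ matrix indexed by $1,\ldots,r$, where the final landing root $\e_i-\e_{r+1}$ occupies entry $(i,r)$ and the assignment $\sigma(a_t)=a_{t+1}-1$, $\sigma(i)=i-1$ otherwise, is a genuine permutation of $\{1,\ldots,r\}$. (For the paper's own example one has $K_\Lambda(\tilde{\alpha}_{A_3})=3$, which equals the permanent of the $3\times 3$ truncation; the displayed $4\times 4$ matrix has permanent $5$, which is the count for the analogous $\Lambda$ in $A_4$.) So when you carry out the verification you describe---that the boundary conditions are ``automatically encoded''---expect to have to correct the matrix size; with that correction your proof is sound.
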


\begin{proof}
The first equality follows by Theorem~\ref{thm:restrictparts} for the weight $\mu = \tilde{\alpha}_{A_r}$. The second equality is essentially the result in \cite[Theorem 2]{CG} (the case $b=1$) allowing for a restricted set of roots $\Lambda$. The last equality was observed by Morales and Wang (unpublished) and can be verified by induction or using \cite[Example 2.2.4]{EC1}.
\end{proof}

\begin{example}
Let $\Lambda$ be the set of positive roots in $\Phi^+_{A_3}$ of length at most two. More precisely,
\[
\Lambda = \{\e_1-\e_2, \e_1-\e_3,\e_2-\e_3,\e_2-\e_4, \e_3-\e_4\} \textup{ and }  \Gamma(\Lambda) = \{T_{1,1}, T_{1,2},T_{2,2},T_{2,3},T_{3,3}\}.
\]
By Theorem~\ref{prop:juggling-perm-det}
\[
K_{\Lambda}(\tilde{\alpha}_{A_3}) = \js_{\Gamma(\Lambda)}(\langle 1 \rangle, \langle 1 \rangle, 3) = \perm \begin{bmatrix}
1 & 1 &0 &0   \\
1 & 1 & 1 &0  \\
0  & 1 & 1 & 1  \\
0  &0 & 1 & 1  
\end{bmatrix} = 
\det \begin{bmatrix}
\ph1 & \ph1 &\ph0 &\ph0   \\
-1 &\ph 1 &\ph 1 &\ph0  \\
\ph0  & -1 &\ph 1 &\ph 1 \\
\ph0  &\ph0 & -1 &\ph 1 
\end{bmatrix} = 5.
\]
\end{example}

In light of Theorem~\ref{prop:juggling-perm-det} we ask:
\begin{question}
For what other hand capacities can one develop a permanent or determinant formula to count the number of juggling sequences? 
\end{question}
Given our results in this paper, an answer to this question would yield permanent or determinant formulas for the value of Kostant's partition function and its restriction.

\subsubsection{Classes of generating functions} \label{sec:genfunctions}
The examples of generating functions for juggling sequences in Table~\ref{tab:BG} and for $a_r:=\js(\langle n \rangle, \langle n \rangle,r)$ (see  Remark~\ref{rem: cry recurrence}) are all rational. However, it is not true that the generating function for the number of juggling sequences of a fixed state and length $n$ are rational, algebraic, or even $D$-finite. See \cite[Ch. 6]{EC2} for background on generating functions.

\begin{proposition}Let $\mu = \sum_i \binom{i+1}{2} \alpha_i$, 
the generating function for the sequence $a_r:=\wp_{A_r}(\mu) = \js(\langle 1, 2,\ldots,r \rangle, \langle {\textstyle \binom{r+1}{2}} \rangle,  r)$
is not $D$-finite, in other words, the sequence is not $P$-recursive.
\end{proposition}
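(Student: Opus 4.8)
The plan is to show that $(a_r)$ grows far too quickly to be $P$-recursive, and then invoke the standard equivalence that $\sum_r a_r x^r$ is $D$-finite if and only if $(a_r)$ is $P$-recursive, i.e.\ satisfies a linear recurrence $\sum_{k=0}^{d} p_k(r)\,a_{r+k}=0$ with polynomial coefficients $p_0,\ldots,p_d$ and $p_d\not\equiv 0$ \cite[Ch.~6]{EC2}. The three steps are: (i) identify $a_r$ as a product of consecutive Catalan numbers and read off its growth; (ii) prove that every $P$-recursive sequence satisfies $\log|a_r|=O(r\log r)$; and (iii) observe that the Catalan product violates this bound. Once these are in place the conclusion is immediate.

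For step (i), I would use Theorem~\ref{thm:unresA} to rewrite $a_r=\wp_{A_r}(\mu)$ as a juggling count of exactly the shape occurring in \eqref{eq:ZprodCat}, which gives the closed form
\[
a_r=\Cat_1\,\Cat_2\cdots \Cat_r.
\]
From $\Cat_i \sim 4^i/(\sqrt{\pi}\,i^{3/2})$, that is $\log \Cat_i = (2\log 2)\,i - \tfrac32\log i + O(1)$, summation yields
\[
\log a_r=\sum_{i=1}^{r}\log \Cat_i=(\log 2)\,r^2+O(r\log r),
\]
so the exponent grows quadratically in $r$, i.e.\ $\log a_r \sim (\log 2)\,r^2$.

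For step (ii), I would argue directly from the hypothetical recurrence. For all $r$ large enough that $p_d(r)\neq 0$ (finitely many exceptions, since $p_d\not\equiv 0$), one has $|a_{r+d}|\le \sum_{k=0}^{d-1}|p_k(r)/p_d(r)|\,|a_{r+k}|$, and each ratio $|p_k(r)/p_d(r)|$ is bounded by $Cr^{e}$ for suitable constants $C,e$. Setting $M_r=\max_{0\le k<d}|a_{r+k}|$, this gives $M_{r+1}\le(1+dCr^{e})M_r$, and iterating from a fixed $r_0$ produces $\log M_r\le e\log((r-1)!)+O(r)=O(r\log r)$; hence $\log|a_r|=O(r\log r)$ for any $P$-recursive sequence.

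Finally, step (iii): since $r^2/(r\log r)=r/\log r\to\infty$, the quadratic growth $\log a_r\sim(\log 2)\,r^2$ cannot satisfy $\log a_r=O(r\log r)$, so $(a_r)$ is not $P$-recursive and $\sum_r a_r x^r$ is not $D$-finite. The genuinely substantive ingredient is the growth bound of step (ii); the value of $a_r$ and its asymptotics follow routinely from Theorem~\ref{thm:unresA}, \eqref{eq:ZprodCat}, and Stirling's formula. The main point to be careful about is the bookkeeping in step (i): I would verify that the reindexing between the weight $\mu=\sum_i\binom{i+1}{2}\alpha_i$ and the juggling parameters in \eqref{eq:ZprodCat} is carried out correctly, so that the identity $a_r=\Cat_1\cdots\Cat_r$, and in particular the $\Theta(r^2)$ exponent, is exact.
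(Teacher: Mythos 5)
Your proof is correct, and its skeleton is the same as the paper's: identify $a_r=\Cat_1\Cat_2\cdots\Cat_r$ from Equation~\eqref{eq:ZprodCat} and rule out $D$-finiteness on growth grounds. Where you genuinely differ is the key analytic step. The paper's entire proof is two sentences: it notes that $a_r$ grows superexponentially (via $\Cat_r\sim 4^r/(r^{3/2}\sqrt{\pi})$) and outsources the incompatibility with $P$-recursiveness to Mezzarobba--Salvy~\cite{MS}. You instead prove the needed ceiling yourself: iterating the recurrence past the finitely many integer zeros of the leading coefficient gives $M_{r+1}\le (1+dCr^{e})M_r$, hence $\log|a_r|=O(r\log r)$ for every $P$-recursive sequence, which your estimate $\log a_r\sim(\log 2)\,r^2$ violates. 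This buys two things: self-containedness, and precision --- superexponential growth alone does not preclude $P$-recursiveness (e.g.\ $a_r=r!$ is $P$-recursive), so the paper's wording implicitly leans on the quantitative $O(r\log r)$ bound from \cite{MS}, which is exactly the statement you establish; your step (ii) argument is sound, including the treatment of the finitely many zeros of $p_d$ and the polynomial bound on the coefficient ratios. Your caution about the bookkeeping in step (i) is also warranted: applying Theorem~\ref{thm:unresA} to $\mu=\sum_{i=1}^{r}\binom{i+1}{2}\alpha_i$ produces $\js(\langle 1,\ldots,r\rangle,\langle\binom{r+1}{2}\rangle,r)$, which is Equation~\eqref{eq:ZprodCat} with $r$ replaced by $r+2$ and yields $\Cat_1\cdots\Cat_r$, whereas the expression $\js(\langle 1,\ldots,r-1\rangle,\langle\binom{r}{2}\rangle,r)$ printed in the statement carries an off-by-one in the initial state; under any of these readings $a_r$ is a product of consecutive Catalan numbers with $\log a_r=\Theta(r^2)$, so your conclusion is unaffected.
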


\begin{proof}
By Equation~\eqref{eq:ZprodCat} $a_r= \Cat_1\Cat_2\cdots \Cat_r$, which grows superexponentially since $\Cat_r \sim \frac{4^r}{r^{3/2}\sqrt{\pi}}$. Hence, the generating function $\sum_{r\geq 0} a_r x^r$ cannot be $D$-finite~\cite{MS}.  
\end{proof}

Thus, we pose the following.
\begin{question}
Find conditions on initial and terminal states, ${\bf a}$ and ${\bf b}$ respectively, so that the generating function of magic multiplex juggling sequences $a_r := \js({\bf a}, {\bf b}, r)$ is rational, algebraic,  $D$-finite, etc.   
\end{question}

\subsubsection{Positroids}
Juggling sequences appear in the context of positroids in \cite{KLS}. It can be seen that every juggling sequence $\JS(\bfv,\bfv,n,1)$, where $\bfv\in\{0,1\}^n$ is such that $\sum_{i=i}^n {v_i}=k$, corresponds to a positroid of rank $k$ seen as a bounded affine permutation (see \cite[Section 3]{KLS}).

\begin{question}
What further connections exist between positroids and juggling sequences?
\end{question}

\subsubsection{Weight multiplicities}
Kostant's weight multiplicity gives the multiplicity of the weight $\mu$ in the highest weight representation $L(\lambda)$ of a simple Lie algebra $\mathfrak{g}$ \cite{KMF}. This formula is given by 
\begin{align}
    m(\lambda,\mu)=\sum_{\sigma\in W}(-1)^{\ell(\sigma)}K(\sigma(\lambda+\rho)-\mu-\rho),\label{eq:KWMF}
\end{align}
where $W$ denotes the Weyl group of $\mathfrak{g}$, $\ell(\sigma)$ denotes the length of a Weyl group element, $K$ denotes Kostant's partition function, and $\rho=\frac{1}{2}\sum_{\a\in \Phi^+}\a$.

Much work has been done in giving closed formulas for Equation~\eqref{eq:KWMF}. This includes determining the support of the function when $\lambda$ is the highest root of a Lie algebra and $\mu$ is zero or a positive root \cite{CHI,PH,PHThesis,HIW}, determining solutions to associated $q$-analog problems \cite{HIO,HIS,HarrisLauber}, and providing visualizations for the support of \eqref{eq:KWMF} in low rank examples \cite{HLM,HLRRRKTDU}. In light of the main results in the current, we pose the following.

\begin{question}
Can one exploit the connection between juggling sequences and Kostant's partition function to provide new formulas for Kostant's weight multiplicity formula?
\end{question}

\section*{Acknowledgments}

A preliminary version of these results were presented in \cite{Simpson}.
This work was supported by the American Institute of Mathematics through their SQuaRE program.  We are very appreciative of their support and funding which made this research collaboration possible. We thank Rafael Gonz\'alez D'Le\'on and Martha Yip for fruitful discussions throughout the process. We also thank Igor Pak and Mark Wilson for their helpful suggestions in Section~\ref{sec:genfunctions} as well as the anonymous referees. We thank Laura Colmenarejo and Viviene Do for correcting the poset in Figure~\ref{fig:juggling poset}(c). C. Benedetti also thanks grant FAPA of the Faculty of Science at Universidad de los Andes. A. H. Morales received support from NSF Grant DMS-1855536.

\section*{Dictionary of Notation}

\begin{longtable}{lll}
$h$ &--& height of a juggling state\\
$m$ &--& hand capacity of a juggling sequence\\
$n$ &--& length of a juggling sequence\\
$p$ &--& multiset of positive roots\\
$r$ &--& rank of Lie algebra\\
$s_i$ &--& entry of juggling sequence vector\\
$A_r$ &--& Lie algebra $\mathfrak{sl}_{r+1}(\mathbb{C})$\\
$B_r$ &--& Lie algebra $\mathfrak{so}_{2r+1}(\mathbb{C})$\\
$C_r$ &--& Lie algebra $\mathfrak{sp}_{2r}(\mathbb{C})$\\
$D_r$ &--& Lie algebra $\mathfrak{so}_{2r}(\mathbb{C})$\\
$K(\mu)$ &--& number of partitions of $\mu$ using positive roots of type $A$\\
$K_\Lambda(\mu)$ &--& number of partitions of $\mu$ using roots from $\Lambda$\\
$P(\mu)$ &--& set of partitions of $\mu$ \\
$P_\Lambda(\mu)$ &--& set of partitions of $\mu$ using roots from $\Lambda$\\
$Q_\Lambda(\mu)$ &--& subset of $P_\Lambda(\mu)$ subject to a hand capacity constraint\\
$S$ &--& juggling sequence\\
$T_{i,j}$ &--& a throw at time $i$ to height $j$\\
$\T$ &--& a set of throws\\
$\T_r$ &--& set of all throws that land by time $r+1$\\
$\bfa$ &--& initial state of a juggling sequence\\
$\bfb$ &--& terminal state of a juggling sequence\\
$\bfs$ &--& juggling state\\
$\bft$ &--& reflected juggling state\\
$\alpha_i$ &--& simple roots of a Lie algebra\\
$\tilde\alpha$ &--& highest root of a Lie algebra\\
$\beta_i$ &--& a positive root\\
$\delta(S)$ &--& net change vector of $S$\\
$\e_i$ &--& standard basis vectors\\
$\mu$ &--& weight of a Lie algebra\\
$\Gamma$ &--& function between (multisets of) positive roots and (multisets of) throws\\
$\Delta$ &--& simple roots of a Lie algebra\\
$\Phi$ &--& root system of a Lie algebra\\
$\Phi^+$ &--& positive roots of a Lie algebra\\
$\Lambda$ &--& subset of $\Phi^+$\\
$\JS$ &--& set of juggling sequences\\
$\js$ &--& number of juggling sequences\\
$\JS_\T$ &--& set of juggling sequences using throws from $\T$\\
$\js_\T$ &--& number of juggling sequences using throws from $\T$\\
$\LJS$ &--& set of labeled juggling sequences\\
$\ljs$ &--& number of labeled juggling sequences\\
$\PJS$ &--& juggling poset\\
$\RJS$ &--& juggling polytope / set of real-valued juggling sequences\\
$(\cdot,\cdot,\cdot)$ &--& parentheses denote a juggling sequence\\
$\langle \cdot,\cdot,\cdot\rangle$ &--& angle brackets denote a juggling state\\
$[\cdot,\cdot,\cdot]$ &--& square brackets denote the components of a labeled juggling state\\
\end{longtable}

\begin{bibdiv}

\begin{biblist}
\bibliographystyle{abbrv}

\bib{AGHRS}{article}{
    AUTHOR = {Armstrong, D.},author={Garsia, A.},author={Haglund, J.},author={Rhoades, B.},author={Sagan, B.},
     TITLE = {Combinatorics of {T}esler matrices in the theory of parking
              functions and diagonal harmonics},
   JOURNAL = {J. Comb.},
  FJOURNAL = {Journal of Combinatorics},
    VOLUME = {3},
      YEAR = {2012},
    NUMBER = {3},
     PAGES = {451--494},
      ISSN = {2156-3527},
   MRCLASS = {05E05 (33D52)},
  MRNUMBER = {3029443},
       DOI = {10.4310/JOC.2012.v3.n3.a7},
       URL = {https://doi.org/10.4310/JOC.2012.v3.n3.a7},
}
\bib{ABCN}{article}{
    AUTHOR = {Ayyer, A.},AUTHOR = {Bouttier, J.},AUTHOR = {Corteel, S.},AUTHOR = {Nunzi, F.},
     TITLE = {Multivariate juggling probabilities},
 BOOKTITLE = {Proceedings of the 25th {I}nternational {C}onference on
              {P}robabilistic, {C}ombinatorial and {A}symptotic {M}ethods
              for the {A}nalysis of {A}lgorithms},
    SERIES = {Discrete Math. Theor. Comput. Sci. Proc., BA},
     PAGES = {1--12},
 PUBLISHER = {Assoc. Discrete Math. Theor. Comput. Sci., Nancy},
      YEAR = {2014},
}
\bib{BB}{article}{
    AUTHOR = {Baldoni, W.},author={ Vergne, M.},
     TITLE = {Kostant partitions functions and flow polytopes},
   JOURNAL = {Transform. Groups},
    VOLUME = {13},
      YEAR = {2008},
    NUMBER = {3-4},
     PAGES = {447--469},
      ISSN = {1083-4362},
       DOI = {10.1007/s00031-008-9019-8},
       URL = {https://doi.org/10.1007/s00031-008-9019-8},
}
\bib{BBCV}{article}{
  title={Volume computation for polytopes and partition functions for classical root systems},
  author={Baldoni, M. W.},
  author={Beck, M.},
  author={Cochet, C.},
  author={Vergne, M.},
  journal={Discrete Comput. Goem.},
  volume={35},
  number={4},
  pages={551--595},
  year={2006},
  note={Code available at \url{www.math.polytechnique.fr/cmat/vergne/}.}
}

\bib{CCD}{book}{
  title={Computing the continuous discretely},
  author={Beck, M.},
  author={Robins, S.},
  year={2007},
  publisher={Springer}
}

\bib{caracol}{article}{
    author={Benedetti, C.},
    author={Gonz\'alez D'Le\'on, R. S.},
    author={Hanusa, C. R. H.},
    author={Harris, P.},
    author={Khare, A.},
    author={Morales, A.},
    author={Yip, M.},
    title={A combinatorial model for computing volumes of flow polytopes},
      date={2019},
   journal={Trans. Amer. Math. Soc.},
    volume={372},
     pages={3369--3404},
     DOI = {https://doi.org/10.1090/tran/7743}
}

\bib{BEGW}{article}{
    AUTHOR = {Buhler, J.},AUTHOR = {Eisenbud, D.},AUTHOR = { Graham, R.},AUTHOR = {Wright,
              C.},
     TITLE = {Juggling drops and descents},
   JOURNAL = {Amer. Math. Monthly},
    VOLUME = {101},
      YEAR = {1994},
    NUMBER = {6},
     PAGES = {507--519},
      ISSN = {0002-9890},
       DOI = {10.2307/2975316},
       URL = {https://doi.org/10.2307/2975316},
}

\bib{Bprivate}{misc}{
    author = {Butler, S.}, 
    note = {Private conversation, June 2018}
    }
  \bib{EMJP}{article}{
    AUTHOR = {Butler, S.}, AUTHOR={Choi, J.}, AUTHOR={Kim, K.}, AUTHOR={Seo,
              K.},
     TITLE = {Enumerating multiplex juggling patterns},
   JOURNAL = {J. Integer Seq.},
    VOLUME = {22},
      YEAR = {2019},
    NUMBER = {1},
     PAGES = {Art. 19.1.7, 21},
}

\bib{BG}{article}{
    author={Butler, S.},
    author={Graham, R.},
    title ={Enumerating (Multiplex) Juggling Sequences},
    journal = {Ann. Comb.},
    volume = {13},
    year = {2010},
    number = {4},
    pages = {413--424}
    }

\bib{CRY}{article}{
  title={On the volume of a certain polytope},
  author={Chan, C.},
  author={Robbins, D.},
  author = {Yuen, D. S.},
  journal={Exp. Math.},
  volume={9},
  number={1},
  pages={91--99},
  year={2000}
}

\bib{CHI}{article}{
    author={Chang, K.},
    author={Harris, P. E.},
    author={Insko, E.},
    title={Kostant's Weight Multiplicity Formula and the Fibonacci and Lucas Numbers},
      date={2018},
    journal={To appear Journal of Combinatorics},
    volume={},
     pages={},
}
\bib{CG}{article}{
    AUTHOR = {Chung, F.}, Author={Graham, R.},
     TITLE = {Primitive juggling sequences},
   JOURNAL = {Amer. Math. Monthly},
    VOLUME = {115},
      YEAR = {2008},
    NUMBER = {3},
     PAGES = {185--194},
      ISSN = {0002-9890},
       DOI = {10.1080/00029890.2008.11920516},
       URL = {https://doi.org/10.1080/00029890.2008.11920516},
}

\bib{CKM1}{article}{
    AUTHOR = {Corteel, S.},author={Kim, J. S.},author={M\'{e}sz\'{a}ros, K.},
     TITLE = {Flow polytopes with {C}atalan volumes},
   JOURNAL = {C. R. Math. Acad. Sci. Paris},
    VOLUME = {355},
      YEAR = {2017},
    NUMBER = {3},
     PAGES = {248--259},
      ISSN = {1631-073X},
       DOI = {10.1016/j.crma.2017.01.007},
       URL = {https://doi.org/10.1016/j.crma.2017.01.007},
}

\bib{CKM2}{article}{ 
AUTHOR = {Corteel, S.},author={Kim, J. S.},author={M\'{e}sz\'{a}ros, K.},
  title={Volumes of Generalized Chan--Robbins--Yuen Polytopes},
  journal={Discrete Comput. Geom.},
  volume={2019},
  doi = {doi:10.1007/s00454-019-00066-1}
}
\bib{E}{article}{
    AUTHOR = {Ehrenborg, R.},
     TITLE = {Determinants involving {$q$}-{S}tirling numbers},
   JOURNAL = {Adv. in Appl. Math.},
    VOLUME = {31},
      YEAR = {2003},
    NUMBER = {4},
     PAGES = {630--642},
      ISSN = {0196-8858},
       DOI = {10.1016/S0196-8858(03)00029-0},
       URL = {https://doi.org/10.1016/S0196-8858(03)00029-0},
}

\bib{ER1994}{article}{
    AUTHOR = {Ehrenborg, R.} ,AUTHOR = {Readdy, M.},
     TITLE = {Juggling and applications to {$q$}-analogues},
 BOOKTITLE = {Proceedings of the 6th {C}onference on {F}ormal {P}ower
              {S}eries and {A}lgebraic {C}ombinatorics ({N}ew {B}runswick,
              {NJ}, 1994)},
   JOURNAL = {Discrete Math.},
    VOLUME = {157},
      YEAR = {1996},
    NUMBER = {1-3},
     PAGES = {107--125},
      ISSN = {0012-365X},
       DOI = {10.1016/S0012-365X(96)83010-X},
       URL = {https://doi.org/10.1016/S0012-365X(96)83010-X},
}

\bib{ELV}{article}{
    AUTHOR = {Engstr\"{o}m, A.}, Author={Leskel\"{a}, L.}, Author={Varpanen, H.},
     TITLE = {Geometric juggling with {$q$}-analogues},
   JOURNAL = {Discrete Math.},
    VOLUME = {338},
      YEAR = {2015},
    NUMBER = {7},
     PAGES = {1067--1074},
      ISSN = {0012-365X},
       DOI = {10.1016/j.disc.2015.02.004},
       URL = {https://doi.org/10.1016/j.disc.2015.02.004},
}
\bib{GW}{book}{
    author={Goodman, R.},
    author={Wallach, N. R.},
     title={Symmetry, Representations and Invariants},
publisher={Springer},
   address={New York},
      date={2009},
      ISBN={978-0-387-79851-6},
    review={\MR{2011a:20119}},
}

\bib{H}{article}{
    AUTHOR = {Haglund, J.},
     TITLE = {A polynomial expression for the {H}ilbert series of the
              quotient ring of diagonal coinvariants},
   JOURNAL = {Adv. Math.},
    VOLUME = {227},
      YEAR = {2011},
    NUMBER = {5},
     PAGES = {2092--2106},
      ISSN = {0001-8708},
       DOI = {10.1016/j.aim.2011.04.013},
       URL = {https://doi.org/10.1016/j.aim.2011.04.013},
}

\bib{PH}{article}{
    author={Harris, P. E.},
    title={On the adjoint representation of $\mathfrak {sl}_n$ and the Fibonacci numbers},
      date={2011},
   journal={C. R. Math. Acad. Sci. Paris},
    volume={349},
     pages={935-937},
    review={},
}

\bib{PHThesis}{thesis}{
    author={Harris, P. E.},
    title={Combinatorial problems related to Kostant's weight multiplicity formula},
    school={University of Wisconsin, Milwaukee},
    year={2012},
    address={},
}

\bib{HIO}{article}{
    author={Harris, P. E.},
    author={Insko, E.},
    author={Omar, M.},
    title={The $q$-analog of Kostant’s partition function and the highest root of the simple Lie algebras},
    journal={Australas. J. Combin.},
    volume={71},
    year={2018},
    number={I},
    pages={68-91},
}

\bib{HIS}{article}{
    author={Harris, P. E.},
    author={Insko, E.},
    author={Simpson, A.},
    title={Computing weight $q$-multiplicities for the representations of the simple Lie algebras},
    journal={A. AAECC},
    volume={},
    year={2017},
    pages={},
    review={},
    note={https://doi.org/10.1007/s00200-017-0346-7},
}

\bib{HIW}{article}{
    author={Harris, P. E.},
    author={Inkso, E.},
    author={Williams, L. K.},
    title={The adjoint representation of a classical Lie algebra and the support of Kostant's weight multiplicity formula},
    journal={J. Comb.},
    volume={7},
    year={2016},
    number={1},
    pages={75-116},
    review={},
}

\bib{HarrisLauber}{article}{
    author={Harris, P. E.},
    author={Lauber, E.},
    title={Weight $q$-multiplicities for representations of $\mathfrak{sp}_4(\mathbb{C})$},
    journal={J. Sib. Fed. Univ. Math. Phys.},
    fjournal={Journal of Siberian Federal University Mathematics \& Physics},
    volume={10},
    number={4},
    year={2017},
    pages={494--502},
    review={},
    note={},
}

\bib{HLM}{article}{
    author={Harris, P. E.},
    author={Lescinsky, H.},
    author={Mabie, G.},
    title={Lattice patterns for the support of Kostant's weight multiplicity formula on $\mathfrak{sl}_{3}(\mathbb{C})$},
    journal={Minnesota Journal of Undergraduate Mathematics.},
    volume={4},
    number={1},
    year={June 2018},
    pages={},
    review={},
    note={},
}

\bib{HLRRRKTDU}{article}{
title={Visualizing the support of Kostant's weight multiplicity formula for the rank two Lie algebras},
author={P. E. Harris}, 
author={M. Loving},
author={J. Ramirez},
author={J. Rennie},
author={G. Rojas Kirby}, 
author={E. Torres Davila},
author={F. O. Ulysse},
year={2019},
note={arXiv preprint arXiv:1908.08405.pdf}
  
}
 
\bib{Hille}{article}{
title={Quivers, cones and polytopes},
author={Hille, L.},
journal={Linear Algebra Appl.},
  volume={365},
  pages={215--237},
  year={2003},
  publisher={Elsevier}
}

\bib{JK}{misc}{
  title={Volumes of flow polytopes related to caracol graphs},
  author={Jang, J.},
  author={Kim, J. S.},
  note={arXiv preprint arXiv:1911.10703},
  year={2019}
}

\bib{KMS}{misc}{
  title={Counting integer points of flow polytopes},
  author={Kapoor, K.},
  author={M{\'e}sz{\'a}ros, K.},
  author={Setiabrata, L.},
  note={arXiv preprint arXiv:1906.05592},
  year={2019}
}

\bib{KLS}{article}{
    author={Knutson, A.},author={ Lam, T.},author={Speyer, D.},
    title={Positroid varieties: juggling and geometry},
    date={2013},
    journal={Compos. Math.},
    volume={149},
    pages={1710\ndash 1752},
}

\bib{KMF}{article}{
    author={Kostant, B.},
     title={A formula for the multiplicity of a weight},
      date={1958},
   journal={Proc. Nat. Acad. Sci. U.S.A.},
    volume={44},
     pages={588\ndash 589},
    review={\MR{20 \#5827}},
}

\bib{LV}{article}{
    AUTHOR = {Leskel\"{a}, L.},AUTHOR = {Varpanen, H.},
     TITLE = {Juggler's exclusion process},
   JOURNAL = {J. Appl. Probab.},
    VOLUME = {49},
      YEAR = {2012},
    NUMBER = {1},
     PAGES = {266--279},
      ISSN = {0021-9002},
       DOI = {10.1239/jap/1331216846},
       URL = {https://doi.org/10.1239/jap/1331216846},
}

\bib{lidskii}{article}{
    AUTHOR = {Lidski\u{\i}, B. V.},
     TITLE = {The {K}ostant function of the system of roots {$A_{n}$}},
   JOURNAL = {Funktsional. Anal. i Prilozhen.},
  FJOURNAL = {Akademiya Nauk SSSR. Funktsional\cprime ny\u{\i} Analiz i ego
              Prilozheniya},
    VOLUME = {18},
      YEAR = {1984},
    NUMBER = {1},
     PAGES = {76--77},
      ISSN = {0374-1990},
   MRCLASS = {17B10 (22E46)},
  MRNUMBER = {739099},
MRREVIEWER = {James E. Humphreys},
}

\bib{MM1}{article}{
    AUTHOR = {M\'{e}sz\'{a}ros, K.},author={ Morales, A. H.},
     TITLE = {Flow polytopes of signed graphs and the {K}ostant partition
              function},
   JOURNAL = {Int. Math. Res. Not. IMRN},
YEAR = {2015},
    NUMBER = {3},
     PAGES = {830--871},
      ISSN = {1073-7928},
       DOI = {10.1093/imrn/rnt212},
}

\bib{MMS}{article}{
  title={On Flow Polytopes, Order Polytopes, and Certain Faces of the Alternating Sign Matrix Polytope},
  author={M{\'e}sz{\'a}ros, K.}, author ={Morales, A. H.},
  author = {Striker, J.},
  journal={Discrete Comput. Geom.},
  volume={62},
  number={1},
  pages={128--163},
  year={2019},
  doi={10.1007/s00454-019-00073-2}
}

\bib{LMStD}{misc}{
  title={Gelfand-Tsetlin polytopes: a story of flow and order polytopes},
  author={Liu, R. I.},author={M{\'e}sz{\'a}ros, K.},author={St. Dizier, A.},
  note={arXiv preprint arXiv:1903.08275},
  year={2019}
}

\bib{MP}{article}{
    AUTHOR = {M\'{e}sz\'{a}ros, K.},
     TITLE = {Product formulas for volumes of flow polytopes},
   JOURNAL = {Proc. Amer. Math. Soc.},
  FJOURNAL = {Proceedings of the American Mathematical Society},
    VOLUME = {143},
      YEAR = {2015},
    NUMBER = {3},
     PAGES = {937--954},
      ISSN = {0002-9939},
   MRCLASS = {52B11 (05E10 52A38)}
}

\bib{MM2}{article}{
    AUTHOR = {K. M\'{e}sz\'{a}ros},author={ A. H. Morales},
     TITLE = {Volumes and {E}hrhart polynomials of flow polytopes},
   JOURNAL = {Math. Z.},
    VOLUME = {293},
      YEAR = {2019},
    NUMBER = {3-4},
     PAGES = {1369--1401},
      ISSN = {0025-5874},
       DOI = {10.1007/s00209-019-02283-z},
       URL = {https://doi.org/10.1007/s00209-019-02283-z},
}
\bib{TP}{article}{
    AUTHOR = {K. M\'{e}sz\'{a}ros},author={A. H. Morales},author={B. Rhoades},
     TITLE = {The polytope of {T}esler matrices},
   JOURNAL = {Selecta Math. (N.S.)},
    VOLUME = {23},
      YEAR = {2017},
    NUMBER = {1},
     PAGES = {425--454},
      ISSN = {1022-1824},
       DOI = {10.1007/s00029-016-0241-2},
       URL = {https://doi.org/10.1007/s00029-016-0241-2}
}

\bib{KSW}{article}{
    AUTHOR = {M\'{e}sz\'{a}ros, K.},author={Simpson, C.},author={Wellner, Z.},
     TITLE = {Flow polytopes of partitions},
   JOURNAL = {Electron. J. Combin.},
    VOLUME = {26},
      YEAR = {2019},
    NUMBER = {1},
     PAGES = {Paper 1.47, 12},
}
\bib{MS}{article}{
    AUTHOR = {M. Mezzarobba},
    AUTHOR = {B. Salvy},
     TITLE = {Effective bounds for {P}-recursive sequences},
   JOURNAL = {J. Symbolic Comput.},
  FJOURNAL = {Journal of Symbolic Computation},
    VOLUME = {45},
      YEAR = {2010},
    NUMBER = {10},
     PAGES = {1075--1096},
      ISSN = {0747-7171}
}
\bib{PolsterBook}{book}{
    AUTHOR = {Polster, B.},
     TITLE = {The mathematics of juggling},
 PUBLISHER = {Springer-Verlag, New York},
      YEAR = {2003},
     PAGES = {xviii+226},
      ISBN = {0-387-95513-5},
}

\bib{SB}{article}{
title={The Kostant partition function for simple Lie algebras},
author={J. R. Schmidt and A. M. Bincer},
journal={Journal of Mathematical Physics},
volume={25}, 
year={1984},
pages={2367-2373}
}

\bib{JO}{article}{
    AUTHOR = {J. O'Neill},
     TITLE = {On the poset and asymptotics of {T}esler matrices},
   JOURNAL = {Electron. J. Combin.},
  FJOURNAL = {Electronic Journal of Combinatorics},
    VOLUME = {25},
      YEAR = {2018},
    NUMBER = {2},
     PAGES = {Paper 2.4, 27},
   MRCLASS = {05A16 (05A05 06A07)},
  MRNUMBER = {3799422},
MRREVIEWER = {Sergi Elizalde},
}

\bib{Shannon}{article}{
    AUTHOR = {C. Shannon},
     TITLE = {Scientific Aspects of Juggling},
   JOURNAL = {},
      YEAR = {1980},
       Note = {Published in  (Wiley 1993), 850–864},
}
\bib{Simpson}{thesis}{
author={Simpson, A.},
    title={Kostant's Partition Function and Multiplex Juggling Sequences},
    school = {Senior Thesis, Department of Mathematics and Statistics, Williams College},
   year ={2019},
 note ={Available at\newline \href{https://unbound.williams.edu/theses/islandora/object/studenttheses\%3A1382/}{https://unbound.williams.edu/theses/islandora/object/studenttheses\%3A1382}, accessed 12-June-2019},
}

\bib{Stadler}{article}{
    AUTHOR = {Stadler, J. D.},
     TITLE = {Juggling and vector compositions},
   JOURNAL = {Discrete Math.},
    VOLUME = {258},
      YEAR = {2002},
    NUMBER = {1-3},
     PAGES = {179--191},
      ISSN = {0012-365X},
       DOI = {10.1016/S0012-365X(02)00269-8},
       URL = {https://doi.org/10.1016/S0012-365X(02)00269-8},
}

\bib{EC1}{book}{
AUTHOR = {Stanley, R. P.},
     TITLE = {Enumerative combinatorics. {V}olume 1},
    SERIES = {Cambridge Studies in Advanced Mathematics},
    VOLUME = {49},
   EDITION = {Second},
 PUBLISHER = {Cambridge University Press, Cambridge},
      YEAR = {2012},
     PAGES = {xiv+626},
      ISBN = {978-1-107-60262-5}
}

\bib{EC2}{book}{
    AUTHOR = {Stanley, R. P.},
     TITLE = {Enumerative combinatorics. {V}ol. 2},
    SERIES = {Cambridge Studies in Advanced Mathematics},
    VOLUME = {62},
      NOTE = {With a foreword by Gian-Carlo Rota and appendix 1 by Sergey
              Fomin},
 PUBLISHER = {Cambridge University Press, Cambridge},
      YEAR = {1999},
     PAGES = {xii+581},
      ISBN = {0-521-56069-1; 0-521-78987-7}
}

\bib{Varpanen12ashort}{misc}{
    author = {H. Varpanen},
    title = {A short publication history of juggling math},
    year = {2012}
}

\bib{MY}{misc}{
  title={A {F}uss-{C}atalan variation of the caracol flow polytope},
  author={Yip, M.},
  note={arXiv preprint arXiv:1910.10060},
  year={2019}
}

\bib{Z}{incollection}{
    AUTHOR = {Zeilberger, D.},
     TITLE = {Proof of a conjecture of {C}han, {R}obbins, and {Y}uen},
      NOTE = {Orthogonal polynomials: numerical and symbolic algorithms (Legan\'{e}s, 1998)},
   JOURNAL = {Electron. Trans. Numer. Anal.}
}
\end{biblist}
\end{bibdiv}

\addresseshere
\end{document}